\documentclass[reqno]{amsart}

\usepackage{geometry}
\usepackage{amsmath,amssymb,amsthm,amsfonts}
\usepackage{hyperref}
\usepackage{mathrsfs}
\usepackage{appendix}
\usepackage{graphicx}
\usepackage{setspace}

\allowdisplaybreaks[4]

\usepackage{color}

\newtheorem{lemma}{Lemma}[section]
\newtheorem{theorem}{Theorem}[section]
\newtheorem{definition}{Definition}[section]
\newtheorem{proposition}{Proposition}[section]
\newtheorem{remark}{Remark}[section]

\numberwithin{equation}{section}
\newcommand\bu{\mathbf{u}}

\newcommand{\beq}{\begin{equation}}
\newcommand{\eeq}{\end{equation}}
\newcommand{\ben}{\begin{eqnarray}}
\newcommand{\een}{\end{eqnarray}}
\newcommand{\beno}{\begin{eqnarray*}}
\newcommand{\eeno}{\end{eqnarray*}}

\begin{document}
\title[ Global strong solutions for 1D compressible NSCH equations with vacuum]{Global strong solutions for 1D compressible Navier-Stokes/Cahn-Hilliard equations with vacuum}
\thanks{$^*$Corresponding author}
\thanks{{\it Keywords}: Compressible Navier--Stokes; Cahn--Hilliard system;
initial vacuum; strong solutions.}
\thanks{{\it AMS Subject Classification}: 35A01, 35A02, 35Q35, 76N10}%
\author[Shijin Ding]{Shijin Ding}
\address[S. Ding]{School of Mathematical Sciences, South China Normal University,
Guangzhou, 510631, China}
\email{dingsj@scnu.edu.cn}
\author[Yinghua Li]{Yinghua Li}
\address[Y. Li]{School of Mathematical Sciences, South China Normal University,
Guangzhou, 510631, China}
\email{yinghua@scnu.edu.cn}
\author[Yuanxiang Yan]{Yuanxiang Yan*}
\address[Y. Yan]{School of Mathematics and Systems Science, Guangdong Polytechnic Normal University,
Guangzhou, 510655, China}
\email{yuanxiangyan@m.scnu.edu.cn}
\author[Haoran Zheng]{Haoran Zheng}
\address[H. Zheng]{School of Mathematical Sciences, South China Normal University,
Guangzhou, 510631, China}
\email{hrzheng@scnu.edu.cn}

\date{\today}

\begin{abstract}
In this paper, we study the initial-boundary value problem of the 1D compressible Navier--Stokes/Cahn--Hilliard system with vacuum. We establish the global existence and uniqueness of strong solutions to this initial-boundary value problem. No any initial compatibility conditions are required via time weighted techniques, which leads to  a loss of regularity near the initial time. Therefore, the uniqueness of solutions obtained in this paper is even more challenging. To address this issue, we establish refined growth estimates and singular-in-time weighted energy estimates that induce a Gronwall-type structure, which ultimately allows us to close the uniqueness proof in Eulerian coordinates without passing to Lagrangian coordinates.
\end{abstract}

\maketitle


\section{Introduction}

In this paper, we investigate the compressible Navier--Stokes/Cahn--Hilliard (NSCH) system, which describes a diffusive interface model for the two-phase flow of viscous fluids. The model considered here was first deduced by Lowengrub and Truskinovsky \cite{LT98}. It has been modified and studied by Abels and Feireisl \cite{AF08} in the following form
\begin{equation}\label{NSCH_1}
	\left\{
	\begin{aligned}
		& \partial_t \rho + \operatorname{div}(\rho \bu)=0,  \\
		& \rho \partial_t \bu + \rho \bu \cdot \nabla\bu - \operatorname{div}\mathbb{S} + \nabla p = -\operatorname{div}\left(\nabla \phi \otimes \nabla \phi - \frac{1}{2}|\nabla \phi|^2 \mathbb{I}\right),\\
		& \rho \partial_t \phi + \rho \bu \cdot \nabla \phi = \Delta \mu,\\
		& \rho \mu =  - \Delta \phi + \rho \frac{\partial f}{\partial \phi}.
	\end{aligned}
	\right.
\end{equation}
Here, $\rho$, $\bu$, $\phi$, and $\mu$ denote the total density, the mean velocity of the fluid mixture, the phase-field variable, and the chemical potential, respectively. The viscous stress tensor $\mathbb{S}$ satisfies
$$
\mathbb{S} =\lambda(\phi)\left(\nabla \mathbf{u}+\nabla^{\top} \mathbf{u}-\frac{2}{3} \operatorname{div}  \mathbf{u} \mathbb{I}\right)+\eta(\phi) \operatorname{div} \mathbf{u} \mathbb{I} .
$$
The functions $\lambda(\phi)>0$ and $\eta(\phi)\geq 0$ are the shear and bulk viscosities, respectively.
The free energy density $f$ takes the form
$$
f(\rho,\phi)=\frac{\rho^{\gamma-1}}{\gamma-1}+\Phi(\phi),
\qquad
\Phi(\phi)=\frac{1}{4}(\phi^{2} - 1)^{2},
$$
and is related to the pressure through the isentropic equation of state
$p=\rho^{2} \frac{\partial f}{\partial \rho} = \rho^{\gamma}$. Here $\Phi(\phi)$ is also known as Landau potential \cite{LL13}.

In the special case of single phase fluid, i.e., $\phi =1$ or $\phi=-1$. The system \eqref{NSCH_1} reduces to Navier--Stokes equations. Let us recall about the results about Navier--Stokes equations with initial density vacuum.

In the presence of vacuum, i.e., when the initial density vanishes on some region, there  has been  a number of works on  the compressible Navier–Stokes equations since Lions \cite{L98} established the global existence of weak solutions for the isentropic case with the pressure law $p=a\rho^{\gamma}$ where $a>0$ and the adiabatic exponent satisfies $\gamma \ge \frac{9}{5}$. This result was subsequently extended by Feireisl, Novotn\'y, and Petzeltov\'a \cite{FNP01} to the range $\gamma \ge \frac{3}{2}$, and further by Jiang and Zhang \cite{JS01, JS03} to the case $\gamma>1$ under the additional assumptions of spherical symmetry or axisymmetry. For more recent developments, we refer to Bresch and Jabin \cite{BJ18}, where more general stress tensors and pressure laws are allowed. In the context of the full compressible Navier–Stokes equations (including energy equation), Feireisl \cite{F04} proved global existence of weak solutions under certain structural assumptions on the viscous and heat-conductive coefficients as well as the equations of state. Nevertheless, the uniqueness of such weak solutions remains an open problem.

For the uniqueness in the case involving vacuum, Salvi and Straškraba \cite{SS93} first proved the local existence and uniqueness of strong solutions to the isentropic compressible Navier–Stokes equations with suitably regular initial data and the following compatibility condition
\begin{align}\label{CB}
-\mu \Delta u_{0}  - (\mu + \lambda) \nabla {\rm div} u_{0} - \nabla p(\rho_{0}) = \sqrt{\rho_{0}}g,
\end{align}
for some $ g \in  L^{2}$. Since the work \cite{SS93}, the compatibility condition and its necessary modifications are widely used, as the standard assumptions, in many paper concerning the studies of the existence and uniqueness of strong solutions with initial vacuum allowed, see \cite{CK03, CCK04, CK06}. Later, Huang, Li and Xin \cite{HLX12} established a global-in-time existence result under a small initial energy assumption, see \cite{GHW13, HL18, HW15, LX19, Z15} for some further developments in this direction. Different from the multi-dimensional case, in the one-dimensional case, the global well posedness of strong solutions can be established for arbitrary large initial data, see \cite{JLY14, L19, L20}.

In contrast to the well-developed theory for Navier--Stokes equations, available results for compressible NSCH systems are rather limited. A first compressible diffuse--interface model was proposed by Lowengrub and Truskinovsky \cite{LT98}; see also \cite{EBS25} for recent extensions to N-phase mixtures. A simplified variant was later introduced by Abels and Feireisl \cite{AF08}, which corresponds precisely to system \eqref{NSCH_1} considered in the present work.
More recently, Elbar and Poulain \cite{EP24} established the global-in-time existence of weak solutions to \eqref{NSCH_1} with an additional friction term under the condition $\gamma>6$, which was subsequently relaxed to $\gamma>\tfrac{3}{2}$ by Basari\'c and Giorgini \cite{BG25} for a Flory--Huggins (Boltzmann--Gibbs) type free energy. On the other hand, Kotschote and Zacher \cite{KZ15} proved the local-in-time existence and uniqueness of strong solutions to the model proposed in \cite{LT98}. Further developments include the existence of global weak solutions with dynamic boundary conditions \cite{CFMMPP19}, the low Mach number limit \cite{ALN24,LX25}, the analysis of stationary problems in \cite{LW20,LW22}.  Ding and Li \cite{D-L} established global classical solution in the one-dimensional case without vacuum.

For isentropic 1D compressible Navier-Stokes/Allen-Cahn (NSAC) system, Ding, Li and Luo \cite{DLL13} obtained  global classical solution without vacuum. Later, this result was extended by Chen and Guo \cite{C-G} to the vacuum case, under the following compatibility conditions
\begin{align}
\mu u_{0xx} - p(\rho_{0})_{x} - \frac{1}{2}(\phi_{0x}^{2})_{x} = \rho_{0} g_{1}, \label{C11}
\\
\mu_{0} = \rho_{0} g_{2}, \label{C22}
\end{align}
for $(g_{1}, g_{2xx}) \in H^{1} \times L^{2}$. Moreover, under the assumption of  compatibility conditions similar to  \eqref{C11} and \eqref{C22}, Chen and Zhu \cite{C-Z} further extended this result to the case where the viscosity is given  $\mu(\rho) = 1 + \rho^{\alpha}$ with $ 2 \le \alpha \le \gamma$. Meanwhile, Su \cite{S21} established the global existence and uniqueness of strong and classical solutions to 1D compressible NSAC system with density-dependent viscosity and obtained the large time behavior of the velocity.
In the three-dimensional setting, Li, Zheng, and Zhou \cite{LZZ26} established the local existence and uniqueness of strong solutions in the presence of vacuum, under the following compatibility condition on the initial phase-field variable:
\begin{align}
		\Delta \phi_0 & = \rho_0 h, \quad \text{in } \Omega, \\
		\partial_{n} \phi_0 & = 0, \quad \quad
		 \text{on } \partial\Omega,
\end{align}
for some $h\in L^2(\Omega)$.

It is worth pointing out that the compatibility condition \eqref{CB} or its natural amendments play a crucial role in the well‑posedness theories. Consequently, they have been accepted as standard assumptions for establishing the well‑posedness of the compressible Navier–Stokes equations and related models in the presence of vacuum. One should note that these conditions impose restrictive constraints on the initial data, both in the vacuum region and in a neighborhood of the vacuum--nonvacuum interface. Indeed, by the compatibility condition \eqref{CB}, the initial velocity $u_{0}$ is destined to obey
$$
-\mu \Delta u_{0}  - (\mu + \lambda) \nabla {\rm div} u_{0} = 0,
$$
in the vacuum region, which however seem not physically relevant.

Based on the above analysis, an alternative well‑posedness theory that avoids any initial compatibility conditions such as \eqref{CB} is  both mathematically and physically importance. The first contribution in this direction was made by Li \cite{L17} for the inhomogeneous incompressible Navier–Stokes equations, where local well‑posedness was successfully established without imposing any compatibility conditions on the initial data. Further progress aimed at reducing the regularity requirements on the initial density can be found in Danchin and Mucha \cite{DM19}. Subsequently, an analogous local well‑posedness theory free of initial compatibility conditions was independently developed for the isentropic compressible Navier–Stokes equations by Gong et al. \cite{GLLZ20} and Huang \cite{H21}. For the full compressible Navier–Stokes equations, the local well‑posedness theory was achieved by Lai, Xu and Zhang \cite{LXZ22},  where they removed the compatibility  condition of $u_{0}$ but still required compatibility condition of $\theta_{0}$.  Later, Li and Zheng \cite{LZ23} extended this result to the case without any compatibility conditions on the initial data. For the inhomogeneous incompressible NSAC system, the global existence of unique strong solutions to the 3D Cauchy problem and the initial boundary value problem is established by Li, Xie and Yan \cite{LXY25}. Here, the initial vacuum is allowed and no compatibility conditions are required for the initial data. However, for the compressible NSCH equations, to the best of our knowledge, the desired well-posedness theory without any compatibility conditions on the initial data has not been established.

The aim of this paper is to establish the desired global well‑posedness theory for 1D compressible NSCH system without any extra compatibility conditions beyond the essential smoothness conditions on the initial data. In one spatial dimension, the viscous stress reduces to
$\mathbb{S} = \nu_{\mathrm{eff}}(\phi) u_x$,
where $\nu_{\mathrm{eff}}(\phi)=\frac{4}{3}\lambda(\phi)+\eta(\phi)>0$.
For simplicity, we assume $\nu_{\mathrm{eff}}(\phi)\equiv1$.  Then the compressible NSCH system \eqref{NSCH_1} in one dimension reduces to the following form
\begin{align}\label{nsch-1d}
	\begin{cases}
		\rho_{t} + (\rho u)_{x} = 0, \\[2pt]
		\rho u_{t} + \rho u u_{x} + (\rho^{\gamma})_{x}
		= u_{xx} - \frac{1}{2}(\phi_{x}^{2})_{x}, \\[2pt]
		\rho \phi_{t} + \rho u \phi_{x} = \mu_{xx}, \\[2pt]
		\rho \mu = -\phi_{xx} + \rho(\phi^{3}-\phi),
	\end{cases}
\end{align}
for $(x,t)\in(0,1)\times(0,T)$.

The system \eqref{nsch-1d} is supplemented with the boundary conditions
\begin{align}\label{bc}
	(u,\phi_x,\mu_x)\big|_{x=0,1}=(0,0,0),
\end{align}
and the initial conditions
\begin{align}\label{ic}
	(\rho,\rho u,\rho\phi)\big|_{t=0}=(\rho_0,\rho_0u_0,\rho_0\phi_0).
\end{align}
We emphasize that the initial density $\rho_0 \ge 0$ is allowed to vanish on an open subset of $I$; in other words, the presence of an initial vacuum is permitted.

Before stating our main results, we first introduce some necessary notations used throughout this paper and state the definition of solutions to be established. For $1\le p \le \infty$, we denote
\begin{align*}
\begin{cases}
I=(0,1), ~Q_{T} = I \times [0,T] ~ {\rm for}~ T>0,
\\
L^{p} = L^{p}(I), ~ W^{k, p} = W^{k, p}(I), ~ H^{k} = W^{k, 2},
\\
\| (f_{1}, f_{2}, \cdots, f_{N}) \|_{X} = \sum \limits_{i=1}^{N}  \| f_{i} \|_{X}.
\end{cases}
\end{align*}
Without loss of generality, we assume throughout this paper that
\begin{align*}
\int_{I} \rho_{0} (x) {\rm d}x = 1.
\end{align*}

The strong solutions to be established in this paper are defined as follows.
\begin{definition}\label{Define}
Given positive time $T\in (0,\infty)$. Assume that the initial data satisfy
\begin{align*}
0 \le \rho_{0} \in H^{1}, \qquad u_{0} \in H_{0}^{1},\qquad \phi_{0} \in H^{1},
\end{align*}
and that $\rho_{0}$ is not identically zero. A triple $(\rho, u, \phi)$ is called a strong solution  to the problem \eqref{nsch-1d}--\eqref{ic} in $I\times (0,T)$, if it has the regularities
\begin{alignat}{2}
& 0 \le \rho \in L^{\infty}(0,T;H^{1}), &\qquad& \rho_{t} \in L^{\infty}(0,T;L^{2}),
 \nonumber \\
& \rho u  \in C([0,T];L^{2}),  &\qquad& u \in L^{\infty}(0,T;H_{0}^{1}) \cap L^{2}(0,T; H^{2}),
\nonumber\\
& \sqrt{\rho} u_{t} \in L^{2}(0,T;L^{2}),  &\qquad& t u \in L^{\infty}(0,T;H^{2}) \cap L^{2}(0,T; H^{3}),
\nonumber\\
& \sqrt{t} u_{t} \in  L^{\infty}(0,T; H^{1}), &\qquad& \phi \in L^{\infty}(0,T; H^{1}) \cap L^{2}(0,T; H^{2}),
\nonumber\\
& \rho \phi  \in C([0,T];L^{2}), &\qquad& \sqrt{t} \phi_{xx} \in L^{\infty}(0,T;H^{1}) ,
\nonumber\\
& \sqrt{t} \phi_{t} \in L^{\infty}(0,T;H^{1}),  &\qquad& t\phi_{xxt} \in L^{2}(0,T; L^{2}),
\nonumber\\
&\sqrt{t} \mu \in  L^{\infty}(0,T; H^{1}),  &\qquad& \mu \in  L^{2}(0,T; H^{1}),
\nonumber\\
& t\mu_{xx} \in L^{\infty}(0,T;L^{2}), &\qquad&  t \mu_{t} \in  L^{2}(0,T; H^{1}) .
\nonumber
\end{alignat}
satisfies equations \eqref{nsch-1d} a.e. in $I \times (0, T)$, and fulfills the initial condition \eqref{ic}.
\end{definition}

\begin{remark}
Thanks to  $\rho u,  \rho \phi\in C([0,T];L^{2})$,  the initial values of $\rho u$ and $\rho \phi$ are well-defined.
\end{remark}

The main result of this paper can be summarized as follows.
\begin{theorem}\label{TH}
Assume that the initial data satisfy
\begin{align*}
0 \le \rho_{0} \in H^{1}, \qquad u_{0} \in H_{0}^{1},\qquad \phi_{0} \in H^{1},
\end{align*}
and that $\rho_{0}$ is not identically zero. Then for any $T>0$, there exists a unique global strong solution $(\rho, u, \phi)$ to the problem \eqref{nsch-1d}--\eqref{ic} in $I\times (0,T)$.
\end{theorem}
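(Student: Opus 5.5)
We prove Theorem \ref{TH} in three stages: approximation, uniform a priori estimates, and passage to the limit, followed by a separate uniqueness argument in Eulerian coordinates.

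\emph{Approximation.} We regularize the initial density by $\rho_{0}^{\delta}=\rho_{0}+\delta$ and mollify $(u_{0},\phi_{0})$. For this non-vacuum data, the one-dimensional theory (in the spirit of Ding--Li \cite{D-L}) gives a global classical solution $(\rho^{\delta},u^{\delta},\phi^{\delta},\mu^{\delta})$. The task is then to prove bounds on $[0,T]$ that are independent of $\delta$ and of any compatibility quantity; the only allowed inputs are $\|\rho_{0}\|_{H^{1}}$, $\|u_{0}\|_{H^{1}}$, $\|\phi_{0}\|_{H^{1}}$ and $T$.

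\emph{A priori estimates.} These proceed in the following order.

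1. The basic energy identity. We multiply the momentum equation by $u$ and the $\phi$-equation by $\mu$, using $\rho\mu=-\phi_{xx}+\rho(\phi^{3}-\phi)$. This gives
\[
\frac{d}{dt}\int\Big(\tfrac12\rho u^{2}+\tfrac{\rho^{\gamma}}{\gamma-1}+\tfrac12\phi_{x}^{2}+\tfrac{\rho}{4}(\phi^{2}-1)^{2}\Big)+\int(u_{x}^{2}+\mu_{x}^{2})=0 .
\]
The capillary term $-\tfrac12(\phi_x^2)_x u$ cancels against $\rho u\phi_x\mu$, because $\rho u\phi_{x}\mu=-u\phi_{x}\phi_{xx}+\rho u(\Phi'(\phi))_{x}$ and the last term is absorbed by the continuity equation.

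2. Control of $\phi$ in $L^\infty$. Mass conservation $\int\rho\phi=\int\rho_0\phi_0$ together with $\|\phi_{x}\|_{L^{2}}$ controls $\|\phi\|_{L^{\infty}}$, since $\int\rho=1$.

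3. The upper bound on density. We use the effective viscous flux $F=u_{x}-\rho^{\gamma}-\tfrac12\phi_{x}^{2}$. In 1D, $F_x=\rho\dot u$, so $\rho$ solves an ODE along particle paths, $\frac{D}{Dt}\log\rho=-F-\rho^{\gamma}-\tfrac12\phi_x^2$. Integrating the momentum equation in $x$ expresses $\int_0^t F$ in terms of $\int\rho u$ and bounded quantities. This yields $\sup\rho\le C$, with no lower bound needed.

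4. $H^{1}$ estimates without weight. Testing the momentum equation by $u_t$ gives $\sqrt\rho u_t\in L^2L^2$ and $u\in L^\infty H^1$. For $\phi$, we test the $\phi$-equation by $\mu_t$, or equivalently differentiate the $\mu$-relation, to get $\mu\in L^2H^1$ and $\phi\in L^2H^2$. The term $\phi_{x}^{2}$ in the momentum equation is handled via $\|\phi_x\|_{L^\infty}^2\le C\|\phi_x\|\|\phi_{xx}\|$. From this we obtain $\rho_x\in L^\infty L^2$ by differentiating the continuity equation and using $F_x=\rho\dot u$.

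5. Time-weighted higher estimates. We apply $\partial_t$ to the momentum and $\phi$-equations and test by $t u_t$ and $t\phi_t$ (or by $t\mu_t$). This gives $\sqrt t u_t\in L^\infty H^1$ and $\sqrt t\phi_t\in L^\infty H^1$, and then by elliptic regularity $t u\in L^\infty H^2$, $\sqrt t\mu\in L^\infty H^1$, and the remaining quantities of Definition \ref{Define}. The weight $t$ removes the need for $\sqrt{\rho_0}u_t(0)\in L^2$, which is exactly where compatibility would otherwise enter. The initial-time boundary terms disappear because $t=0$.

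\emph{Passage to the limit.} Aubin--Lions compactness lets us pass $\delta\to0$. The time continuity of $\rho u$ and $\rho\phi$ in $C([0,T];L^2)$ comes from the $\delta$-uniform bounds on $\rho$, $u$, $\phi$ and on their time derivatives in weaker norms.

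\emph{Uniqueness: the main obstacle.} Let $(\bar\rho,\bar u,\bar\phi)$ be the difference of two solutions. We plan to estimate $\|\bar\rho\|_{L^2}$ (or the antiderivative $\int_0^x\bar\rho$) and $\|\sqrt\rho\bar u\|_{L^2}$, $\|\sqrt\rho\bar\phi\|_{L^2}$, together with $\|\bar\phi_x\|_{L^2}$. The continuity equation gives $\frac{d}{dt}\|\bar\rho\|^2\le C\|\bar u\|_{H^1}\|\bar\rho\|(\|\rho_x\|+\ldots)$. The difficulty is the coefficient $\|u_x\|_{L^\infty}$ and the terms like $\bar\rho\,\dot u$: since $\dot u$ is only bounded by $t^{-1/2}$ times an $L^2_t$ quantity, they are not integrable near $t=0$ in the usual way.

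Our plan is as follows. First, work with $G(t)=\int_0^x\bar\rho$, which satisfies $G_t+uG_x=-\bar u\rho_2$-type terms and avoids derivatives of $\bar\rho$. Second, prove the growth bound $\|G(t)\|\le C\int_0^t\|\bar u_x\|$, which is $\lesssim\sqrt t\,(\int_0^t\|\bar u_x\|^2)^{1/2}$. Third, prove singular-weighted estimates of the form $t\|\rho_t\dot u\|$, $\int_0^t s\|u_{xt}\|^2<\infty$. Pairing the $\sqrt t$ gain in $G$ against the $t^{-1/2}$ loss gives a differential inequality
\[
E'(t)\le C\,a(t)E(t)+C\,t^{-1}\!\int_0^t\|\bar u_x\|^2 ,
\]
with $a\in L^1$.

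Hardy's inequality $\int_0^T t^{-2}(\int_0^t f)^2\le 4\int_0^T f^2$, together with a Gronwall argument on the quantity $\sup_{s\le t}\big(E(s)+\int_0^s\|\bar u_x\|^2\big)$ over a short interval $[0,\tau]$, then forces $E\equiv0$. Once $t>0$ the weights are harmless, and standard Gronwall extends uniqueness to $[0,T]$. The Cahn--Hilliard coupling requires controlling $\bar\mu$ through $\rho\bar\mu=-\bar\phi_{xx}+\ldots+\bar\rho(\mu_2-\Phi'(\phi_2))$. The term $\bar\rho\mu_2$ is again singular, with $\sqrt t\mu\in L^\infty H^1$, and is treated with the same $G$-growth trick.
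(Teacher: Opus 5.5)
Your existence scheme is close to the paper's. Your Step 2, which uses conservation of $\int_I\rho\phi\,dx$ together with $\int_I\rho\,dx=1$, is even a cleaner route to $\|\phi\|_{L^\infty}\le C$. The uniqueness argument, however, has a genuine gap, exactly at the point the paper singles out as its main difficulty.

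\textbf{The initial value of $E$.} A Gronwall or short-interval argument started at $t=0$ can force $E\equiv 0$ only if $E(t)\to 0$ as $t\to 0^{+}$, and you never verify this. For the component $\|\bar\phi_x\|_{L^2}^2$ it is far from automatic:
\begin{itemize}
\item the data fix only $\rho\phi$ at $t=0$;
\item without compatibility conditions there is no continuity $\phi_x\in C([0,T];L^2)$, since near $t=0$ only $\sqrt t\mu$ and $t\phi_t$ are controlled;
\item $\phi(\cdot,t)$ need not converge to $\phi_0$ in $H^1$ at all. For instance, on a vacuum interval $\phi(\cdot,t)$ is affine for $t>0$ because $\phi_{xx}=\rho(\phi^3-\phi-\mu)$, while $\phi_0$ is arbitrary there.
\end{itemize}
The paper closes this with growth estimates:
\begin{itemize}
\item $\|\bar\rho\|_{L^2}\le Ct$;
\item testing the $\bar\phi$-equation by $\bar\phi$ gives $\|\sqrt{\rho_1}\bar\phi\|_{L^2}^2\le Ct$, after checking $\sqrt{\rho_1}\bar\phi\to 0$ via $\rho_1\bar\phi=(\rho_1\phi_1-\rho_2\phi_2)-\bar\rho\phi_2$ (and likewise $\sqrt{\rho_1}\bar u\to 0$);
\item testing $\rho_1\bar\mu=-\bar\phi_{xx}+\cdots$ by $\bar\mu$ gives $\|\sqrt{\rho_1}\bar\mu\|_{L^2}^2\le C\|\bar\mu_x\|_{L^2}+C\le Ct^{-1/2}$;
\item testing the same relation by $\bar\phi$ gives $\|\bar\phi_x\|_{L^2}^2\le \|\sqrt{\rho_1}\bar\phi\|_{L^2}\|\sqrt{\rho_1}\bar\mu\|_{L^2}+O(t^{1/2})\le Ct^{1/4}$, see \eqref{zz-phix}.
\end{itemize}
Some argument of this type is indispensable in your scheme. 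Treating $\bar\rho\mu_2$ by ``the same $G$-growth trick'' does not supply it.

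\textbf{Other steps that need repair.}

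\emph{The displayed differential inequality does not close.} In $E'\le Ca(t)E+Ct^{-1}\int_0^t\|\bar u_x\|_{L^2}^2$, after integration the last term puts the weight $\log(\tau/s)$ on $\|\bar u_x(s)\|_{L^2}^2$. That cannot be absorbed by the dissipation; Hardy's inequality controls $t^{-2}(\int_0^t f)^2$, not $t^{-1}\int_0^t f^2$. You need an integrable coefficient there, hence small on $[0,\tau]$, such as $t\|u_{2xt}\|_{L^2}^2$. The paper builds this in by carrying $\|\bar\rho\|_{L^2}^2/t$ in the energy. This yields the dissipation $\|\bar\rho\|_{L^2}^2/t^2$ and turns the $\bar\rho u_{2t}$ term into $(t\|u_{2t}\|_{L^2}^2)\,\|\bar\rho\|_{L^2}^2/t$. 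The term $\int_I\bar\rho(\mu_2-\phi_2^3+\phi_2)\bar\phi_t\,dx$, where $\bar\phi_t$ is uncontrolled, is handled by moving the time derivative onto $\mu_2$. That step requires $t\mu_{2t}\in L^2(0,T;L^2)$.

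\emph{Your weights for $\phi$ are one level too weak.} The requirement $t\mu_{2t}\in L^2(0,T;L^2)$ already shows this. Since $\phi_0\in H^1$ only, for a fourth-order equation, a single weight $t$ on the time-differentiated $\phi$-system produces $\int_0^T(\|\sqrt\rho\phi_t\|_{L^2}^2+\|\phi_{xt}\|_{L^2}^2)\,dt$ when the weight is differentiated, and this is not known to be finite. The paper uses weight $t$ in Lemmas \ref{L-6} and \ref{L-10} and weight $t^2$ in Lemma \ref{L-11}. What comes out is $t\phi_t\in L^\infty(0,T;H^1)$, as recorded in Proposition \ref{p-estimate}.

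\emph{The capillary term in your unweighted step.} The term $\int_I\phi_x\phi_{xx}u_t\,dx$ cannot be handled by $\|\phi_x\|_{L^\infty}^2\le C\|\phi_x\|_{L^2}\|\phi_{xx}\|_{L^2}$, because only $\sqrt\rho u_t$ is controlled. You must write $\phi_{xx}=\rho(\phi^3-\phi-\mu)$ to extract a factor $\rho$, and then use $\|\mu\|_{L^\infty}\le\|\mu_x\|_{L^2}+C$. An unweighted test by $\mu_t$ is not available either: it would need $\mu(\cdot,0)\in H^1$, which is a compatibility-type condition.
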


\begin{remark}
	A notable feature of the present work is that no initial compatibility conditions
	are imposed on the data. By means of singular-in-time weighted energy estimates,
	we establish the global existence and uniqueness of strong solutions with vacuum.
	This approach is partly inspired by the time-weighted energy method in~\cite{LZ23}.
\end{remark}

\begin{remark}
These assumptions on the initial value $\phi_{0}$ are weaker than those in \cite{C-G, C-Z, S21}. In fact, $\phi_{0} \in H^{1}_{0}\cap H^{2}$ was required in \cite{C-G, C-Z, S21}, where the phase field variable $\phi$ is subject to Dirichlet boundary condition.  In Theorem \ref{TH}, we replace it with the Neumann boundary condition, which is more physically meaningful.
\end{remark}


A fundamental difficulty remains largely open in the analysis of compressible NSCH systems, namely the presence of vacuum. From a mathematical point of view, vacuum regions cause degeneracy in the momentum equation, leading to a loss of coercivity and preventing the direct application of classical energy methods. Moreover, the strong coupling between the hydrodynamic variables and the phase-field equation introduces additional nonlinear effects, which significantly complicate the derivation of higher-order a priori estimates. These challenges are particularly severe when seeking strong solutions that allow initial vacuum, even in one spatial dimension.

Let us state the key ideas in our arguments.  It is noted that in the previous literature \cite{C-G, C-Z}, the compressible Navier-Stokes/Allen-Cahn system with initial density vacuum was considered, where the phase field variable $\phi$ is subject to Dirichlet boundary condition.  Such a boundary condition is adopted in order to overcome technical difficulties by applying the Poincar\'e inequality. Nevertheless, the Neumann boundary condition for $\phi$ is more physically meaningful. To address the challenges posed by the Neumann boundary condition, we establish the following inequality
\begin{align*}
\| \partial_{t}^{i} \phi \|_{L^{\infty}} \le C \| \sqrt{\rho} \partial_{t}^{i}\phi \|_{L^{2}}+ C \| \partial_{x}\partial_{t}^{i}\phi  \|_{L^{2}}, \quad i=0,1.
\end{align*}

Due to  the difficulties caused by the low regularities and lack of compatibility conditions on the initial data, leading to weaker regularities of the solutions, the uniqueness of solutions obtained in the current paper  is even more challenging. Our strategies of proving the uniqueness are illustrated as follows. Let $(\rho_{1}, u_{1}, \phi_{1})$ and $(\rho_{2}, u_{2}, \phi_{2})$ be two solutions
with the same initial data and denote by $(\rho, u, \phi)$ their subtraction. Then, we have singular-in-time weighted energy estimates of the form
\begin{align}\label{I-w}
&   \mathcal{A}(t) + \int_{0}^{t} \left( \frac{\| \rho \|_{L^{2}}^{2}}{t^{2}} +  \| u_{x}\|_{L^{2}}^{2} + \| \mu_{x}\|_{L^{2}}^{2} \right) (s) {\rm d}s
 \le C \int_{0}^{t}  \mathcal{B}(s)  \mathcal{A}(s) {\rm d}s,
\end{align}
where $ \mathcal{B}(s) \in L^{1}(0,t) $ and
\begin{align*}
\mathcal{A}(t) := \frac{\| \rho \|_{L^{2}}^{2}}{t} +   \| \sqrt{\rho_{1}} u \|_{L^{2}}^{2} + \| \sqrt{\rho_{1}} \phi\|_{L^{2}}^{2} + \| \phi_{x}\|_{L^{2}}^{2}.
\end{align*}
Note that \eqref{I-w} meets the Gronwall type structure. It remains to guarantee that the quantity with singular weights $\mathcal{A}(t)$ tends to zero when approaching the initial time.  Thus, one needs to show that the initial values of $\sqrt{\rho_{1}} u$ and  $\sqrt{\rho_{1}} \phi$ are identically zero. However, it is a subtle issue to verify this in the Euler coordinates, as the initial condition is $(\rho_{1} u_{1}, \rho_{1} \phi_{1}) \big|_{t=0} = (\rho_{2} u_{2}, \rho_{2} \phi_{2}) \big|_{t=0}$. Because of the technical challenges noted above, Li-Zheng \cite{LZ23} established the uniqueness initially in Lagrangian coordinates and later convert it back to Eulerian coordinates. Comparing with \cite{LZ23}, we establish the uniqueness in Eulerian coordinates rather than in Lagrangian coordinates. Besides, we need to additionally show that the higher-order term
\begin{align}\label{L-key-e}
\lim \limits_{t \to 0}\| \phi_{x}\|_{L^{2}}^{2}(t) = 0.
\end{align}
Firstly, we can find that the continuity $\phi_{x} \in C([0,T];L^{2})$ is not available, since the low regularities of solutions,  lack of compatibility conditions on the initial data, and the initial data $\rho \phi \big|_{t=0} = \rho_{0} \phi_{0}$. To establish \eqref{L-key-e}, we consider the growth estimate of $ \| \phi_{x}\|_{L^{2}}^{2}(t) $  and have the following observation
\begin{align}
\| \phi_{x}\|_{L^{2}}^{2}(t)  \le \| \sqrt{\rho_{1}} \phi \|_{L^{2}}(t) \| \sqrt{\rho_{1}} \mu \|_{L^{2}}(t) + \mathcal{O}(t^{\frac{1}{2}}),
\end{align}
Consequently, we proceed to establish the growth estimate $ \| \sqrt{\rho_{1}} \phi \|_{L^{2}}(t) \le Ct^{\frac{1}{2}} $. Moreover, we further show that
\begin{align}
t^{\frac{1}{4}}\| \sqrt{\rho_{1}} \mu \|_{L^{2}}(t) \le  t^{\frac{1}{4}}\|  \mu_{x} \|_{L^{2}}(t)  + \mathcal{O}(t^{\frac{1}{4}})  \le C.
\end{align}
From the above, we finally arrive at
\begin{align}
\| \phi_{x}\|_{L^{2}}^{2}(t) \le Ct^{\frac{1}{4}},
\end{align}
which directly implies \eqref{L-key-e}. For more details, see section \ref{sec3} (Step 2. Growth estimate).

The remainder of this paper is organized as follows.
Section~\ref{sec2} is devoted to the derivation of a priori estimates.
On this basis, the existence  and uniqueness of Theorem \ref{TH} are established
in Section~\ref{sec3}.

Throughout this paper, we use $C$, which may vary from place to place,  to denote a generic constant that may depend on $T$ but not on the lower bound of the density, unless we clearly specify.

\section{A priori estimates} \label{sec2}
In this section, we derive a priori estimates for smooth solutions
$(\rho,u,\phi)$ to system \eqref{nsch-1d}--\eqref{ic} on a finite time interval $(0,T)$.
The main goal is to obtain uniform bounds for the basic energy quantities and
higher-order derivatives of the solution in the presence of vacuum.

The estimates are obtained without imposing any initial compatibility conditions
and rely on suitable time-weighted energy inequalities to handle the degeneracy
induced by vanishing density. All the a priori bounds established in this section
are summarized in Proposition~\ref{p-estimate}, which serves as the foundation
for the proof of global existence and uniqueness of strong solutions in
Section~\ref{sec3}.

We begin by collecting several fundamental a priori estimates for smooth
solutions $(\rho,u,\phi)$ to system \eqref{nsch-1d}--\eqref{ic} on a finite time
interval $(0,T)$. These estimates are mostly available in the existing literature
and are recalled here for completeness, as they constitute the analytical
foundation of our subsequent arguments.
\begin{lemma}\label{L-0} {\rm (cf.\ Lemma 3.1 in \cite{D-L})}
Let $(\rho, u, \phi)$ be the smooth solutions to \eqref{nsch-1d}--\eqref{ic}, then it holds that
\begin{align}\label{0}
\mathcal{E}(t) + \int_{0}^{t} \left( \| u_{x} \|_{L^{2}}^{2} + \| \mu_{x}\|_{L^{2}}^{2} \right) {\rm d}s = \mathcal{E}(0),
\end{align}
where
\begin{align*}
\mathcal{E}(t) := \int_{I} \left( \frac{\rho u^{2}}{2} + \frac{\rho^{\gamma}}{\gamma-1} + \frac{\rho(\phi^{2}-1)^{2}}{4} + \frac{\phi^{2}_{x}}{2}  \right) (x, t)  {\rm d}x.
\end{align*}
\end{lemma}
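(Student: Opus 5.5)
We derive the energy identity \eqref{0} by testing each equation of \eqref{nsch-1d} against the natural multiplier and integrating over $I$, using the boundary conditions \eqref{bc} and the smoothness of the solution. The plan is to obtain three separate identities (mechanical, pressure, phase-field) and then observe that the coupling terms cancel exactly.

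\emph{Momentum.} We multiply the momentum equation by $u$ and integrate. Using the continuity equation, the inertial part gives
$$\int_I(\rho u_t+\rho u u_x)u\,{\rm d}x=\frac{d}{dt}\int_I\frac{\rho u^2}{2}\,{\rm d}x.$$
Since $u|_{x=0,1}=0$, the viscous term gives $-\|u_x\|_{L^2}^2$. For the pressure, we write $\int_I(\rho^\gamma)_x u\,{\rm d}x=-\int_I\rho^\gamma u_x\,{\rm d}x$. The renormalized continuity equation $(\rho^\gamma)_t+(\rho^\gamma u)_x+(\gamma-1)\rho^\gamma u_x=0$ then yields
$$-\int_I\rho^\gamma u_x\,{\rm d}x=\frac{d}{dt}\int_I\frac{\rho^\gamma}{\gamma-1}\,{\rm d}x.$$
The capillary term becomes $-\frac12\int_I(\phi_x^2)_x u\,{\rm d}x=\frac12\int_I\phi_x^2u_x\,{\rm d}x$.

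\emph{Cahn--Hilliard part.} We multiply the third equation by $\mu$ and integrate. Since $\mu_x|_{x=0,1}=0$, the right side gives $-\|\mu_x\|_{L^2}^2$. On the left we substitute $\rho\mu=-\phi_{xx}+\rho(\phi^3-\phi)$, which gives
$$\int_I(\phi_t+u\phi_x)\bigl(-\phi_{xx}+\rho(\phi^3-\phi)\bigr)\,{\rm d}x.$$
Using $\phi_x|_{x=0,1}=0$, we have $-\int_I\phi_t\phi_{xx}\,{\rm d}x=\frac{d}{dt}\int_I\frac{\phi_x^2}{2}\,{\rm d}x$. We also have $-\int_I u\phi_x\phi_{xx}\,{\rm d}x=\frac12\int_I u_x\phi_x^2\,{\rm d}x$ after integrating by parts with $u=0$ on the boundary. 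The potential terms combine as
$$\int_I\rho(\phi_t+u\phi_x)\Phi'(\phi)\,{\rm d}x=\int_I\rho\,(\partial_t+u\partial_x)\Phi(\phi)\,{\rm d}x=\frac{d}{dt}\int_I\rho\,\Phi(\phi)\,{\rm d}x,$$
where the last step again uses the continuity equation, and $\Phi(\phi)=\frac14(\phi^2-1)^2$.

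\emph{Cancellation.} In the momentum identity the capillary term appears as $-\frac12\int_I\phi_x^2u_x\,{\rm d}x$ once it is moved to the left-hand side. In the Cahn--Hilliard identity the convective term contributes $+\frac12\int_I\phi_x^2u_x\,{\rm d}x$. Adding the two identities, these terms cancel, and we obtain
$$\frac{d}{dt}\mathcal{E}(t)+\|u_x\|_{L^2}^2+\|\mu_x\|_{L^2}^2=0.$$
Integrating in time gives \eqref{0}.

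The only delicate point is the sign bookkeeping in this cancellation of the Korteweg and convective terms, together with checking that every boundary term vanishes under \eqref{bc}. Since the solution is assumed smooth, no regularity issues arise.
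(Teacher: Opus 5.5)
Your argument is correct and is the standard energy computation behind this lemma, which the paper does not reprove but takes from Lemma 3.1 of Ding--Li. That computation tests the momentum equation by $u$ and the phase-field equation by $\mu$, and uses the continuity equation for the kinetic, pressure and $\rho\Phi(\phi)$ terms. The capillary contribution $\frac12\int_I \phi_x^2 u_x\,{\rm d}x$ then cancels between the two identities. Your sign bookkeeping is correct, and every boundary term vanishes under \eqref{bc} as you indicate.
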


%
\begin{lemma}{\rm (cf.\ Lemma 3.2 in \cite{D-L})}
Let $(\rho, u, \phi)$ be the smooth solutions to \eqref{nsch-1d}--\eqref{ic}, then it holds that
\begin{align} \label{t-phi-2}
\sup\limits_{0\le t \le T} \int_{I} \rho \phi^{2}  {\rm d}x + \int_{0}^{T} \int_{I} \left( \phi^{2} \phi_{x}^{2} + \frac{1}{\rho} \phi_{xx}^{2} \right) {\rm d}x {\rm d}t \le C(T).
\end{align}
\end{lemma}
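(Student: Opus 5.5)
Multiplying the phase-field equation $\eqref{nsch-1d}_3$ by $\phi$ and integrating over $I$ gives, by the continuity equation and $\mu_x|_{x=0,1}=0$,
\begin{align*}
\frac12\frac{\rm d}{{\rm d}t}\int_I \rho\phi^2\,{\rm d}x = \int_I \mu_{xx}\phi\,{\rm d}x = -\int_I \mu_x\phi_x\,{\rm d}x .
\end{align*}
I will not estimate $\mu_x\phi_x$ directly. Instead I will rewrite $\int_I\mu_{xx}\phi\,{\rm d}x$ through the constitutive relation $\eqref{nsch-1d}_4$. Dividing by $\rho$ is not allowed in vacuum, so I will use $\phi_{xx}=\rho(\phi^3-\phi)-\rho\mu$ differentiated twice. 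A cleaner choice is to test the Cahn--Hilliard equation against $-\phi_{xx}$, which is exactly $\rho\mu-\rho(\phi^3-\phi)$. This yields the identity
\begin{align*}
\frac12\frac{\rm d}{{\rm d}t}\|\phi_x\|_{L^2}^2+\ldots,
\end{align*}
which is just the energy identity of Lemma \ref{L-0}. So the correct route is to test with $\phi$ and combine with the structure of $\mu$: $\int\mu_{xx}\phi=\int\mu\phi_{xx}=\int \rho\mu(\phi^3-\phi)-\rho\mu^2$, using $\phi_x=\mu_x=0$ on the boundary. In this form $\rho\mu^2$ appears with a good sign, but a $1/\rho$ weight is still needed to produce the dissipation $\int\phi_{xx}^2/\rho$ claimed in \eqref{t-phi-2}.

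So the plan is as follows. I write $\rho\mu=-\phi_{xx}+\rho(\phi^3-\phi)$ and, on the set $\{\rho>0\}$ (for smooth solutions I first assume $\rho>0$ and take limits afterwards), set $\mu=-\phi_{xx}/\rho+\phi^3-\phi$. Then
\begin{align*}
\int_I\rho\mu^2\,{\rm d}x=\int_I\frac{\phi_{xx}^2}{\rho}\,{\rm d}x-2\int_I\phi_{xx}(\phi^3-\phi)\,{\rm d}x+\int_I\rho(\phi^3-\phi)^2\,{\rm d}x,
\end{align*}
and the middle term equals $2\int_I(3\phi^2-1)\phi_x^2\,{\rm d}x$ after integrating by parts with $\phi_x|_{\partial I}=0$. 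Combining this with $\int\mu\phi_{xx}$ gives, after expanding,
\begin{align*}
\frac12\frac{\rm d}{{\rm d}t}\int_I\rho\phi^2\,{\rm d}x+\int_I\frac{\phi_{xx}^2}{\rho}\,{\rm d}x+3\int_I\phi^2\phi_x^2\,{\rm d}x=\int_I\phi_x^2\,{\rm d}x .
\end{align*}
More directly, $\int\mu\phi_{xx}=-\int\phi_{xx}^2/\rho+\int(\phi^3-\phi)\phi_{xx}$, and the last term equals $-\int(3\phi^2-1)\phi_x^2$. This is the key identity.

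The right-hand side is bounded by $2\mathcal{E}(0)$ thanks to Lemma \ref{L-0}. Integrating over $(0,T)$ then gives \eqref{t-phi-2} with $C(T)=\|\sqrt{\rho_0}\phi_0\|_{L^2}^2+CT\mathcal{E}(0)$.

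The main obstacle is the justification in the presence of vacuum, because $\phi_{xx}^2/\rho$ is meaningless where $\rho=0$. I will handle it as follows. For the approximate smooth solutions the initial density is $\rho_0+\delta>0$, so $\rho>0$ persists and the computation is legitimate, with constants independent of $\delta$. Equivalently, the quantity $\phi_{xx}/\rho$ is interpreted as $\phi^3-\phi-\mu$, which is well defined. The bound then passes to the limit by lower semicontinuity.
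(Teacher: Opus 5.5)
Your proof is correct and follows the standard argument behind the cited Lemma 3.2 of Ding--Li, which the paper invokes without reproducing. You test \eqref{nsch-1d}$_{3}$ by $\phi$, substitute $\mu=-\phi_{xx}/\rho+\phi^{3}-\phi$, and integrate by parts using $\phi_x=\mu_x=0$ on $\partial I$. This gives $\frac12\frac{d}{dt}\int_I\rho\phi^2\,dx+\int_I\phi_{xx}^2/\rho\,dx+3\int_I\phi^2\phi_x^2\,dx=\int_I\phi_x^2\,dx$, whose right-hand side is controlled by Lemma \ref{L-0}. Your handling of $1/\rho$ through positive-density smooth solutions also matches how Section 2 uses the estimate. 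The exploratory detours (testing with $-\phi_{xx}$, expanding $\int_I\rho\mu^2\,dx$) add nothing and can be dropped from the write-up.
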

\begin{remark}
	Observing that $\| \phi\|_{L^{\infty}(Q_{T})}$ can not be controlled by $\| \phi_{x}\|_{L^{\infty}(0,T; L^{2})}$, since $\phi$ satisfies Neumann boundary value condition. Consequently, an additional estimate is required to bound the concentration difference $\phi$.
\end{remark}
\begin{lemma} {\rm (cf.\ Lemma 2.2 in \cite{C-G} or Lemma 3.4 in \cite{D-L})}
Let $(\rho, u, \phi)$ be the smooth solutions to \eqref{nsch-1d}--\eqref{ic}, then it holds that
\begin{align} \label{phi-b}
 \| \phi \|_{L^{\infty}(Q_{T}) }\le C(T).
\end{align}
\end{lemma}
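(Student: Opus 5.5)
The plan is to control $\phi$ pointwise by splitting it into its oscillation and a weighted mean against the density. Lemma \ref{L-0} already gives $\sup_t\|\phi_x\|_{L^2}\le C$, so the missing ingredient is a bound on some average of $\phi$. Since $\int_I\rho\,dx=\int_I\rho_0\,dx=1$ for all $t$ by mass conservation and the boundary condition $u=0$, the natural average is $\int_I\rho\phi\,dx$. By \eqref{t-phi-2} and Cauchy--Schwarz,
\begin{align*}
\Big|\int_I\rho\phi\,dx\Big|\le \Big(\int_I\rho\,dx\Big)^{1/2}\Big(\int_I\rho\phi^2\,dx\Big)^{1/2}\le C(T).
\end{align*}

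For each $x\in I$ and $y\in I$ we have $\phi(x,t)=\phi(y,t)+\int_y^x\phi_x\,dz$, hence $|\phi(x,t)-\phi(y,t)|\le \|\phi_x(t)\|_{L^1}\le\|\phi_x(t)\|_{L^2}$. Multiplying the identity by $\rho(y,t)$ and integrating in $y$ (using $\int\rho\,dy=1$) gives
\begin{align*}
|\phi(x,t)|\le \Big|\int_I\rho\phi\,dy\Big|+\|\phi_x(t)\|_{L^2}\le \|\sqrt\rho\phi(t)\|_{L^2}+\|\phi_x(t)\|_{L^2}.
\end{align*}
This is exactly the inequality advertised in the introduction with $i=0$. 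Taking the supremum over $t\in[0,T]$ and invoking \eqref{0} and \eqref{t-phi-2} yields \eqref{phi-b}. Note that no lower bound on $\rho$ enters: only total mass $1$ and nonnegativity are used, so vacuum causes no trouble.

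The main obstacle is not this lemma itself but the input \eqref{t-phi-2}, i.e. the uniform bound on $\int\rho\phi^2$, which we may assume. Were it needed, one would test the third equation of \eqref{nsch-1d} with $\phi$. This uses the continuity equation to write $\frac{1}{2}\frac{d}{dt}\int\rho\phi^2=\int\mu_{xx}\phi=-\int\mu_x\phi_x$, and the last term is bounded by $\|\mu_x\|_{L^2}\|\phi_x\|_{L^2}$, which is integrable in time by \eqref{0}. For the present statement, the only care needed is to justify the mass normalization and the pointwise Newton--Leibniz identity for smooth solutions, both of which are immediate.
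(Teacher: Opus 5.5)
Your argument is correct and is essentially the one the paper relies on. The paper only cites earlier work here, but the estimate it intends is exactly the $i=0$ case of the inequality $\|\phi\|_{L^{\infty}}\le C\|\sqrt{\rho}\,\phi\|_{L^{2}}+C\|\phi_{x}\|_{L^{2}}$ stated in the introduction: this is proved by the same density-weighted averaging as in \eqref{mu00} and closed with Lemma \ref{L-0} and the preceding bound on $\sup_{t}\int_{I}\rho\phi^{2}\,dx$. Your side remark is also correct: that last bound does follow from testing the $\phi$-equation by $\phi$ and using $\int_{0}^{T}\|\mu_{x}\|_{L^{2}}\|\phi_{x}\|_{L^{2}}\,dt\le C\sqrt{T}$.
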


\begin{lemma}{\rm (cf.\ Lemma 2.3 in \cite{C-G})}
Let $(\rho, u, \phi)$ be the smooth solutions to \eqref{nsch-1d}--\eqref{ic}, then it holds that
\begin{align} \label{rho-b}
 \| \rho \|_{L^{\infty}( Q_{T}) }\le C(T).
\end{align}
\end{lemma}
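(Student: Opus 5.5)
We prove the upper bound \eqref{rho-b} for the density by the standard one-dimensional argument based on the effective viscous flux. The idea is to rewrite the momentum equation so that $u_x$ appears as a potential difference. We work along particle paths, using the energy identity \eqref{0} and the bound \eqref{phi-b}.

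\textbf{The effective flux.} Set
\begin{align*}
F := u_x - \rho^{\gamma} - \tfrac12\phi_x^2 .
\end{align*}
The momentum equation then reads $(\rho u)_t + (\rho u^2)_x = F_x$. Define
\begin{align*}
\xi(x,t) := \int_0^x \rho u \,{\rm d}y .
\end{align*}
By the continuity equation and $u|_{x=0}=0$, we have $\xi_t = -\rho u^2 + F - F(0,t)$. To avoid the boundary term, we integrate $F_x=(\rho u)_t+(\rho u^2)_x$ in $x$ and recover $F(0,t)$ by averaging over $I$. Since $\int_I u_x\,{\rm d}x = 0$,
\begin{align*}
F(x,t) = \int_I F\,{\rm d}y + \partial_t\Big(\xi - \int_I \xi\,{\rm d}y\Big) + \rho u^2 - \int_I \rho u^2\,{\rm d}y ,
\end{align*}
where $\int_I F\,{\rm d}y = -\int_I \big(\rho^{\gamma}+\tfrac12\phi_x^2\big)\,{\rm d}y$. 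The key observations are the following.
\begin{itemize}
\item $\int_I F\,{\rm d}y$ is uniformly bounded by \eqref{0}, because $\int_I\rho^{\gamma}$ and $\int_I\phi_x^2$ are bounded by $\mathcal{E}(0)$.
\item $|\xi|\le \|\rho\|_{L^1}^{1/2}\|\sqrt\rho u\|_{L^2}\le C$, again by mass conservation and \eqref{0}.
\end{itemize}

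\textbf{Along particle paths.} Let $X(t)$ solve $\dot X = u(X,t)$. Along $X$ the continuity equation gives
\begin{align*}
\frac{\rm d}{{\rm d}t}\log\rho = -u_x = -F - \rho^{\gamma} - \tfrac12\phi_x^2 .
\end{align*}
We insert the formula for $F$ and set $\Xi(t):=\xi(X,t)-\int_I\xi$. The term $\partial_t\xi$ along the path equals $\frac{\rm d}{{\rm d}t}\Xi(X(t),t) - u\,\xi_x = \frac{\rm d}{{\rm d}t}\Xi - \rho u^2$, so the $\rho u^2$ contributions cancel. We obtain
\begin{align*}
\frac{\rm d}{{\rm d}t}\big(\log\rho - \Xi\big) = -\rho^{\gamma} - \tfrac12\phi_x^2 + \int_I\big(\rho^{\gamma}+\tfrac12\phi_x^2\big) + \int_I\rho u^2 \le C .
\end{align*}
The nonpositive terms $-\rho^{\gamma}-\tfrac12\phi_x^2$ can simply be dropped. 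Integrating in time and using $|\Xi|\le C$ gives
\begin{align*}
\rho(X(t),t) \le \|\rho_0\|_{L^\infty}\, e^{C+CT}.
\end{align*}
Since $\rho_0\in H^1\subset L^\infty$, this proves \eqref{rho-b}.

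\textbf{Vacuum and the main obstacle.} Points where $\rho_0=0$ stay vacuum along their trajectories. For a rigorous treatment we argue for smooth approximate solutions with $\rho_0+\delta$, where $\log\rho$ is well defined, and observe that the constants do not depend on $\delta$. The main obstacle is exactly the handling of the capillary term $\tfrac12\phi_x^2$ inside the flux. It has a favorable sign, and its spatial average is controlled only through the energy bound on $\|\phi_x\|_{L^2}^2$. So we must check that the Neumann condition does not spoil the identity $\int_I u_x=0$ or the energy estimate \eqref{0}. Both hold because $u$ itself satisfies Dirichlet boundary conditions.
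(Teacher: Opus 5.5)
Your proof is correct. The paper gives no argument of its own here (it simply defers to Lemma 2.3 of \cite{C-G}), and your self-contained effective-viscous-flux estimate along particle paths is the standard route: it needs only mass conservation and the energy identity \eqref{0} from Lemma \ref{L-0} (whose boundary terms vanish because of $\phi_x=\mu_x=0$ as well as $u=0$), and not \eqref{phi-b}. The one slip is a harmless sign: since $-u_x$ contains $-\frac{{\rm d}}{{\rm d}t}\Xi$, the identity is $\frac{{\rm d}}{{\rm d}t}(\log\rho+\Xi)=\int_I\big(\rho^{\gamma}+\frac12\phi_x^2+\rho u^2\big)\,{\rm d}x-\rho^{\gamma}-\frac12\phi_x^2$, which changes nothing because $|\Xi|\le C$; also, the $\rho_0+\delta$ regularization is unnecessary, since $\rho(X(t),t)=\rho_0(X(0))\exp\big(-\int_0^t u_x(X(s),s)\,{\rm d}s\big)$ handles vacuum directly.
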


Based on the above results, we establish the following lemma, which
is instrumental in the proof of the a priori estimates.

\begin{lemma}\label{L-5}
Let $(\rho, u, \phi)$ be the smooth solutions to \eqref{nsch-1d}--\eqref{ic}, then it holds that
\begin{align}\label{t-phi-2}
\int_{0}^{T} \left( \| \mu \|_{L^{2}}^{2} + \| \rho^{-1}\phi_{xx}\|_{L^{2}}^{2} + \| \phi_{xx}\|_{L^{2}}^{2} \right) {\rm d}t \le C(T).
\end{align}
\end{lemma}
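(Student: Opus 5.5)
The plan is to control $\mu$ in $L^{2}$ first, using only the dissipation $\int_0^T\|\mu_x\|_{L^2}^2\,{\rm d}t\le C$ from Lemma \ref{L-0}. After that, the bounds on $\phi_{xx}$ and $\rho^{-1}\phi_{xx}$ follow algebraically from the fourth equation of \eqref{nsch-1d}. The only difficulty is that $\mu$ has no boundary condition of its own, so Poincaré's inequality is not available. We therefore need some averaged quantity of $\mu$ that we can compute, and the natural one is the $\rho$-weighted mean $\int_I\rho\mu\,{\rm d}x$.

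\textbf{Step 1: the weighted mean of $\mu$ is bounded.} Integrate the relation $\rho\mu=-\phi_{xx}+\rho(\phi^3-\phi)$ over $I$. The term $\int_I\phi_{xx}\,{\rm d}x$ vanishes by the Neumann condition $\phi_x|_{x=0,1}=0$. Hence
\begin{align*}
\Big|\int_I\rho\mu\,{\rm d}x\Big|=\Big|\int_I\rho(\phi^3-\phi)\,{\rm d}x\Big|\le C\big(\|\phi\|_{L^\infty(Q_T)}^3+\|\phi\|_{L^\infty(Q_T)}\big)\int_I\rho\,{\rm d}x\le C(T).
\end{align*}
Here we used \eqref{phi-b} and conservation of mass, $\int_I\rho\,{\rm d}x=\int_I\rho_0\,{\rm d}x=1$.

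\textbf{Step 2: an $L^\infty$ bound for $\mu$.} For any $x,y\in I$ we have $\mu(x)=\mu(y)+\int_y^x\mu_x\,{\rm d}z$, so $|\mu(x)|\le|\mu(y)|\cdot$ is replaced by the averaged form below. Multiply by $\rho(y)\ge0$ and integrate in $y$, again using total mass $1$:
\begin{align*}
|\mu(x)|\le\Big|\int_I\rho\mu\,{\rm d}y\Big|+\|\mu_x\|_{L^1}\le C(T)+\|\mu_x\|_{L^2}.
\end{align*}
This gives $\|\mu\|_{L^2}\le\|\mu\|_{L^\infty}\le C+\|\mu_x\|_{L^2}$. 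Squaring, integrating in time, and applying Lemma \ref{L-0} yields $\int_0^T\|\mu\|_{L^2}^2\,{\rm d}t\le C(T)$.

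\textbf{Step 3: the bounds on $\phi_{xx}$ and $\rho^{-1}\phi_{xx}$.} Rewrite the fourth equation as $\phi_{xx}=\rho(\phi^3-\phi-\mu)$. On $\{\rho>0\}$ this gives $\rho^{-1}\phi_{xx}=\phi^3-\phi-\mu$. On the vacuum set, $\phi_{xx}=0$, so the quotient $\rho^{-1}\phi_{xx}$ is interpreted as zero there, consistently with \eqref{t-phi-2} from the earlier lemma. Using \eqref{phi-b} and \eqref{rho-b} we get
\begin{align*}
\|\rho^{-1}\phi_{xx}\|_{L^2}+\|\phi_{xx}\|_{L^2}\le C\big(1+\|\rho\|_{L^\infty}\big)\big(1+\|\mu\|_{L^2}\big).
\end{align*}
Squaring, integrating over $(0,T)$, and invoking Step 2 completes the estimate.

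The main point, rather than a genuine obstacle, is Step 1. The Neumann condition gives control of the mean of $\rho\mu$ rather than of $\mu$ itself, and the mass normalization together with the nonnegativity of $\rho$ allows this weighted mean to substitute for the missing Poincaré inequality even in the presence of vacuum.
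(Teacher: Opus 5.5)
Your proposal is correct and follows the paper's proof essentially step for step. The Neumann condition bounds the $\rho$-weighted mean $\int_I\rho\mu\,{\rm d}x$, the unit-mass normalization turns this into $\|\mu\|_{L^\infty}\le C+\|\mu_x\|_{L^2}$, and the bounds on $\rho^{-1}\phi_{xx}$ and $\phi_{xx}$ then follow algebraically from the fourth equation together with \eqref{phi-b} and \eqref{rho-b}. (The half-sentence in your Step 2 about $|\mu(y)|$ is garbled, but the displayed averaged inequality that follows it is the right argument.)
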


\begin{proof}[\bf Proof]
Integrating \eqref{nsch-1d}$_{4}$ over $I$ and using the boundary value condition \eqref{bc} yields
\begin{align*}
\int_{I} \rho \mu  {\rm d}x = \int_{I} \rho (\phi^{3} - \phi)  {\rm d}x  \le  \left( \| \phi \|_{L^{\infty}}^{3} + \| \phi \|_{L^{\infty}} \right)  \int_{I} \rho   {\rm d}x \le C.
\end{align*}
Due to the above inequality, one has
\begin{align}\label{mu00}
| \mu(x,t) | & = \left| \mu(x,t) \int_{I} \rho(y,t)  {\rm d}y \right|
\nonumber \\
& \le \left| \int_{I}  \rho(y,t) \left( \mu(x,t) -\mu(y,t)  \right) {\rm d}y \right| + \left|  \int_{I} \rho(y,t) \mu(y,t)  {\rm d}y \right|
\nonumber \\
& \le  \int_{I}  \rho(y,t)  \left|  \int_{y}^{x} \mu_{\xi}(\xi,t)  {\rm d} \xi \right|  {\rm d}y + C
\nonumber \\
& \le  \int_{I}  \rho(y,t)   \int_{I} | \mu_{x} |  {\rm d} x  {\rm d}y + C
\nonumber \\
& =  \int_{I} | \mu_{x} |  {\rm d} x + C \le   \| \mu_{x} \|_{L^{2}} + C,
\end{align}
which together with \eqref{0} implies that
\begin{align} \label{mu-0-w}
\int_{0}^{T} \| \mu \|_{L^{2}}^{2}  {\rm d}t \le C \int_{0}^{T} \| \mu \|_{L^{\infty}}^{2}  {\rm d}t \le  C\int_{0}^{T} \left( \| \mu_{x} \|_{L^{2}}^{2} +1  \right)  {\rm d}t \le C.
\end{align}
It follows from \eqref{nsch-1d}$_{4}$, \eqref{phi-b} and \eqref{mu-0-w} that
\begin{align}\label{1-rho-phi-1}
\int_{0}^{T} \left\|  \rho^{-1} \phi_{xx} \right\|_{L^{2}}^{2}  {\rm d}t  \le C \int_{0}^{T} \| \mu \|_{L^{2}}^{2}  {\rm d}t + C \int_{0}^{T} \| \phi^{3} -\phi \|_{L^{2}}^{2}  {\rm d}t \le C.
\end{align}
Combining with \eqref{mu-0-w}, \eqref{1-rho-phi-1} and \eqref{rho-b} leads to Lemma \ref{L-5}.
\end{proof}

\begin{lemma}\label{L-9}
Let $(\rho, u, \phi)$ be smooth solutions to \eqref{nsch-1d}--\eqref{ic}. Then
\begin{align}\label{key}
\| \phi_{t} \|_{L^{\infty}} \le C \| \sqrt{\rho}\, \phi_{t} \|_{L^{2}}+ C \| \phi_{xt} \|_{L^{2}} .
\end{align}
\end{lemma}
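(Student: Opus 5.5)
We will prove Lemma \ref{L-9} by the same averaging trick used in \eqref{mu00} for $\mu$, with $\mu$ replaced by $\phi_t$. The only structural inputs are the mass normalization $\int_I \rho\,{\rm d}x=\int_I\rho_0\,{\rm d}x=1$, which follows from integrating \eqref{nsch-1d}$_1$ and using $u|_{x=0,1}=0$, together with the pointwise bound \eqref{rho-b}. Since smooth solutions are considered, $\phi_t(\cdot,t)$ is continuous on $\bar I$ and all manipulations are legitimate for each fixed $t$.

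Fix $t$ and $x\in I$. Multiply $\phi_t(x,t)$ by $1=\int_I\rho(y,t)\,{\rm d}y$ and split:
\begin{align*}
|\phi_t(x,t)| \le \left|\int_I \rho(y,t)\big(\phi_t(x,t)-\phi_t(y,t)\big){\rm d}y\right| + \left|\int_I \rho(y,t)\phi_t(y,t)\,{\rm d}y\right|.
\end{align*}
For the first term, write $\phi_t(x,t)-\phi_t(y,t)=\int_y^x\phi_{xt}(\xi,t)\,{\rm d}\xi$. This gives the bound $\int_I\rho(y,t)\,{\rm d}y\int_I|\phi_{xt}|\,{\rm d}x\le\|\phi_{xt}\|_{L^2}$, by Cauchy--Schwarz on the unit interval. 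For the second term, write $\rho\phi_t=\sqrt\rho\cdot\sqrt\rho\,\phi_t$ and apply Cauchy--Schwarz:
\begin{align*}
\left|\int_I\rho\phi_t\,{\rm d}y\right|\le \Big(\int_I\rho\,{\rm d}y\Big)^{1/2}\|\sqrt\rho\,\phi_t\|_{L^2}=\|\sqrt\rho\,\phi_t\|_{L^2}.
\end{align*}
Taking the supremum over $x$ yields \eqref{key}, with $C=1$ in fact.

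Unlike the estimate \eqref{mu00} for $\mu$, no equation is needed here, since the weighted mean $\int\rho\phi_t$ is controlled directly by $\|\sqrt\rho\,\phi_t\|_{L^2}$; in particular \eqref{rho-b} is not even required. The only point requiring care, and the main thing to check, is that the total mass stays equal to $1$, rather than merely positive, for all $t$. This is what allows vacuum regions to be harmless: the averaging uses only $\rho\ge0$ and $\int\rho=1$, never a lower bound on $\rho$. The same argument with $\phi$ in place of $\phi_t$ gives the $i=0$ case announced in the introduction.
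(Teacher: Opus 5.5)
Your proof is correct and follows the paper's route. The paper omits the proof and refers back to the averaging argument of \eqref{mu00}; your write-up fills that in, with the one natural change that the weighted mean $\int_I\rho\,\phi_t\,{\rm d}x$ is bounded by Cauchy--Schwarz and $\int_I\rho\,{\rm d}x=1$ rather than through the equation, which is exactly why $\|\sqrt{\rho}\,\phi_t\|_{L^2}$ appears on the right-hand side of \eqref{key}.
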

\begin{proof}[\bf Proof]
The proof can be carried out in the same way as for \eqref{mu00} and is therefore omitted.
\end{proof}
\begin{lemma}\label{L-6}
Let $(\rho, u, \phi)$ be the smooth solutions to \eqref{nsch-1d}--\eqref{ic}, then it holds that
\begin{align} \label{e-6}
\sup\limits_{0\le t \le T} \left(t\left\|  \rho^{-\frac{1}{2}}\phi_{xx} \right\|_{L^{2}}^{2} \right) +  \int_{0}^{T} t \| \sqrt{\rho} \phi_{t} \|_{L^{2}}^{2}  {\rm d}t \le C(T).
\end{align}
\end{lemma}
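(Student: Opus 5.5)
The plan is a time-weighted energy estimate for the phase-field equation \eqref{nsch-1d}$_3$, tested against the material derivative $\dot\phi:=\phi_t+u\phi_x$ and multiplied by $t$. Because of the factor $t$, the initial value $\|\rho_0^{-1/2}\phi_{0xx}\|_{L^2}$ (which need not be finite for $\phi_0\in H^1$) never enters. The only time-integrated quantity required is $\int_0^T\|\rho^{-1/2}\phi_{xx}\|_{L^2}^2\,{\rm d}t\le C$. This follows from Lemma \ref{L-5} together with \eqref{rho-b}, since $\phi_{xx}^2/\rho\le \|\rho\|_{L^\infty}\,\phi_{xx}^2/\rho^2$.

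\textbf{Step 1: testing the equation.} Multiply \eqref{nsch-1d}$_3$ by $\dot\phi$ and integrate over $I$:
\begin{align*}
\int_I \rho|\dot\phi|^2\,{\rm d}x=\int_I \mu_{xx}\dot\phi\,{\rm d}x=\int_I \mu\,\dot\phi_{xx}\,{\rm d}x .
\end{align*}
The second identity uses $\mu_x=\phi_{xt}=0$ and $u=0$ on $\partial I$ for the boundary terms. Next substitute $\mu=-\rho^{-1}\phi_{xx}+(\phi^3-\phi)$ from \eqref{nsch-1d}$_4$.

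\textbf{Step 2: the principal term.} The principal term is $-\int_I \rho^{-1}\phi_{xx}(\phi_t+u\phi_x)_{xx}\,{\rm d}x$. Write $\rho^{-1}$ as $\rho/\rho^2$ and use the mass equation in the form $(\rho^{-1})_t+u(\rho^{-1})_x=\rho^{-1}u_x$. Commuting $\partial_{xx}$ with the transport operator, this term becomes
\begin{align*}
-\frac12\frac{\rm d}{{\rm d}t}\int_I\frac{\phi_{xx}^2}{\rho}\,{\rm d}x
\end{align*}
plus commutator terms of the type $\int_I u_x\phi_{xx}^2/\rho$, $\int_I u_{xx}\phi_x\phi_{xx}/\rho$ and $\int_I u\,\phi_x\,(\ldots)$. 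The derivatives of $\rho$ all cancel after integrating by parts in the transport structure, which is the point of using $\dot\phi$ rather than $\phi_t$. The lower-order term $\int_I(\phi^3-\phi)\dot\phi_{xx}\,{\rm d}x$ is integrated by parts twice. It is then absorbed using \eqref{phi-b}, \eqref{rho-b}, Lemma \ref{L-0} and Young's inequality against $\frac12\int_I\rho|\dot\phi|^2$ and $\|\phi_{xx}\|_{L^2}^2$. Finally, $\|\sqrt\rho\phi_t\|_{L^2}^2\le 2\|\sqrt\rho\dot\phi\|_{L^2}^2+C\|u\|_{L^\infty}^2\|\phi_x\|_{L^2}^2$, and the last term is integrable in time by Lemma \ref{L-0}.

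\textbf{Step 3: Gronwall.} Multiply by $t$, which produces the extra term $\frac12\int_I\phi_{xx}^2/\rho$, integrable in time by the remark above. This gives
\begin{align*}
\frac{\rm d}{{\rm d}t}\Big(t\|\rho^{-\frac12}\phi_{xx}\|_{L^2}^2\Big)+t\|\sqrt\rho\dot\phi\|_{L^2}^2\le C\big(1+\|u_x\|_{L^\infty}+\|u_{xx}\|_{L^2}^2\big)\,t\|\rho^{-\frac12}\phi_{xx}\|_{L^2}^2+C\big(\|\rho^{-\frac12}\phi_{xx}\|_{L^2}^2+1\big).
\end{align*}
Gronwall's inequality then closes the argument, provided $\int_0^T(\|u_x\|_{L^\infty}+\|u_{xx}\|_{L^2}^2)\,{\rm d}t\le C$.

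\textbf{Main obstacle.} The main difficulty is this velocity bound, which is not among the lemmas stated so far. I would obtain it simultaneously, by adding to the above the standard unweighted $H^1$ estimate for $u$. Multiplying \eqref{nsch-1d}$_2$ by $u_t$ gives $\sup_t\|u_x\|_{L^2}^2+\int_0^T\|\sqrt\rho u_t\|_{L^2}^2\le C$; no compatibility condition is needed since $u_0\in H_0^1$. The capillary term $\frac12(\phi_x^2)_x u_t$ is handled through $\|\phi_x\|_{L^\infty}^2\le C\|\phi_x\|_{L^2}\|\phi_{xx}\|_{L^2}$, which is integrable in time by Lemma \ref{L-5}. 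The equation then yields $u_{xx}=\rho u_t+\rho uu_x+(\rho^\gamma)_x+\phi_x\phi_{xx}$. The dangerous piece $(\rho^\gamma)_x$ requires $\|\rho_x\|_{L^2}$, which I would control via the effective flux $u_x-\rho^\gamma-\frac12\phi_x^2$ and the standard 1D $\rho_x$ estimate. Together these give $u\in L^2(0,T;H^2)$, and hence $\|u_x\|_{L^\infty}\in L^1(0,T)$. If this coupling proves awkward, the fallback is to keep the terms involving $u_{xx}$ on the right and absorb them by Young's inequality using $\int_0^T\|u_{xx}\|_{L^2}^2\le C$ once that bound is in hand.
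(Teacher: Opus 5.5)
\textbf{The lower-order term fails as written.} Integrating $\int_I(\phi^3-\phi)\dot\phi_{xx}\,{\rm d}x$ by parts twice in $x$ gives $\int_I\big[6\phi\phi_x^2+(3\phi^2-1)\phi_{xx}\big]\dot\phi\,{\rm d}x$.

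The second piece is harmless. Since $\phi_{xx}=\rho(\phi^3-\phi-\mu)$ carries the density, it is bounded by $C\|\rho^{-1/2}\phi_{xx}\|_{L^2}\|\sqrt\rho\,\dot\phi\|_{L^2}$.

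The first piece carries no power of $\rho$. Near (approximate) vacuum $\phi_x$ need not be small. At this level $\dot\phi$ is controlled only in $L^2(\rho\,{\rm d}x)$, because $\dot\phi_x$ (i.e.\ $\phi_{xt}$) is not yet available. So $\int_I 6\phi\phi_x^2\dot\phi\,{\rm d}x$ cannot be absorbed by $\frac12\int_I\rho|\dot\phi|^2{\rm d}x$ with a constant independent of $\inf\rho$.

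The repair is what the paper does. Integrate the $\phi_t$-part by parts in time:
$\int_I(\phi^3-\phi)\phi_{xxt}\,{\rm d}x=\frac{\rm d}{{\rm d}t}\int_I(\phi^3-\phi)\phi_{xx}\,{\rm d}x-\int_I(3\phi^2-1)\phi_t\phi_{xx}\,{\rm d}x$.
After multiplying by $t$, the term $t\int_I(\phi^3-\phi)\phi_{xx}\,{\rm d}x$ is absorbed by $\frac14 t\|\rho^{-1/2}\phi_{xx}\|_{L^2}^2+Ct$. The $u\phi_x$-part needs only one spatial integration by parts:
$-\int_I(3\phi^2-1)\phi_x(u\phi_x)_x\,{\rm d}x\le C\|u_x\|_{L^2}\|\phi_{xx}\|_{L^2}$.

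\textbf{The velocity bound, and the idea that makes it unnecessary.} Your ordering is not circular: Lemmas \ref{L-7} and \ref{L-8} do not use Lemma \ref{L-6}. However, the mechanism you give for the capillary term does not work.

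$\int_I\phi_x\phi_{xx}u_t\,{\rm d}x$ must be paired with $\|\sqrt\rho u_t\|_{L^2}$. Interpolating $\|\phi_x\|_{L^\infty}^2\le C\|\phi_x\|_{L^2}\|\phi_{xx}\|_{L^2}$ leaves you with $\|\phi_x\|_{L^\infty}^2\|\rho^{-1/2}\phi_{xx}\|_{L^2}^2$. This is cubic in $\phi_{xx}$-norms and is not integrable in time from Lemma \ref{L-5}. The same defect appears in $\|\phi_x\phi_{xx}\|_{L^2}\lesssim\|\phi_{xx}\|_{L^2}^{3/2}\in L^{4/3}(0,T)$, whereas your Gronwall weight needs $\|u_{xx}\|_{L^2}^2\in L^1(0,T)$.

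What is needed is the constitutive relation $\phi_{xx}=\rho(\phi^3-\phi-\mu)$ together with $\|\mu\|_{L^\infty}\le\|\mu_x\|_{L^2}+C$ from \eqref{mu00}. These give $\big|\int_I\phi_x\phi_{xx}u_t\,{\rm d}x\big|\le C(\|\mu\|_{H^1}+1)\|\sqrt\rho u_t\|_{L^2}$ and $\|\phi_x\phi_{xx}\|_{L^2}\le C(\|\mu\|_{H^1}+1)\in L^2(0,T)$.

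This substitution is also the paper's key idea for the lemma itself, and it removes your ``main obstacle''. The paper tests by $\phi_t$ and rewrites $\frac12\int_I(\rho^{-1})_t\phi_{xx}^2\,{\rm d}x$ using $\rho_t=-(\rho u)_x$. It then replaces $\rho^{-1}\phi_{xxx}$ via $(\rho^{-1}\phi_{xx})_x=-\mu_x+(3\phi^2-1)\phi_x$. The $\rho_x$ terms cancel, leaving only $\int_I[\mu_x-(3\phi^2-1)\phi_x]u\phi_{xx}\,{\rm d}x$. The Gronwall weight is therefore just $1+\|u_x\|_{L^2}^2$, and no $\rho_x$, $\|u_x\|_{L^\infty}$ or $u_{xx}$ bounds are required.

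In your material-derivative formulation, your transport cancellation is correct. Your commutators are
$-\int_I\rho^{-1}\phi_{xx}(u_x\phi_x)_x\,{\rm d}x=\int_I u_x\phi_x[(3\phi^2-1)\phi_x-\mu_x]\,{\rm d}x\le C\|u_x\|_{L^2}\|\rho^{-1/2}\phi_{xx}\|_{L^2}(1+\|\mu_x\|_{L^2})$,
so they close with energy-level quantities once this substitution is made. Without it, you pay for the full velocity $H^2$ theory and still need the fix above to obtain it.
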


\begin{proof}[\bf Proof]
By integration by parts and \eqref{nsch-1d}$_{4}$, we find
\begin{align}\label{f1}
\int_{I} \mu_{xx} \phi_{t} {\rm d}x = \int_{I} \mu \phi_{xxt} {\rm d}x = \int_{I} \left( -\frac{1}{\rho} \phi_{xx} + (\phi^{3} -\phi) \right) \phi_{xxt} {\rm d}x.
\end{align}
Testing \eqref{nsch-1d}$_{3}$ by $\phi_{t}$, integrating over $I$ and using \eqref{f1}, we have
\begin{align}\label{dt-1}
\frac{1}{2} \frac{\rm d}{{\rm d}t} \int_{I} \frac{1}{\rho} \phi_{xx}^{2}  {\rm d}x +  \int_{I} \rho \phi_{t}^{2}  {\rm d}x & = \frac{\rm d}{{\rm d}t} \int_{I} (\phi^{3} -\phi) \phi_{xx} {\rm d}x - \int_{I} (3\phi^{2} -1) \phi_{t} \phi_{xx} {\rm d}x
\nonumber \\
& \quad + \frac{1}{2} \int_{I} \left(\frac{1}{\rho} \right)_{t} \phi_{xx}^{2}  {\rm d}x - \int_{I} \rho u \phi_{x} \phi_{t}  {\rm d}x
\nonumber \\
& =:  \frac{\rm d}{{\rm d}t} \int_{I} (\phi^{3} -\phi) \phi_{xx} {\rm d}x + \sum\limits_{i=1}^{3} A_{i}.
\end{align}
We next estimate each term $A_{i} $ as follows. Direct calculation shows that
\begin{align*}
 &  A_{1}  \le C(\| \phi\|_{L^{\infty}}^{2} + 1) \| \sqrt{\rho} \phi_{t} \|_{L^{2}} \left\| \rho^{-\frac{1}{2}}  \phi_{xx} \right\|_{L^{2}} \le \frac{1}{4} \| \sqrt{\rho} \phi_{t} \|_{L^{2}}^{2} + C \left\| \rho^{-\frac{1}{2}}  \phi_{xx} \right\|_{L^{2}}^{2},
 \\[2mm]
 & A_{3}  \le C \| u \|_{L^{\infty}}  \| \phi_{x} \|_{L^{2}} \| \sqrt{\rho} \phi_{t} \|_{L^{2}} \le C \| u_{x} \|_{L^{2}}  \| \sqrt{\rho} \phi_{t} \|_{L^{2}} \le \frac{1}{4} \| \sqrt{\rho} \phi_{t} \|_{L^{2}}^{2} + C \| u_{x} \|_{L^{2}}^{2}.
\end{align*}
As for $A_{2}$, it follows from \eqref{nsch-1d}$_{1}$ and \eqref{nsch-1d}$_{4}$ that
\begin{align*}
 A_{2} & = \frac{1}{2} \int_{I} \frac{1}{\rho^{2}} (\rho u)_{x}  \phi_{xx}^{2}  {\rm d}x = - \frac{1}{2} \int_{I} \left( \frac{1}{\rho^{2}} \right)_{x} \rho u  \phi_{xx}^{2}  {\rm d}x -  \int_{I}  \frac{1}{\rho}  \phi_{xxx} u \phi_{xx} {\rm d}x
 \\
& =  \int_{I}  \frac{\rho_{x}}{\rho^{2}}  u  \phi_{xx}^{2}  {\rm d}x +  \int_{I}  \left[\mu_{x} -(3\phi^{2}-1) \phi_{x} + \left( \frac{1}{\rho} \right)_{x} \phi_{xx}  \right]  u \phi_{xx} {\rm d}x
\\
& =  \int_{I}  \left[\mu_{x} -(3\phi^{2}-1) \phi_{x}  \right]  u \phi_{xx} {\rm d}x
\\
& \le C \| u \|_{L^{\infty}} \left\|  \rho^{-\frac{1}{2}}\phi_{xx} \right\|_{L^{2}} \left(  \| \mu_{x} \|_{L^{2}} + \| 3\phi^{2}-1 \|_{L^{\infty}}  \| \phi_{x} \|_{L^{2}}  \right)
\\
& \le C\| u_{x} \|_{L^{2}}  \left\|  \rho^{-\frac{1}{2}}\phi_{xx} \right\|_{L^{2}}  \left(  \| \mu_{x} \|_{L^{2}} + 1  \right)
\\
& \le C \left(  \| \mu_{x} \|_{L^{2}}^{2} + 1  \right) + C \| u_{x} \|_{L^{2}}^{2} \left\|  \rho^{-\frac{1}{2}}\phi_{xx} \right\|_{L^{2}}^{2}.
\end{align*}
Substituting $A_{1}$--$A_{3}$ into \eqref{dt-1} and multiplying \eqref{dt-1} by $t$, we get
\begin{align}\label{dt-1.5}
& \frac{\rm d}{{\rm d}t} \left( t \left\|  \rho^{-\frac{1}{2}}\phi_{xx} \right\|_{L^{2}}^{2} \right)  +  t\| \sqrt{\rho} \phi_{t} \|_{L^{2}}^{2}
 \nonumber \\
 \le& \frac{\rm d}{{\rm d}t} \left( t\int_{I} (\phi^{3} -\phi) \phi_{xx} {\rm d}x \right) +  \left\|  \rho^{-\frac{1}{2}}\phi_{xx} \right\|_{L^{2}}^{2}-\int_{I} (\phi^{3} -\phi) \phi_{xx} {\rm d}x
\nonumber \\
&  + C t \left(  \| \mu_{x} \|_{L^{2}}^{2}+ \| u_{x} \|_{L^{2}}^{2} + 1  \right) + C (\| u_{x} \|_{L^{2}}^{2} + 1) \left( t \left\|  \rho^{-\frac{1}{2}}\phi_{xx} \right\|_{L^{2}}^{2} \right)
 \nonumber \\
 \le& \frac{\rm d}{{\rm d}t} \left( t\int_{I} (\phi^{3} -\phi) \phi_{xx} {\rm d}x \right) + C (\| u_{x} \|_{L^{2}}^{2} + 1) \left( t \left\|  \rho^{-\frac{1}{2}}\phi_{xx} \right\|_{L^{2}}^{2} \right)
\nonumber \\
&  + C(T) \left(  \| \mu_{x} \|_{L^{2}}^{2}+ \| u_{x} \|_{L^{2}}^{2} + \left\|  \rho^{-1}\phi_{xx} \right\|_{L^{2}}^{2} + \|\phi_{xx}\|_{L^{2}}^{2}  +1 \right).
\end{align}
Integrating \eqref{dt-1.5} over $[0,t]$, using \eqref{0} and \eqref{t-phi-2}, we obtain
\begin{align*}
& t \left\|  \rho^{-\frac{1}{2}}\phi_{xx}  \right\|_{L^{2}}^{2}   + \int_{0}^{t} t \| \sqrt{\rho} \phi_{t} \|_{L^{2}}^{2}  {\rm d}s
\\
\le&   t \int_{I} [(\phi^{3} -\phi) \phi_{xx}] (x,t) {\rm d}x +  C \int_{0}^{t} \left(\| u_{x} \|_{L^{2}}^{2} + 1 \right) \left(s\left\|  \rho^{-\frac{1}{2}}\phi_{xx} \right\|_{L^{2}}^{2} \right) {\rm d}s+ C
\\
 \le & \frac{1}{2} \left( t \left\|  \rho^{-\frac{1}{2}}\phi_{xx} \right\|_{L^{2}}^{2} \right)
 +  C \int_{0}^{t} \left(\| u_{x} \|_{L^{2}}^{2} + 1 \right) \left(s\left\|  \rho^{-\frac{1}{2}}\phi_{xx} \right\|_{L^{2}}^{2} \right) {\rm d}s + C,
\end{align*}
which, together with \eqref{0} and Gronwall's inequality, implies Lemma \ref{L-6}.
\end{proof}

\begin{lemma}\label{L-7}
Let $(\rho, u, \phi)$ be the smooth solutions to \eqref{nsch-1d}--\eqref{ic}, then it holds that
\begin{align}\label{u-x}
\sup\limits_{0\le t \le T} \| u_{x}\|_{L^{2}}^{2} +  \int_{0}^{T} \|  \sqrt{\rho} u_{t} \|_{L^{2}}^{2}  {\rm d}t \le C(T).
\end{align}
\end{lemma}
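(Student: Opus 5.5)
Since $u_0\in H_0^1$ and no compatibility condition on $u$ is needed at this level, I will do an unweighted $H^1$ estimate for $u$: test the momentum equation \eqref{nsch-1d}$_2$ with $u_t$ and integrate over $I$. Using $u|_{x=0,1}=0$ (so $u_t|_{x=0,1}=0$), this gives
\begin{align*}
\frac12\frac{\rm d}{{\rm d}t}\|u_x\|_{L^2}^2+\|\sqrt\rho u_t\|_{L^2}^2
=-\int_I\rho u u_x u_t\,{\rm d}x-\int_I(\rho^\gamma)_x u_t\,{\rm d}x-\frac12\int_I(\phi_x^2)_x u_t\,{\rm d}x .
\end{align*}
The convective term is bounded by $\|\sqrt\rho\|_{L^\infty}\|u\|_{L^\infty}\|u_x\|_{L^2}\|\sqrt\rho u_t\|_{L^2}\le \frac14\|\sqrt\rho u_t\|_{L^2}^2+C\|u_x\|_{L^2}^4$, by \eqref{rho-b}. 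This is Gronwall-admissible because $\|u_x\|_{L^2}^2\in L^1(0,T)$ by \eqref{0}.

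The pressure and capillary terms cannot be absorbed by $\sqrt\rho u_t$ because of vacuum. I will therefore integrate by parts and move time derivatives outside:
\begin{align*}
-\int_I(\rho^\gamma)_x u_t\,{\rm d}x&=\frac{\rm d}{{\rm d}t}\int_I\rho^\gamma u_x\,{\rm d}x-\int_I(\rho^\gamma)_t u_x\,{\rm d}x,\\
-\frac12\int_I(\phi_x^2)_x u_t\,{\rm d}x&=\frac{\rm d}{{\rm d}t}\int_I\frac12\phi_x^2 u_x\,{\rm d}x-\int_I\phi_x\phi_{xt}u_x\,{\rm d}x .
\end{align*}
The pressure remainder uses $(\rho^\gamma)_t=-u(\rho^\gamma)_x-\gamma\rho^\gamma u_x$. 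After one more integration by parts it equals $(\gamma-1)\int\rho^\gamma u_x^2$, which is bounded by $C\|u_x\|_{L^2}^2$. The boundary terms $\int\rho^\gamma u_x$ and $\frac12\int\phi_x^2u_x$ at time $t$ are controlled by $\frac14\|u_x\|_{L^2}^2+C+C\|\phi_x\|_{L^\infty}^2\|\phi_x\|_{L^2}^2$.

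The main obstacle is the term $\int\phi_x\phi_{xt}u_x$. It involves $\phi_{xt}$, which is only available with a time weight, and we do not want weights here. To avoid $\phi_t$, I will use a different route: write $-\frac12(\phi_x^2)_x=-\phi_x\phi_{xx}$ and keep it on the right-hand side. It is then estimated by
\[
\|\phi_x\phi_{xx}\|_{L^2}\,\|u_t\|_{L^2}.
\]
Since $u_t$ is not controlled in vacuum, this also fails directly. Instead, I will use the identity $\phi_{xx}=\rho(\phi^3-\phi-\mu)$ from \eqref{nsch-1d}$_4$. This gives $-\phi_x\phi_{xx}=\rho\phi_x(\mu-\phi^3+\phi)$, which carries a factor of $\rho$. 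Hence
\[
\Big|\int\phi_x\phi_{xx}u_t\Big|\le C\|\sqrt\rho u_t\|_{L^2}\,\|\phi_x\|_{L^\infty}\,(\|\mu\|_{L^2}+1).
\]
This is where the structure of the Cahn--Hilliard coupling rescues the estimate. Similarly, I would treat the pressure term as $\rho\,\gamma\rho^{\gamma-2}\rho_x$ only if $\gamma\ge2$, so I keep the time-derivative trick above for the pressure. For $\|\phi_x\|_{L^\infty}$ I will use $\|\phi_x\|_{L^\infty}\le C\|\phi_{xx}\|_{L^2}$, which follows from the Neumann condition $\phi_x|_{\partial I}=0$. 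By Lemma~\ref{L-5}, $\|\phi_{xx}\|_{L^2}^2\in L^1(0,T)$.

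The remaining term is $\|\phi_{xx}\|_{L^2}^2(\|\mu\|_{L^2}^2+1)$, and this product needs care. I will bound $\|\phi_x\|_{L^\infty}^2\le C\|\phi_x\|_{L^2}\|\phi_{xx}\|_{L^2}\le C\|\phi_{xx}\|_{L^2}$. Then I use \eqref{mu00}, $\|\mu\|_{L^2}\le\|\mu_x\|_{L^2}+C$, together with Young's inequality, to get
\[
\|\phi_{xx}\|_{L^2}\,(\|\mu_x\|_{L^2}^2+1)\le C\big(\|\rho^{-1}\phi_{xx}\|_{L^2}\big)(\|\mu_x\|^2_{L^2}+1).
\]
Since $\phi_{xx}=\rho(\phi^3-\phi-\mu)$, I expect $\|\phi_{xx}\|_{L^2}\le C(\|\mu\|_{L^2}+1)\le C(\|\mu_x\|_{L^2}+1)$. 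The contribution is then $\lesssim\|\mu_x\|_{L^2}^3+1$. This is not integrable a priori, so it must be handled by Gronwall against a quantity already controlled. If this fails, I will fall back to the $\frac{\rm d}{{\rm d}t}$ formulation and bound $\int\phi_x\phi_{xt}u_x$ via $\|\phi_{xt}\|$ obtained from the Cahn--Hilliard equation, using $\sqrt t$ weights from Lemma~\ref{L-6}. In either case I will close with Gronwall's inequality, using $\|u_x\|_{L^2}^2\in L^1(0,T)$ and $\|u_x(0)\|_{L^2}=\|u_{0x}\|_{L^2}<\infty$.
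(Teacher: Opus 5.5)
Your skeleton is the paper's: test \eqref{nsch-1d}$_2$ with $u_t$, pull $\frac{\rm d}{{\rm d}t}\int_I\rho^\gamma u_x\,{\rm d}x$ out of the pressure term, and use \eqref{nsch-1d}$_4$ to write $-\phi_x\phi_{xx}=\rho(\mu-\phi^3+\phi)\phi_x$. However, the proposal does not close.

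The decisive gap is the capillary term. You put $\phi_x$ in $L^\infty$ and $\mu$ in $L^2$, which leaves $\|\phi_{xx}\|_{L^2}(\|\mu_x\|_{L^2}^2+1)$. This is a product of an $L^2(0,T)$ function and an $L^1(0,T)$ function, so it is not integrable in general. It is also not of the form $g(t)\|u_x\|_{L^2}^2$ with $g\in L^1(0,T)$, so Gronwall cannot absorb it.

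Your fallback fails as well. Control of $t\|\phi_{xt}\|_{L^2}^2$ comes only from Lemma~\ref{L-10}, whose proof uses \eqref{u-x} and Lemma~\ref{L-8}, so the argument would be circular. In addition, the boundary term $\frac12\int_I\phi_x^2u_x\,{\rm d}x$ would require $\sup_t\|\phi_x\|_{L^\infty}$, which is unavailable for $\phi_0\in H^1$; already $\int_I\phi_{0x}^2u_{0x}\,{\rm d}x$ need not be finite.

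The missing observation is to allocate H\"older the other way. By \eqref{mu00}, $\|\mu\|_{L^\infty}\le\|\mu_x\|_{L^2}+C$ pointwise in time, and $\|\phi_x\|_{L^2}\le C$ by \eqref{0}. Hence
\[
\Big|\int_I\rho(\mu-\phi^3+\phi)\phi_xu_t\,{\rm d}x\Big|\le C(\|\mu_x\|_{L^2}+1)\|\phi_x\|_{L^2}\|\sqrt\rho u_t\|_{L^2}\le\frac14\|\sqrt\rho u_t\|_{L^2}^2+C\|\mu_x\|_{L^2}^2+C,
\]
which is integrable in time by \eqref{0}. This is exactly what the paper does, phrased with $\|\mu\|_{H^1}$ and Lemma~\ref{L-5}.

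Second, your pressure computation is wrong. From $(\rho^\gamma)_t=-u(\rho^\gamma)_x-\gamma\rho^\gamma u_x$ and one integration by parts,
\[
-\int_I(\rho^\gamma)_tu_x\,{\rm d}x=(\gamma-1)\int_I\rho^\gamma u_x^2\,{\rm d}x-\int_I\rho^\gamma uu_{xx}\,{\rm d}x,
\]
and you dropped the last term. It cannot be estimated through $\|\rho_x\|_{L^2}$ or $\|u_{xx}\|_{L^2}$, because these are obtained only in Lemma~\ref{L-8}, which itself relies on the present lemma.

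The paper instead substitutes $u_{xx}=\rho u_t+\rho uu_x+(\rho^\gamma)_x+\phi_x\phi_{xx}$ from the momentum equation. The $(\rho^\gamma)_x$ piece becomes $\frac12\int_I\rho^{2\gamma}u_x\,{\rm d}x$ after integrating by parts. The remaining pieces are bounded by $\frac16\|\sqrt\rho u_t\|_{L^2}^2+C\|u_x\|_{L^2}^2+C(\|\phi_{xx}\|_{L^2}^2+1)$, using $\|\sqrt\rho u\|_{L^2}\le C$ for the convective piece, and this is integrable by Lemma~\ref{L-5}.

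With both corrections, your final Gronwall step using $\|u_x\|_{L^2}^2\in L^1(0,T)$ goes through.
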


\begin{proof}[\bf Proof]
Testing \eqref{nsch-1d}$_{2}$ by $u_{t}$ and integrating over $I$ by parts, one has
\begin{align}\label{dt-2}
\frac{1}{2} \frac{\rm d}{{\rm d}t} \| u_{x}\|_{L^{2}}^{2} +  \|  \sqrt{\rho} u_{t} \|_{L^{2}}^{2}
 & =  \frac{\rm d}{{\rm d}t} \int_{I} \rho^{\gamma} u_{x}  {\rm d}x - \int_{I} (\rho^{\gamma})_{t} u_{x}  {\rm d}x  - \int_{I} \rho u u_{x} u_{t} {\rm d}x - \int_{I} \phi_{x} \phi_{xx} u_{t} {\rm d}x
\nonumber \\
& =:  \frac{\rm d}{{\rm d}t} \int_{I} \rho^{\gamma} u_{x}  {\rm d}x + \sum\limits_{i=1}^{3} B_{i}
\end{align}
We next estimate each term $B_{i} $ as follows. Firstly, using \eqref{nsch-1d}$_{1}$, \eqref{0}, \eqref{rho-b} and integration by parts formula, one infers that
\begin{align*}
 B_{1} & = \int_{I} \gamma \rho^{\gamma-1} (\rho u)_{x} u_{x}   {\rm d}x =   \int_{I} \gamma \rho^{\gamma}  u_{x}^{2}   {\rm d}x + \int_{I} (\rho^{\gamma})_{x}  u u_{x}   {\rm d}x
\\
& =  (\gamma -1) \int_{I} \rho^{\gamma}  u_{x}^{2}   {\rm d}x -  \int_{I} \rho^{\gamma}  u u_{xx}   {\rm d}x
\\
& =  (\gamma -1) \int_{I} \rho^{\gamma}  u_{x}^{2}   {\rm d}x -  \int_{I} \rho^{\gamma}  u \left[ \rho u_{t} + \rho u u_{x} +  \phi_{x} \phi_{xx}\right]   {\rm d}x + \frac{1}{2} \int_{I} \rho^{2\gamma}  u_{x} {\rm d}x
\\
& \le C \| u_{x}\|_{L^{2}}^{2} + C \| u \|_{L^{\infty}} \left( \| \sqrt{\rho}u_{t} \|_{L^{2}} +  \| u \|_{L^{2}} \| u_{x} \|_{L^{2}} + \| \phi_{x} \|_{L^{2}} \| \phi_{xx} \|_{L^{2}} \right) + C \| u_{x}\|_{L^{2}}
\\
& \le  \frac{1}{6} \| \sqrt{\rho}u_{t} \|_{L^{2}}^{2}  + C \| u_{x}\|_{L^{2}}^{2} + C \left( \| \phi_{xx} \|_{L^{2}}^{2} + 1 \right).
\end{align*}
Next, direct calculation shows that
\begin{align*}
& B_{2} \le C \| u \|_{L^{\infty}} \| u_{x} \|_{L^{2}} \| \sqrt{\rho}u_{t} \|_{L^{2}} \le \frac{1}{6} \| \sqrt{\rho}u_{t} \|_{L^{2}}^{2} + C \| u_{x} \|_{L^{2}}^{4}.
\end{align*}
Finally, it follows from \eqref{nsch-1d}$_{4}$, \eqref{0} and \eqref{phi-b} that
\begin{align*}
& B_{3} = \int_{I} \left(\rho \mu -\rho (\phi^{3}-\phi) \right)  \phi_{x} u_{t} {\rm d}x
\\
& \le C\left(  \| \mu \|_{L^{\infty}} +  \| \phi^{3} \|_{L^{\infty}} + \| \phi \|_{L^{\infty}} \right)  \| \phi_{x} \|_{L^{2}}  \| \sqrt{\rho}u_{t} \|_{L^{2}}
\\
& \le  C\left(  \| \mu \|_{H^{1}} +  1 \right)   \| \sqrt{\rho}u_{t} \|_{L^{2}}
 \le \frac{1}{6} \| \sqrt{\rho}u_{t} \|_{L^{2}}^{2} +  C \| \mu \|_{H^{1}}^{2} + C.
\end{align*}
Substituting $B_{1}$--$B_{3}$ into \eqref{dt-2}, integrating it over $[0,t]$ and using \eqref{0}, \eqref{rho-b} and \eqref{t-phi-2}, we arrive at
\begin{align*}
&  \| u_{x}(t)\|_{L^{2}}^{2} +  \int_{0}^{t}\|  \sqrt{\rho} u_{t} \|_{L^{2}}^{2} {\rm d}s
 \nonumber \\
 & \le  \| u_{x}(0)\|_{L^{2}}^{2} - \int_{I} (\rho^{\gamma} u_{x} )(x, 0)  {\rm d}x + \int_{I} \rho^{\gamma} u_{x}(x, t)  {\rm d}x
 \nonumber \\
 & \quad + C \int_{0}^{t} \| u_{x}\|_{L^{2}}^{2} \| u_{x}\|_{L^{2}}^{2} {\rm d}s + C
\nonumber \\
& \le C  ( \| u_{0}\|_{H^{1}}^{2} + \| \rho_{0}\|_{H^{1}}^{\gamma}+ 1) + \frac{1}{2}\| u_{x}(t)\|_{L^{2}}^{2} + C \int_{0}^{t} \| u_{x}\|_{L^{2}}^{2} \| u_{x}\|_{L^{2}}^{2} {\rm d}s + C,
 \end{align*}
which, together with \eqref{0} and Gronwall's inequality, completes the proof of Lemma \ref{L-7}.
\end{proof}

\begin{lemma}\label{L-8}
Let $(\rho, u, \phi)$ be the smooth solutions to \eqref{nsch-1d}--\eqref{ic}, then it holds that
\begin{align}\label{1-rho-b}
\sup\limits_{0\le t \le T} \left(  \| \rho_{x}\|_{L^{2}}^{2}  + \| \rho_{t}\|_{L^{2}}^{2}  \right) + \int_{0}^{T} \| u_{xx} \|_{L^{2}} {\rm d}t \le C(T).
\end{align}
\end{lemma}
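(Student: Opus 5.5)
The plan is to first bound $u_{xx}$ in $L^1(0,T;L^2)$ directly from the momentum equation, and then obtain $\rho_x$ from a transport estimate and close with Gronwall. Everything else in the statement follows from these two steps.

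\textbf{Step 1: a bound for $u_{xx}$.} By \eqref{nsch-1d}$_2$ and \eqref{nsch-1d}$_4$,
\begin{align*}
u_{xx} = \rho u_t + \rho u u_x + \gamma \rho^{\gamma-1}\rho_x + \phi_x\phi_{xx},
\end{align*}
so that
\begin{align*}
\| u_{xx}\|_{L^2} \le C\|\sqrt{\rho}u_t\|_{L^2} + C\|u_x\|_{L^2}^2 + C\|\rho_x\|_{L^2} + \|\phi_x\|_{L^\infty}\|\phi_{xx}\|_{L^2}.
\end{align*}
Here I use \eqref{rho-b}, \eqref{u-x} and $\|u\|_{L^\infty}\le \|u_x\|_{L^2}$.

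For the last term, $\phi_x$ vanishes at the boundary, so $\|\phi_x\|_{L^\infty}\le C\|\phi_{xx}\|_{L^2}$. This gives $\|\phi_x\phi_{xx}\|_{L^2}\le C\|\phi_{xx}\|_{L^2}^2$, which is integrable in time by \eqref{t-phi-2}. This is exactly why the statement only claims $u_{xx}\in L^1(0,T;L^2)$ rather than $L^2$.

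An alternative for this term is $\phi_x\phi_{xx}=\rho(\mu-\phi^3+\phi)\phi_x$. By \eqref{mu00}, $\|\mu\|_{L^\infty}\le \|\mu_x\|_{L^2}+C$, and $\|\phi_x\|_{L^2}$ is bounded by \eqref{0}. This bounds the term by $C(\|\mu_x\|_{L^2}+1)$, which is even in $L^2(0,T)$. I would use this second route.

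\textbf{Step 2: estimate for $\rho_x$.} Differentiate the continuity equation in $x$, multiply by $\rho_x$, and integrate. This gives
\begin{align*}
\frac{\rm d}{{\rm d}t}\|\rho_x\|_{L^2}^2 \le C\|u_x\|_{L^\infty}\|\rho_x\|_{L^2}^2 + C\|\rho\|_{L^\infty}\|u_{xx}\|_{L^2}\|\rho_x\|_{L^2}.
\end{align*}
I then use $\|u_x\|_{L^\infty}\le C\|u_x\|_{L^2}+C\|u_{xx}\|_{L^2}$ and insert the Step 1 bound on $\|u_{xx}\|_{L^2}$. Dividing by $\|\rho_x\|_{L^2}$ (or working with $\sqrt{\|\rho_x\|^2+1}$) yields
\begin{align*}
\frac{\rm d}{{\rm d}t}\|\rho_x\|_{L^2} \le C\bigl(1+\|\sqrt\rho u_t\|_{L^2}+\|u_x\|_{L^2}+\|\mu_x\|_{L^2}\bigr)\|\rho_x\|_{L^2} + C\bigl(\|\sqrt\rho u_t\|_{L^2}+\|\mu_x\|_{L^2}+1\bigr).
\end{align*}
The coefficient lies in $L^1(0,T)$ by \eqref{0} and \eqref{u-x}. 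Gronwall's inequality therefore gives $\sup_t\|\rho_x\|_{L^2}\le C(T)$, using $\rho_0\in H^1$.

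\textbf{Step 3: conclusions.} Feeding the bound on $\rho_x$ back into Step 1 gives $\int_0^T\|u_{xx}\|_{L^2}\,{\rm d}t\le C$; in fact it gives even $L^2$ in time. Finally, $\rho_t=-u\rho_x-\rho u_x$ gives $\|\rho_t\|_{L^2}\le \|u_x\|_{L^2}\|\rho_x\|_{L^2}+C\|u_x\|_{L^2}\le C$.

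\textbf{Main obstacle.} The main difficulty is controlling the capillary term $\phi_x\phi_{xx}$ without any weight $\rho^{-1}$. This is handled through the identity $\phi_{xx}=\rho(\phi^3-\phi-\mu)$ combined with the $L^\infty$ bound \eqref{mu00} on $\mu$.
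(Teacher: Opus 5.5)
\textbf{Gap in Step 2.} Step 2 does not close as written. You bound $\|u_x\|_{L^\infty}\le C\|u_x\|_{L^2}+C\|u_{xx}\|_{L^2}$ and then insert the Step 1 bound. But $\|u_{xx}\|_{L^2}$ contains $C\|\rho_x\|_{L^2}$, which comes from the pressure $(\rho^\gamma)_x$. In the term $\frac32\int_I\rho_x^2u_x\,{\rm d}x\le C\|u_x\|_{L^\infty}\|\rho_x\|_{L^2}^2$ this produces $C\|\rho_x\|_{L^2}^3$. The inequality your argument actually yields is
\begin{align*}
\frac{\rm d}{{\rm d}t}\|\rho_x\|_{L^2}\le C\bigl(1+\|\sqrt{\rho}u_t\|_{L^2}+\|\mu_x\|_{L^2}+\|\rho_x\|_{L^2}\bigr)\|\rho_x\|_{L^2}+C\bigl(\|\sqrt{\rho}u_t\|_{L^2}+\|\mu_x\|_{L^2}+1\bigr).
\end{align*}
This is Riccati-type: the coefficient you wrote silently drops $\|\rho_x\|_{L^2}$. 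With that term present, Gronwall only gives a bound on a short interval depending on $\|\rho_{0x}\|_{L^2}$, not on an arbitrary $[0,T]$. Interpolating $\|u_x\|_{L^\infty}\le C\|u_x\|_{L^2}^{1/2}\|u_{xx}\|_{L^2}^{1/2}$ does not help either, since it still leaves $\|\rho_x\|_{L^2}^{5/2}$.

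\textbf{The missing idea.} Estimate $\|u_x\|_{L^\infty}$ through the effective viscous flux $u_x-\rho^\gamma$ rather than through $u_{xx}$, as the paper does in \eqref{u-wq}. Since
\begin{align*}
(u_x-\rho^\gamma)_x=\rho u_t+\rho uu_x+\phi_x\phi_{xx}
\end{align*}
contains no derivative of $\rho$, you get
\begin{align*}
\|u_x\|_{L^\infty}\le\|u_x-\rho^\gamma\|_{L^1}+\|(u_x-\rho^\gamma)_x\|_{L^1}+C\le C\bigl(\|\sqrt{\rho}u_t\|_{L^2}+\|\mu_x\|_{L^2}+1\bigr).
\end{align*}
Here you may use either your treatment of $\phi_x\phi_{xx}$ or the paper's $\|\phi_{xx}\|_{L^2}$. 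This bound is independent of $\rho_x$ and lies in $L^2(0,T)$.

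The other term, $\int_I\rho\rho_xu_{xx}\,{\rm d}x$, is only quadratic in $\|\rho_x\|_{L^2}$ with your Step 1 bound. If you substitute the momentum equation instead, its pressure part $-\gamma\int_I\rho^\gamma\rho_x^2\,{\rm d}x$ even has a good sign. So the differential inequality becomes linear and Gronwall gives $\sup_t\|\rho_x\|_{L^2}\le C(T)$.

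With this repair, your Steps 1 and 3 go through. Your alternative handling of the capillary term via \eqref{mu00} is valid and indeed yields $u_{xx}\in L^2(0,T;L^2)$. Note, however, that the correct identity is $\phi_x\phi_{xx}=-\rho(\mu-\phi^3+\phi)\phi_x$ (the sign does not matter for the estimate).
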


\begin{proof}[\bf Proof]
Differentiating \eqref{nsch-1d}$_{1}$ with respect to $x$, testing the resultant equation by $\rho_{x}$, and then integrating over $I$ by parts, we have
\begin{align}\label{dt-3}
\frac{1}{2} \frac{\rm d}{{\rm d}t}  \| \rho_{x}\|_{L^{2}}^{2} &  = -\frac{3}{2} \int_{I} \rho_{x}^{2} u_{x} {\rm d}x - \int_{I}  \rho \rho_{x} \left[ \rho u_{t} + \rho u u_{x} + (\rho^{\gamma})_{x} + \phi_{x} \phi_{xx} \right] {\rm d}x
\nonumber \\
& \le C \| u_{x}\|_{L^{\infty}} \| \rho_{x}\|_{L^{2}}^{2} + C \| \rho_{x}\|_{L^{2}} \left( \| \sqrt{\rho}u_{t}\|_{L^{2}} + \| u\|_{L^{\infty}}\| u_{x}\|_{L^{2}}  \right)
\nonumber \\
& \quad  + C \| \rho_{x}\|_{L^{2}} \left( \| \rho_{x}\|_{L^{2}} +  \| \phi_{x}\|_{L^{\infty}}\| \phi_{xx}\|_{L^{2}} \right)
\nonumber \\
&  \le C \left(  \| u_{x}\|_{L^{\infty}} + \| \phi_{xx}\|_{L^{2}}^{2} + 1 \right) \| \rho_{x}\|_{L^{2}}^{2} + C  \left(  \| \sqrt{\rho}u_{t}\|_{L^{2}}^{2} +  \| \phi_{xx}\|_{L^{2}}^{2} + 1\right).
\end{align}
To proceed, we note that
\begin{align} \label{u-wq}
\| u_{x}\|_{L^{\infty}} & \le \| (u_{x} - \rho^{\gamma})\|_{L^{\infty}} + \| \rho^{\gamma}\|_{L^{\infty}}
\nonumber \\
& \le C \| (u_{x} - \rho^{\gamma})\|_{L^{1}} + C\| (u_{x} - \rho^{\gamma})_{x}\|_{L^{1}} + C
\nonumber \\
& \le C \|  \rho u_{t} + \rho u u_{x} + \phi_{x} \phi_{xx}   \|_{L^{1}} + C
\nonumber \\
& \le C \left(  \| \sqrt{\rho}u_{t}\|_{L^{2}} +  \| u\|_{L^{2}}  \| u_{x}\|_{L^{2}}  +  \| \phi_{x}\|_{L^{2}}  \| \phi_{xx}\|_{L^{2}} + 1\right)
\nonumber \\
& \le C \left(  \| \sqrt{\rho}u_{t}\|_{L^{2}} +  \| \phi_{xx}\|_{L^{2}} + 1\right),
\end{align}
where we have used \eqref{0}, \eqref{rho-b}, \eqref{u-x} and Sobolev inequality. Substituting \eqref{u-wq} into \eqref{dt-3}, using \eqref{t-phi-2}, \eqref{u-x} and Gronwall's inequality, one deduces
\begin{align*}
\sup\limits_{0\le t \le T} \| \rho_{x}\|_{L^{2}}^{2}  \le C.
\end{align*}
This together with \eqref{nsch-1d}$_{1}$, \eqref{rho-b} and \eqref{u-x} shows
\begin{align*}
\sup\limits_{0\le t \le T} \| \rho_{t}\|_{L^{2}}^{2}  \le C.
\end{align*}
It follows from \eqref{nsch-1d}$_{2}$ that
\begin{align}\label{t-uxx}
\int_{0}^{T} \| u_{xx} \|_{L^{2}} {\rm d}t & \le C \int_{0}^{T} \left( \| \sqrt{\rho} u_{t} \|_{L^{2}} + \| u\|_{L^{\infty}} \| u_{x}\|_{L^{2}} +   \| \rho_{x}\|_{L^{2}} + \| \phi_{x}\|_{L^{\infty}} \| \phi_{xx}\|_{L^{2}} \right) {\rm d}t
\\
& \le C\int_{0}^{T} \left(1+ \| \sqrt{\rho} u_{t} \|_{L^{2}}^{2} + \| u_{x}\|_{L^{2}}^{2} +   \| \rho_{x}\|_{L^{2}}^{2} +  \| \phi_{xx}\|_{L^{2}}^{2} \right) {\rm d}t \le C.
\end{align}
Thus, we complete the proof of Lemma \ref{L-8}.
\end{proof}


\begin{lemma}\label{L-10}
Let $(\rho, u, \phi)$ be the smooth solutions to \eqref{nsch-1d}--\eqref{ic}, then it holds that
\begin{align}\label{mu-1}
\sup\limits_{0\le t \le T} \left( t \| \mu \|_{H^{1}}^{2} + t\| \phi_{xxx} \|_{L^{2}}^{2} \right) + \int_{0}^{T} \left( t\| \phi_{t} \|_{H^{1}}^{2} +  t^{\frac{1}{2}}\| \sqrt{\rho} \phi_{t} \|_{L^{2}}^{2} \right)  {\rm d}t  \le C(T).
\end{align}
\end{lemma}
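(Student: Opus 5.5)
The plan is a time-weighted energy estimate for $\mu_{x}$. It is obtained by testing the phase-field equation \eqref{nsch-1d}$_{3}$ with $\mu_{t}$ and rewriting the worst terms through \eqref{nsch-1d}$_{4}$. The weight $t$ removes the need for any compatibility condition on $\mu_{0}$.

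\textbf{Step 1: identity for $\|\mu_x\|_{L^2}^2$.} Using $\mu_{x}|_{x=0,1}=0$, we have $\frac12\frac{\rm d}{{\rm d}t}\|\mu_{x}\|_{L^{2}}^{2}=-\int_I\mu_{xx}\mu_{t}\,{\rm d}x=-\int_I(\rho\phi_{t}+\rho u\phi_{x})\mu_{t}\,{\rm d}x$. Differentiating \eqref{nsch-1d}$_{4}$ in $t$ gives
\[
\rho\mu_{t}=-\phi_{xxt}-\rho_{t}\mu+\rho_{t}(\phi^{3}-\phi)+\rho(3\phi^{2}-1)\phi_{t}.
\]
Since $\phi_{xt}|_{x=0,1}=0$, the term $-\int_I\phi_{t}\phi_{xxt}\,{\rm d}x$ equals $\|\phi_{xt}\|_{L^{2}}^{2}$ and becomes a dissipation. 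We therefore expect
\begin{align*}
\frac12\frac{\rm d}{{\rm d}t}\|\mu_{x}\|_{L^{2}}^{2}+\|\phi_{xt}\|_{L^{2}}^{2}
={}&\int_I\rho_{t}\bigl(\mu-\phi^{3}+\phi\bigr)\phi_{t}\,{\rm d}x-\int_I\rho(3\phi^{2}-1)\phi_{t}^{2}\,{\rm d}x\\
&+\int_I u\phi_{x}\bigl(\phi_{xxt}+\rho_{t}\mu-\rho_{t}(\phi^{3}-\phi)-\rho(3\phi^{2}-1)\phi_{t}\bigr){\rm d}x .
\end{align*}

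\textbf{Step 2: bounding the right-hand side.} The term with $\phi_{xxt}$ is integrated by parts into $-\int_I(u\phi_{x})_{x}\phi_{xt}\,{\rm d}x$. It is bounded by $\|\phi_{xt}\|_{L^{2}}\bigl(\|u_{x}\|_{L^{2}}\|\phi_{x}\|_{L^{\infty}}+\|u\|_{L^{\infty}}\|\phi_{xx}\|_{L^{2}}\bigr)$. This is absorbed using Lemma \ref{L-7}, leaving $C\|\phi_{xx}\|_{H^{1}}^{2}$.

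For the terms containing $\rho_{t}$ we use $\|\rho_{t}\|_{L^{2}}\le C$ from Lemma \ref{L-8}. We control $\|\phi_{t}\|_{L^{\infty}}$ by \eqref{key} and $\|\mu\|_{L^{\infty}}\le\|\mu_{x}\|_{L^{2}}+C$ by \eqref{mu00}. Young's inequality then absorbs $\|\phi_{xt}\|_{L^{2}}^{2}$ and leaves terms of the form $C(\|\mu_{x}\|_{L^{2}}^{2}+1)\bigl(\|\sqrt\rho\phi_{t}\|_{L^{2}}^{2}+\|\mu_x\|_{L^{2}}^{2}+1\bigr)$.

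Wherever $\phi_{xx}$ or $\phi_{xxx}$ appears, we use \eqref{nsch-1d}$_{4}$ in the forms $\phi_{xx}=\rho(\phi^{3}-\phi)-\rho\mu$ and $\phi_{xxx}=-(\rho\mu)_{x}+(\rho(\phi^{3}-\phi))_{x}$. Together with $\|\rho\|_{H^{1}}\le C$, this gives $\|\phi_{xxx}\|_{L^{2}}\le C(\|\mu\|_{H^{1}}+1)$.

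Multiplying by $t$ produces the extra term $\|\mu_{x}\|_{L^{2}}^{2}$, which is integrable by \eqref{0}. The term $t\|\sqrt\rho\phi_{t}\|_{L^{2}}^{2}$ is integrable by Lemma \ref{L-6}. Gronwall's inequality then yields
\[
\sup_t t\|\mu_{x}\|_{L^{2}}^{2}+\int_0^T t\|\phi_{xt}\|_{L^{2}}^{2}\,{\rm d}t\le C .
\]
The remaining bounds follow quickly. The bound for $t\|\mu\|_{L^{2}}^{2}$ comes from \eqref{mu00}. The bound for $t\|\phi_{xxx}\|_{L^{2}}^{2}$ comes from the formula above for $\phi_{xxx}$. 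The bound for $\int_0^T t\|\phi_{t}\|_{H^{1}}^{2}\,{\rm d}t$ comes from \eqref{key} combined with Lemma \ref{L-6}.

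\textbf{Step 3: the $t^{1/2}$ term.} Testing \eqref{nsch-1d}$_{3}$ by $\phi_{t}$ gives
\[
\|\sqrt\rho\phi_{t}\|_{L^{2}}^{2}=-\int_I\mu_{x}\phi_{xt}\,{\rm d}x-\int_I\rho u\phi_{x}\phi_{t}\,{\rm d}x .
\]
Hence
\[
t^{1/2}\|\sqrt\rho\phi_{t}\|_{L^{2}}^{2}\le t\|\phi_{xt}\|_{L^{2}}^{2}+C\|\mu_{x}\|_{L^{2}}^{2}+C\|u_{x}\|_{L^{2}}^{2}+\tfrac12 t^{1/2}\|\sqrt\rho\phi_{t}\|_{L^{2}}^{2}.
\]
After absorbing the last term, everything on the right is integrable by Step 2 and \eqref{0}.

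\textbf{Main obstacle.} The delicate part is the cubic-type terms involving $\rho_{t}\mu\phi_{t}$ and $u\phi_x\rho_t\mu$. The concern is closing them without any lower bound on $\rho$ and with only the Neumann condition for $\phi$. My first attempt would be to write $\rho_{t}=-(\rho u)_{x}$ and integrate by parts onto $\mu\phi_{t}$ wherever the direct $L^{2}\times L^{\infty}$ bound is not sufficient. The resulting $\mu_{x}$ and $\phi_{xt}$ factors would then be controlled exactly as in Step 2.
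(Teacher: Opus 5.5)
Your argument is essentially the paper's proof. Testing \eqref{nsch-1d}$_{3}$ by $-\mu_{t}$ gives exactly the identity \eqref{dt-5}: your $\rho_{t}$-terms are $G_{1}$ and part of $G_{3}$, rewritten via $\rho^{-1}\phi_{xx}=\phi^{3}-\phi-\mu$. The rest is also the same: the $t$-weighting, the use of \eqref{key}, \eqref{mu00} and Lemma \ref{L-6}, and the final test by $\phi_{t}$ for the $t^{1/2}$ term.

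The only thing to tighten is the handling of the $\rho_{t}$-terms, which close directly without the integration by parts you proposed in your ``main obstacle'' paragraph. The bound $\|\rho_{t}\|_{L^{2}}\|\mu-\phi^{3}+\phi\|_{L^{2}}\|\phi_{t}\|_{L^{\infty}}\le \frac{1}{8}\|\phi_{xt}\|_{L^{2}}^{2}+C\|\sqrt{\rho}\phi_{t}\|_{L^{2}}^{2}+C\|\mu_{x}\|_{L^{2}}^{2}+C$ is linear; this is how the paper uses Lemma \ref{L-5}. You should state it in this linear form rather than as the product $C(\|\mu_{x}\|_{L^{2}}^{2}+1)(\|\sqrt{\rho}\phi_{t}\|_{L^{2}}^{2}+\|\mu_{x}\|_{L^{2}}^{2}+1)$. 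That product would produce a cross term $t\|\mu_{x}\|_{L^{2}}^{2}\|\sqrt{\rho}\phi_{t}\|_{L^{2}}^{2}$, which Gronwall cannot handle because $\|\sqrt{\rho}\phi_{t}\|_{L^{2}}^{2}$ is not known to be integrable near $t=0$.
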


\begin{proof}[\bf Proof]
Testing \eqref{nsch-1d}$_{3}$ by $-\mu_{t}$, using \eqref{nsch-1d}$_{4}$ and integrating over $I$ by parts, one has
\begin{align}\label{dt-5}
 \frac{1}{2} \frac{\rm d}{{\rm d}t}  \| \mu_{x}\|_{L^{2}}^{2}  + \| \phi_{xt} \|_{L^{2}}^{2} & =  \int_{I} \rho \phi_{t} \left( \frac{1}{\rho} \right)_{t} \phi_{xx}  {\rm d}x   - \int_{I}  (3\phi^{2}-1)\rho \phi_{t}^{2}  {\rm d}x
\nonumber \\
& \quad + \int_{I} \rho u \phi_{x} \left( \frac{1}{\rho} \phi_{xx} \right)_{t} {\rm d}x - \int_{I}  (3\phi^{2}-1) \rho u \phi_{x} \phi_{t}  {\rm d}x
\nonumber \\
& =: \sum \limits_{i=1}^{4} G_{i}.
\end{align}
We now estimate each term on the right hand side of \eqref{dt-5} as follows. Firstly, it follows from \eqref{1-rho-b} and \eqref{key} that
\begin{align*}
G_{1} & = - \int_{I} \frac{1}{\rho}  \rho_{t} \phi_{t} \phi_{xx}  {\rm d}x \le \| \phi_{t}\|_{L^{\infty}} \| \rho^{-1} \phi_{xx} \|_{L^{2}}  \| \rho_{t} \|_{L^{2}}
\\
& \le C\| \phi_{t}\|_{H^{1}} \| \rho^{-1} \phi_{xx} \|_{L^{2}} \le C \left( \| \sqrt{\rho}\phi_{t}\|_{L^{2}} + \| \phi_{xt}\|_{L^{2}}\right)  \| \rho^{-1} \phi_{xx} \|_{L^{2}}
\\
& \le \frac{1}{4} \| \phi_{xt}\|_{L^{2}}^{2} + C\| \sqrt{\rho}\phi_{t}\|_{L^{2}}^{2} + C\| \rho^{-1} \phi_{xx} \|_{L^{2}}^{2}.
\end{align*}
Then, noting $-(3\phi^{2}-1 ) \le  1$ and $\rho \phi_{t}^{2} \ge 0$, we have
\begin{align*}
G_{2} =  \int_{I} - (3\phi^{2}-1)\rho \phi_{t}^{2}  {\rm d}x  \le \int_{I}  \rho \phi_{t}^{2}  {\rm d}x = \| \sqrt{\rho}\phi_{t}\|_{L^{2}}^{2}.
\end{align*}
Next, using integration by parts formula and \eqref{u-x}, one has
\begin{align*}
G_{3} & =  - \int_{I}  \frac{1}{\rho} \rho_{t} u \phi_{x}  \phi_{xx}  {\rm d}x + \int_{I}  u \phi_{x}  \phi_{xxt}  {\rm d}x
\\
& = - \int_{I}  \frac{1}{\rho} \rho_{t} u \phi_{x}  \phi_{xx}  {\rm d}x - \int_{I}  u_{x} \phi_{x}  \phi_{xt}  {\rm d}x -\int_{I}  u \phi_{xx}  \phi_{xt}  {\rm d}x
\\
& \le  \| u \|_{L^{\infty}} \| \phi_{x}\|_{L^{\infty}} \| \rho^{-1} \phi_{xx} \|_{L^{2}}  \| \rho_{t} \|_{L^{2}} +  \| u_{x} \|_{L^{2}}  \| \phi_{x}\|_{L^{\infty}}   \| \phi_{xt} \|_{L^{2}}
\\
& \quad + \| u \|_{L^{\infty}}  \| \phi_{xx}\|_{L^{2}}   \| \phi_{xt} \|_{L^{2}}
\\
& \le C\| u_{x} \|_{L^{2}} \| \phi_{xx}\|_{L^{2}} \| \rho^{-1} \phi_{xx} \|_{L^{2}}  + C\| u_{x} \|_{L^{2}} \| \phi_{xx}\|_{L^{2}} \| \phi_{xt} \|_{L^{2}}
\\
& \le \frac{1}{4} \| \phi_{xt}\|_{L^{2}}^{2}  + C\| \phi_{xx}\|_{L^{2}}^{2} + C\| \rho^{-1} \phi_{xx} \|_{L^{2}}^{2}.
\end{align*}
Finally, due to \eqref{0}, \eqref{u-x} and $-(3\phi^{2}-1 ) \le  1$, there holds
\begin{align*}
G_{4} & \le  \int_{I}  \rho u \phi_{x} \phi_{t}  {\rm d}x \le C  \| u \|_{L^{\infty}} \| \sqrt{\rho}\phi_{t}\|_{L^{2}}   \| \phi_{x} \|_{L^{2}}
\\
& \le C \| u_{x} \|_{L^{2}} \| \sqrt{\rho}\phi_{t}\|_{L^{2}} \le C\| \sqrt{\rho}\phi_{t}\|_{L^{2}}^{2} + C.
\end{align*}
Substituting $G_{1}$--$G_{4}$ into \eqref{dt-5}, and then multiplying the result by $t$, yields
\begin{align*}
 \frac{\rm d}{{\rm d}t} \left( t \| \mu_{x}\|_{L^{2}}^{2} \right) + t \| \phi_{xt} \|_{L^{2}}^{2} \le  \| \mu_{x}\|_{L^{2}}^{2} + C t\| \sqrt{\rho}\phi_{t}\|_{L^{2}}^{2}  + C(T)\left(\| \phi_{xx}\|_{L^{2}}^{2} +\| \rho^{-1} \phi_{xx} \|_{L^{2}}^{2} + 1\right).
\end{align*}
Integrating the above inequality over $[0,t]$ and then using \eqref{t-phi-2} and \eqref{e-6}, we have
\begin{align}\label{mu11}
\sup\limits_{0\le t \le T} \left( t\| \mu_{x} \|_{L^{2}}^{2} \right)  + \int_{0}^{T} t\| \phi_{xt} \|_{L^{2}}^{2}  {\rm d}t  \le  C.
\end{align}
Thus, we obtain from \eqref{nsch-1d}$_{3}$ that
\begin{align}\label{pphi-pro}
& \int_{0}^{T} t^{\frac{1}{2}}\| \sqrt{\rho} \phi_{t} \|_{L^{2}}^{2} {\rm d}t
\nonumber \\
 =& - \int_{0}^{T} t^{\frac{1}{2}} \int_{\Omega} \rho u \phi_{x} \phi_{t} {\rm d}x {\rm d}t  - \int_{0}^{T} t^{\frac{1}{2}} \int_{\Omega} \mu_{x}  \phi_{xt} {\rm d}x {\rm d}t
\nonumber \\
 \le& \frac{1}{2}\int_{0}^{T} t^{\frac{1}{2}}\| \sqrt{\rho} \phi_{t} \|_{L^{2}}^{2} {\rm d}t + C \int_{0}^{T} t^{\frac{1}{2}} \| u \|_{L^{\infty}}^{2} \| \phi_{x} \|_{L^{2}}^{2} {\rm d}t
 + \int_{0}^{T}  \| \mu_{x} \|_{L^{2}}^{2} {\rm d}t +  \int_{0}^{T} t \| \phi_{xt} \|_{L^{2}}^{2} {\rm d}t
 \nonumber \\
 \le& \frac{1}{2}\int_{0}^{T} t^{\frac{1}{2}}\| \sqrt{\rho} \phi_{t} \|_{L^{2}}^{2} {\rm d}t + C(T).
\end{align}
It follows from \eqref{nsch-1d}$_{4}$,  \eqref{0}, \eqref{phi-b}, \eqref{rho-b}, \eqref{mu00},  \eqref{1-rho-b} and \eqref{mu11} that
\begin{align*}
\sqrt{t}\| \phi_{xxx} \|_{L^{2}}   \le   \sqrt{t}\| \rho_{x} \|_{L^{2}}  \| \mu \|_{L^{\infty}}  +  \sqrt{t} \| \rho \|_{L^{\infty}}  \| \mu_{x} \|_{L^{2}} + C(T)\| \phi_{x} \|_{L^{2}}  + C(T) \| \rho_{x} \|_{L^{2}} \le  C(T).
\end{align*}
This, together with \eqref{mu00}, \eqref{key}, \eqref{mu11} and \eqref{pphi-pro}, completes the proof of Lemma \ref{L-10}.
\end{proof}

\begin{lemma}\label{L-11}
Let $(\rho, u, \phi)$ be the smooth solutions to \eqref{nsch-1d}--\eqref{ic}, then it holds that
\begin{align}\label{e-11}
\sup\limits_{0\le t \le T} \left(  t^{2}\|  \sqrt{\rho}\phi_{t} \|_{L^{2}}^{2} + t^{2}\|  \phi_{t} \|_{H^{1}}^{2} + t^{2} \|  \mu_{xx}\|_{L^{2}}^{2} \right) + \int_{0}^{T} t^{2} \left( \|  \mu_{t} \|_{H^{1}}^{2}  +  \|  \phi_{xxt} \|_{L^{2}}^{2} \right) {\rm d}t  \le C(T).
\end{align}
\end{lemma}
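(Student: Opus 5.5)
We differentiate \eqref{nsch-1d}$_{3}$ in time, test the result by $\phi_{t}$, and multiply the resulting inequality by $t^{2}$. The time-differentiated equation is
\begin{align*}
\rho \phi_{tt} + \rho u \phi_{xt} + \rho_{t}(\phi_{t} + u\phi_{x}) + \rho u_{t}\phi_{x} = \mu_{xxt}.
\end{align*}
Testing by $\phi_{t}$ and using $\rho_{t}=-(\rho u)_{x}$, the terms $\rho\phi_{tt}\phi_{t}+\rho u\phi_{xt}\phi_{t}$ combine with part of $\rho_{t}\phi_{t}^{2}$ into $\frac12\frac{\rm d}{{\rm d}t}\|\sqrt{\rho}\phi_{t}\|_{L^{2}}^{2}$. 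On the right-hand side we get $-\int \mu_{xt}\phi_{xt}\,{\rm d}x$.

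To make this dissipative, we differentiate \eqref{nsch-1d}$_{4}$ in time:
\begin{align*}
\rho\mu_{t} = -\phi_{xxt} + \rho(3\phi^{2}-1)\phi_{t} - \rho_{t}\big(\mu - (\phi^{3}-\phi)\big).
\end{align*}
We write $-\int\mu_{xt}\phi_{xt} = \int \mu_{t}\phi_{xxt}$, using $\phi_{xt}=0$ on the boundary. Substituting $\phi_{xxt} = -\rho\mu_{t}+\rho(3\phi^{2}-1)\phi_{t}-\rho_{t}(\mu-\phi^{3}+\phi)$ gives the good term $-\|\sqrt{\rho}\mu_{t}\|_{L^{2}}^{2}$, which only degenerates where $\rho$ vanishes.

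Since this is weak in the vacuum region, we will instead keep the dissipation $\|\phi_{xxt}\|_{L^{2}}^{2}$ directly. Using $\mu_{t}=\rho^{-1}(\ldots)$ is dangerous, so the preferred route is as follows. We test the differentiated \eqref{nsch-1d}$_{3}$ by $-\mu_{t}$, which mirrors Lemma \ref{L-10} one derivative higher, or equivalently we work with the pair $(\|\sqrt\rho\phi_t\|^2,\|\mu_x\|^2)$. This produces $\frac{\rm d}{{\rm d}t}\|\sqrt{\rho}\phi_{t}\|^{2}+\|\phi_{xxt}\|^2/\rho$-type control. I will commit to the identity
\begin{align*}
\frac12\frac{\rm d}{{\rm d}t}\|\sqrt{\rho}\phi_{t}\|_{L^{2}}^{2} + \|\rho^{-1/2}\phi_{xxt}\|_{L^{2}}^{2} = \text{(lower order terms)}.
\end{align*}
Here $\|\rho^{-1/2}\phi_{xxt}\|^2\ge C^{-1}\|\phi_{xxt}\|^2$ by \eqref{rho-b}. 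The lower order terms involve $\rho_{t}$, $u_{t}$, $\mu$, $\phi_{t}$.

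The lower order terms are bounded as follows. We use $\|\phi_{t}\|_{L^{\infty}}\le C(\|\sqrt\rho\phi_t\|+\|\phi_{xt}\|)$ from \eqref{key}, $\|\rho_t\|_{L^2}\le C$ from \eqref{1-rho-b}, $\|\mu\|_{L^\infty}\le C(\|\mu_x\|+1)$ from \eqref{mu00}, and $\|\phi_{xt}\|^{2}\le \|\phi_t\|\,\|\phi_{xxt}\|$ via the Neumann condition. The term $\int\rho u_{t}\phi_{x}\phi_{t}$ is controlled by $\|\sqrt\rho u_t\|\,\|\phi_x\|_{L^\infty}\|\sqrt\rho\phi_t\|$, using $\|\phi_x\|_{L^\infty}\le C\|\phi_{xx}\|$. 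Combining, we expect
\begin{align*}
\frac{\rm d}{{\rm d}t}\|\sqrt{\rho}\phi_{t}\|^{2} + c\|\phi_{xxt}\|^{2} \le C\big(1+\|\sqrt\rho u_t\|^2+\|\phi_{xx}\|^2+\|\mu_x\|^2\big)\|\sqrt\rho\phi_t\|^2 + C\|\phi_{xt}\|^{2}(\|\mu_x\|^2+1) + C\|\mu_x\|^2 .
\end{align*}
We then multiply by $t^{2}$. The extra term $2t\|\sqrt\rho\phi_t\|^2$ is integrable by Lemma \ref{L-6}. The term $t^{2}\|\mu_x\|^2\|\phi_{xt}\|^2$ is bounded by $C\,t\|\phi_{xt}\|^2$, using $t\|\mu_x\|^2\le C$ from Lemma \ref{L-10}, and is integrable by \eqref{mu-1}. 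Gronwall then gives $\sup t^2\|\sqrt\rho\phi_t\|^2+\int t^2\|\phi_{xxt}\|^2\le C$, since the Gronwall coefficient is integrable by \eqref{u-x} and \eqref{t-phi-2}.

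The remaining quantities are obtained elliptically. From \eqref{nsch-1d}$_{3}$ we have $\mu_{xx}=\rho\phi_t+\rho u\phi_x$, so $t^{2}\|\mu_{xx}\|^{2}\le C$. Next, $\|\phi_{xt}\|^2\le\|\phi_t\|\,\|\phi_{xxt}\|$ and $\|\phi_t\|_{L^\infty}$ bounds handle $\sup t^2\|\phi_t\|_{H^1}^2$. Specifically, $\phi_{xt}$ is obtained by integrating the time derivative of \eqref{nsch-1d}$_{4}$: $\phi_{xt}(x)=\int_0^x(\rho_t\mu+\rho\mu_t-\ldots)$. Alternatively, we use the bound $t^2\|\phi_{xt}\|^2\le t^2\|\phi_{t}\|\|\phi_{xxt}\|$ only in time-integrated form, combined with the supremum from a parallel estimate. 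Finally, $\mu_t$ is recovered via the same mean-value argument as \eqref{mu00}: $\|\mu_{t}\|_{L^\infty}\le C\|\mu_{xt}\|+|\int\rho\mu_t|$. Here $\mu_{xt}$ comes from the $x$-derivative of $\rho\mu_t$, i.e.\ from $\phi_{xxxt}$-free terms via the identity $\mu_{xt}=(\rho^{-1}\phi_{xx})_{xt}$-type expressions.

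The main obstacle is this last step. Controlling $\int t^2\|\mu_t\|_{H^1}^2$ without dividing by $\rho$ requires rewriting $\mu_{xt}$ through \eqref{nsch-1d}$_{3}$ differentiated in time, namely $\mu_{xxt}=(\rho\phi_t+\rho u\phi_x)_t$. That would need $\rho\phi_{tt}$ in $L^{2}$, which is not available. I would therefore try first to use the dissipation from testing by $-\mu_t$, namely $\|\mu_{xt}\|^2$ appearing directly after integrating by parts. This means carrying the energy $\|\sqrt\rho\phi_t\|^2$ and obtaining $\|\mu_{xt}\|^2$ as the coercive term, then deducing $\phi_{xxt}$ from the differentiated \eqref{nsch-1d}$_{4}$.
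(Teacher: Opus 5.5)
The identity you commit to does not give non-degenerate dissipation, and its remainder is not made of lower-order terms in $\rho_t,u_t,\mu,\phi_t$ as you claim.

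Set $\psi:=\mu-\phi^3+\phi$, so that $\phi_{xx}=-\rho\psi$ by \eqref{nsch-1d}$_4$. Insert $\mu_t=-\rho^{-1}\phi_{xxt}+(3\phi^2-1)\phi_t-\rho^{-1}\rho_t\psi$ into $\int_I\mu_t\phi_{xxt}\,{\rm d}x$. Besides $-\|\rho^{-1/2}\phi_{xxt}\|_{L^2}^2$, this produces the term $-\int_I\rho^{-1}\rho_t\psi\,\phi_{xxt}\,{\rm d}x$. Bounding it against your dissipation needs $\|\rho^{-1/2}\rho_t\|_{L^2}$, which contains $2\|(\sqrt\rho)_x u\|_{L^2}$. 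That quantity is not bounded independently of $\inf\rho$ when only $\rho_0\in H^1$. If $\rho_0$ vanishes linearly at $x_0$ with $u_0(x_0)\neq0$, then $\rho^{-1}\rho_t^2\sim|x-x_0|^{-1}$, and the integral for the regularized data $\rho_{0n}$ grows like $\log n$. Since zeros of $\rho$ are transported, the weight $t^2$ does not help. None of \eqref{key}, \eqref{1-rho-b}, \eqref{mu00} controls the weight $\rho^{-1}\rho_t$.

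In fact, expanding $\phi_{xxt}=-(\rho_t\psi+\rho\psi_t)$ shows that the singular part $\|\rho^{-1/2}\rho_t\psi\|_{L^2}^2$ of your dissipation cancels exactly against this term. So your identity is equivalent to
\[
\frac12\frac{\rm d}{{\rm d}t}\|\sqrt\rho\,\phi_t\|_{L^2}^2+\|\sqrt\rho\,\psi_t\|_{L^2}^2+\int_I\rho_t\psi\psi_t\,{\rm d}x=\cdots,
\]
which is the degenerate $\|\sqrt\rho\,\mu_t\|$-type dissipation you had just rejected. Whatever one extracts from it, $\mu_t$ is not seen in the vacuum region, and the energy does not contain $\|\phi_{xt}\|_{L^2}^2$. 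So this route does not yield $\int_0^T t^2\|\mu_t\|_{H^1}^2\,{\rm d}t$ or $\sup_t t^2\|\phi_{xt}\|_{L^2}^2$. Your remarks about a ``parallel estimate'' and ``$(\rho^{-1}\phi_{xx})_{xt}$-type expressions'' are not arguments for these bounds.

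The fallback in your last paragraph is the right idea, and it is what the paper does. As written, though, it omits the step that makes it work and has the wrong energy.

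Testing \eqref{test-1} by $\mu_t$ and using \eqref{test-2} gives $\frac12\frac{\rm d}{{\rm d}t}\|\phi_{xt}\|_{L^2}^2+\|\mu_{xt}\|_{L^2}^2$ on the left. The energy is $\|\phi_{xt}\|_{L^2}^2$, not $\|\sqrt\rho\,\phi_t\|_{L^2}^2$. The price is two terms containing $\phi_{tt}$: $\int_I\phi_{tt}\rho_t[\mu-(\phi^3-\phi)]\,{\rm d}x$ and $-\int_I\phi_{tt}\rho(3\phi^2-1)\phi_t\,{\rm d}x$.

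These are the real obstacle, and they do not require $\rho\phi_{tt}\in L^2$. In the first, write $\rho_t=-(\rho u)_x$ and integrate by parts in $x$; then integrate both by parts in $t$. This creates a boundary functional $\mathcal{A}(t)$ bounded by $\frac34\|\sqrt\rho\,\phi_t\|_{L^2}^2+\frac14\|\phi_{xt}\|_{L^2}^2+C\|\mu\|_{H^1}^2+C$. It is absorbed after multiplication by $t^2$, since $t\|\mu\|_{H^1}^2\le C$.

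The remaining terms contain $\mu_t$ itself. They are closed with $\|\mu_t\|_{L^\infty}\le\|\mu_{xt}\|_{L^2}+|\int_I\rho\mu_t\,{\rm d}x|$. The bound $|\int_I\rho\mu_t\,{\rm d}x|\le C\|\sqrt\rho\,\phi_t\|_{L^2}+C\|\mu\|_{L^2}+C$ comes from differentiating $\int_I\rho\mu\,{\rm d}x=\int_I\rho(\phi^3-\phi)\,{\rm d}x$ in time.

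Your $\phi_t$-test then serves only as a companion estimate for $\|\sqrt\rho\,\phi_t\|_{L^2}^2$. In it, $\int_I\mu_{xt}\phi_{xt}\,{\rm d}x$ is absorbed into the $\|\mu_{xt}\|_{L^2}^2$ dissipation rather than turned into one. After multiplying the sum by $t^2$ and applying Gronwall, $\phi_{xxt}$ is read off from \eqref{test-2} as $\|\phi_{xxt}\|_{L^2}\le C\|\mu_t\|_{L^\infty}+\cdots$, without dividing by $\rho$. Finally $\mu_{xx}$ follows from \eqref{nsch-1d}$_3$.
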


\begin{proof}[\bf Proof]
The proof will be completed by several steps.

{\bf Step 1. Estimate for $\| \sqrt{\rho}\phi_{t}\|_{L^{2}}$.}

Differentiating \eqref{nsch-1d}$_{3}$ with respect to $t$ yields
\begin{align}\label{test-1}
\rho \phi_{tt} + \rho_{t} \phi_{t} + \rho_{t} u\phi_{x} + \rho u_{t}\phi_{x} + \rho u\phi_{xt}  = \mu_{xxt}.
\end{align}
Testing \eqref{test-1} by $\phi_{t}$, then integrating over $I$ by parts, using \eqref{nsch-1d}$_{1}$, we have
\begin{align*}
 \frac{1}{2} \frac{\rm d}{{\rm d}t} \| \sqrt{\rho} \phi_{t} \|_{L^{2}}^{2} & =  -2 \int_{I} \rho u \phi_{t} \phi_{xt}  {\rm d}x  + \int_{I} \rho u_{x} u \phi_{x} \phi_{t}  {\rm d}x + \int_{I} \rho_{x} u^{2} \phi_{x} \phi_{t}  {\rm d}x
 \nonumber \\
& \quad - \int_{I} \rho u_{t} \phi_{x} \phi_{t}  {\rm d}x - \int_{I} \mu_{xt} \phi_{xt}  {\rm d}x
\nonumber \\
& \le C \| u \|_{L^{\infty}} \| \sqrt{\rho} \phi_{t} \|_{L^{2}} \| \phi_{xt} \|_{L^{2}} + \| u \|_{L^{\infty}} \| u_{x} \|_{L^{2}} \| \sqrt{\rho} \phi_{t} \|_{L^{2}} \| \phi_{x} \|_{L^{\infty}}
\nonumber \\
& \quad + \| \rho_{x} \|_{L^{2}}  \| u \|_{L^{\infty}}^{2} \|  \phi_{t} \|_{L^{\infty}} \| \phi_{x} \|_{L^{2}} + \| \mu_{xt} \|_{L^{2}}  \| \phi_{xt} \|_{L^{2}}
\nonumber \\
& \quad  + \| \sqrt{\rho} u_{t} \|_{L^{2}} \| \sqrt{\rho} \phi_{t} \|_{L^{2}} \| \phi_{x} \|_{L^{\infty}}
\nonumber \\
& \le  \frac{1}{12}  \| \mu_{xt} \|_{L^{2}}^{2} +   C \| \phi_{xx} \|_{L^{2}}^{2}  \left( \| \sqrt{\rho} \phi_{t} \|_{L^{2}}^{2} +   \| \phi_{xt} \|_{L^{2}}^{2} \right) +C \| \sqrt{\rho} u_{t} \|_{L^{2}}^{2} + C,
\end{align*}
which implies that
\begin{align}\label{dt-5.5}
 \frac{1}{2} \frac{\rm d}{{\rm d}t} \left(t^{2}\| \sqrt{\rho} \phi_{t} \|_{L^{2}}^{2} \right) & \le  \frac{1}{12} t^{2}\| \mu_{xt} \|_{L^{2}}^{2} + 2 t \| \sqrt{\rho} \phi_{t} \|_{L^{2}}^{2} + C(T)\| \sqrt{\rho} u_{t} \|_{L^{2}}^{2} + C(T)
 \nonumber \\
 & \quad +   C   \| \phi_{xx} \|_{L^{2}}^{2} \left( t^{2}\| \sqrt{\rho} \phi_{t} \|_{L^{2}}^{2} +  t^{2}\| \phi_{xt} \|_{L^{2}}^{2} \right).
\end{align}

{\bf Step 2. Estimate for $\| \phi_{xt}\|_{L^{2}}$.}

Differentiating \eqref{nsch-1d}$_{4}$ with respect to $t$ yields
\begin{align}\label{test-2}
\rho \mu_{t} + \rho_{t} \mu = -\phi_{xxt} + \rho_{t} (\phi^{3}-\phi) + \rho (3\phi^{2}-1)\phi_{t}.
\end{align}
Testing \eqref{test-1} by $\mu_{t}$, then integrating over $I$ by parts, using \eqref{test-2}, we get
\begin{align}\label{dt-6}
& \frac{1}{2} \frac{\rm d}{{\rm d}t} \|  \phi_{xt} \|_{L^{2}}^{2} + \|  \mu_{xt} \|_{L^{2}}^{2}
\nonumber \\
& = - \int_{I} \rho_{t} \phi_{t} \mu_{t}  {\rm d}x - \int_{I} \rho_{t} u \phi_{x} \mu_{t}  {\rm d}x - \int_{I} \rho u_{t} \phi_{x} \mu_{t}  {\rm d}x - \int_{I} \rho u \phi_{xt} \mu_{t}  {\rm d}x
\nonumber \\
& \quad + \int_{I} \phi_{tt} \rho_{t} [ \mu- (\phi^{3}-\phi)  ]  {\rm d}x - \int_{I} \phi_{tt} \rho (3\phi^{2}-1) \phi_{t}  {\rm d}x
\nonumber \\
& =: \sum\limits_{i=1}^{6} H_{i}.
\end{align}
We are going to estimate each term on the right hand side of \eqref{dt-6} as follows. For $i=1,\cdots,4$, direct calculation shows that
\begin{align*}
H_{1} & \le \| \rho_{t} \|_{L^{2}}\| \phi_{t} \|_{L^{2}} \| \mu_{t} \|_{L^{\infty}}  \le \varepsilon \| \mu_{t} \|_{L^{\infty}}^{2} + C \left( \| \sqrt{\rho} \phi_{t} \|_{L^{2}}^{2} +   \| \phi_{xt} \|_{L^{2}}^{2} \right),
\\[1em]
H_{2} &\le  \| \rho_{t} \|_{L^{2}}   \| u \|_{L^{\infty}} \| \phi_{x} \|_{L^{2}} \| \mu_{t} \|_{L^{\infty}} \le \varepsilon \| \mu_{t} \|_{L^{\infty}}^{2} + C,
\\[1em]
H_{3} &\le C \| \sqrt{\rho} u_{t} \|_{L^{2}}  \| \phi_{x} \|_{L^{2}} \| \mu_{t} \|_{L^{\infty}} \le \varepsilon \| \mu_{t} \|_{L^{\infty}}^{2} + C\| \sqrt{\rho} u_{t} \|_{L^{2}}^{2},
\\[1em]
H_{4} &\le C \| u \|_{L^{2}}  \| \phi_{xt} \|_{L^{2}} \| \mu_{t} \|_{L^{\infty}} \le \varepsilon \| \mu_{t} \|_{L^{\infty}}^{2} + C\| \phi_{xt} \|_{L^{2}}^{2},
\end{align*}
where $\varepsilon$ is sufficiently small and will be specified later. It follows from \eqref{nsch-1d}$_{1}$ and integration by parts formula that
\begin{align*}
H_{5} & = -  \int_{I} \phi_{tt} (\rho u)_{x} [ \mu- (\phi^{3}-\phi)  ]  {\rm d}x
\\
& = \int_{I} \phi_{xtt} \rho u [ \mu- (\phi^{3}-\phi)  ]  {\rm d}x + \int_{I} \phi_{tt} \rho u [ \mu_{x}- (3\phi^{2}-1)\phi_{x}  ]  {\rm d}x
\\
& = \frac{\rm d}{{\rm d}t} \int_{I} \phi_{xt} \rho u [ \mu- (\phi^{3}-\phi)  ]  {\rm d}x - \int_{I} \phi_{xt} \rho_{t} u [ \mu- (\phi^{3}-\phi)  ]  {\rm d}x
\\
& \quad - \int_{I} \phi_{xt} \rho u_{t} [ \mu- (\phi^{3}-\phi)  ]  {\rm d}x - \int_{I} \phi_{xt} \rho u [ \mu_{t}- (3\phi^{2}-1) \phi_{t} ]  {\rm d}x
\\
& \quad +\frac{\rm d}{{\rm d}t} \int_{I} \phi_{t} \rho u [ \mu_{x}- (3\phi^{2}-1)\phi_{x}  ]  {\rm d}x - \int_{I} \phi_{t} \rho_{t} u [ \mu_{x}- (3\phi^{2}-1)\phi_{x}  ]  {\rm d}x
\\
& \quad -  \int_{I} \phi_{t} \rho u_{t} [ \mu_{x}- (3\phi^{2}-1)\phi_{x}  ]  {\rm d}x -\int_{I} \phi_{t} \rho u [ \mu_{xt}- (3\phi^{2}-1)\phi_{xt} - 6\phi \phi_{t} \phi_{x} ]  {\rm d}x
\\
& \le \frac{\rm d}{{\rm d}t} \int_{I} \phi_{xt} \rho u [ \mu- (\phi^{3}-\phi)  ]  {\rm d}x + \frac{\rm d}{{\rm d}t} \int_{I} \phi_{t} \rho u [ \mu_{x}- (3\phi^{2}-1)\phi_{x}  ]  {\rm d}x
\\
& \quad + C\| \phi_{xt} \|_{L^{2}} \left( \| \rho_{t} \|_{L^{2}} \| u \|_{L^{\infty}} + \| \sqrt{\rho} u_{t} \|_{L^{2}} \right) \left( \| \mu \|_{L^{\infty}} + \| \phi^{3} - \phi \|_{L^{\infty}} \right)
\\
& \quad + C\| \phi_{xt} \|_{L^{2}} \| u \|_{L^{2}} \left( \| \mu_{t} \|_{L^{\infty}} +  \| 3\phi^{2} - 1\|_{L^{\infty}} \| \phi_{t} \|_{L^{\infty}} \right)
\\
& \quad + C \| \phi_{t} \|_{L^{\infty}}  \left( \| \rho_{t} \|_{L^{2}} \| u \|_{L^{\infty}} + \| \sqrt{\rho} u_{t} \|_{L^{2}} \right) \left( \| \mu_{x} \|_{L^{2}} +  \| 3\phi^{2} - 1\|_{L^{\infty}} \| \phi_{x} \|_{L^{2}} \right)
\\
& \quad + C \| \phi_{t} \|_{L^{\infty}} \| u \|_{L^{2}} \left( \| \mu_{xt} \|_{L^{2}} +  \| 3\phi^{2} - 1\|_{L^{\infty}} \| \phi_{xt} \|_{L^{2}}  +  \| \phi\|_{L^{\infty}}  \| \phi_{x} \|_{L^{2}} \| \phi_{t} \|_{L^{\infty}} \right)
\\
& \le \frac{\rm d}{{\rm d}t} \int_{I} \phi_{xt} \rho u [ \mu- (\phi^{3}-\phi)  ]  {\rm d}x + \frac{\rm d}{{\rm d}t} \int_{I} \phi_{t} \rho u [ \mu_{x}- (3\phi^{2}-1)\phi_{x}  ]  {\rm d}x +C\| \mu \|_{H^{1}}^{2}
\\
& \quad + \frac{1}{6} \| \mu_{xt} \|_{L^{2}}^{2} + \varepsilon \| \mu_{t} \|_{L^{\infty}}^{2} +  C(1+ \| \sqrt{\rho} u_{t} \|_{L^{2}}^{2} )\left(  \| \sqrt{\rho} \phi_{t} \|_{L^{2}}^{2} + \| \phi_{xt} \|_{L^{2}}^{2} \right)   + C.
\end{align*}
Similarly, one has
\begin{align*}
H_{6} & = -\frac{1}{2} \frac{\rm d}{{\rm d}t} \int_{I} \phi_{t}^{2} \rho   (3\phi^{2}-1)    {\rm d}x - \frac{1}{2} \int_{I} \phi_{t}^{2} \rho_{t}   (3\phi^{2}-1)    {\rm d}x - 3 \int_{I} \phi_{t}^{3} \rho \phi   {\rm d}x
\\
& \le -\frac{1}{2} \frac{\rm d}{{\rm d}t} \int_{I} \phi_{t}^{2} \rho   (3\phi^{2}-1)    {\rm d}x  + C  \| \phi_{t} \|_{L^{\infty}}^{2}  \| \rho_{t} \|_{L^{2}} \| 3\phi^{2} - 1\|_{L^{2}}
\\
& \quad + C \| \sqrt{\rho}\phi_{t} \|_{L^{2}}^{2}  \| \phi_{t} \|_{L^{\infty}} \| \phi\|_{L^{\infty}}
\\
& \le -\frac{1}{2} \frac{\rm d}{{\rm d}t} \int_{I} \phi_{t}^{2} \rho   (3\phi^{2}-1)    {\rm d}x + C \left( \| \sqrt{\rho}\phi_{t} \|_{L^{2}}^{2} +  \| \phi_{xt} \|_{L^{2}}^{2} \right)
\\
& \quad + Ct \| \sqrt{\rho}\phi_{t} \|_{L^{2}}^{4}  +  \frac{C}{t} \left( \| \sqrt{\rho}\phi_{t} \|_{L^{2}}^{2} +  \| \phi_{xt} \|_{L^{2}}^{2} \right).
\end{align*}
Substituting $H_{1}$--$H_{6}$ into \eqref{dt-6}, we arrive at
\begin{align}\label{dt-6-p}
& \frac{1}{2} \frac{\rm d}{{\rm d}t} \|  \phi_{xt} \|_{L^{2}}^{2} + \|  \mu_{xt} \|_{L^{2}}^{2}
\nonumber \\
& \le \frac{\rm d}{{\rm d}t} \mathcal{A}(t)  +  5\varepsilon \| \mu_{t} \|_{L^{\infty}}^{2} + \frac{1}{6} \| \mu_{xt} \|_{L^{2}}^{2} + C \| \sqrt{\rho} u_{t} \|_{L^{2}}^{2}  +  \frac{C}{t} \left( \| \sqrt{\rho}\phi_{t} \|_{L^{2}}^{2} +  \| \phi_{xt} \|_{L^{2}}^{2} \right)
\nonumber \\
& \quad + C \left(1+ \| \sqrt{\rho} u_{t} \|_{L^{2}}^{2} + t \| \sqrt{\rho}\phi_{t} \|_{L^{2}}^{2}  \right) \left( \| \sqrt{\rho}\phi_{t} \|_{L^{2}}^{2} +  \| \phi_{xt} \|_{L^{2}}^{2} \right)  +C\| \mu \|_{H^{1}}^{2} + C,
\end{align}
where
\begin{align}
\mathcal{A}(t) & := \int_{I} \phi_{xt} \rho u [ \mu- (\phi^{3}-\phi)  ]  {\rm d}x
\nonumber \\
& \quad +  \int_{I} \phi_{t} \rho u [ \mu_{x}- (3\phi^{2}-1)\phi_{x}  ]  {\rm d}x -\frac{1}{2} \int_{I} \phi_{t}^{2} \rho   (3\phi^{2}-1)    {\rm d}x.
\end{align}

{\bf Step 3. Estimate for $\| \mu_{t}\|_{L^{\infty}}$.}

In order to use Gronwall's inequality for \eqref{dt-6-p}, we need to estimate  $\| \mu_{t}\|_{L^{\infty}}$. Note that
\begin{align*}
\left | \int_{I} \rho \mu_{t} {\rm d}x \right | & =  \left | - \int_{I} \rho_{t} \mu {\rm d}x  + \int_{I} \rho_{t} (\phi^{3}-\phi) {\rm d}x + \int_{I} \rho (3\phi^{2}-1) \phi_{t}  {\rm d}x \right |
\\
& \le \| \rho_{t} \|_{L^{2}} \| \mu \|_{L^{2}} + \| \rho_{t} \|_{L^{2}} \| \phi^{3} -\phi \|_{L^{2}} + C \| 3\phi^{2} -1 \|_{L^{2}} \| \sqrt{\rho} \phi_{t} \|_{L^{2}}
\\
& \le C \| \sqrt{\rho} \phi_{t} \|_{L^{2}} + C \| \mu \|_{L^{2}} + C.
\end{align*}
Thus, similar to \eqref{mu00}, we have
\begin{align}\label{mu-t-00}
\| \mu_{t}\|_{L^{\infty}} \le   \| \mu_{xt}\|_{L^{2}}  + \left |  \int_{I} \rho \mu_{t} {\rm d}x  \right | \le  \| \mu_{xt}\|_{L^{2}} + C \| \sqrt{\rho} \phi_{t} \|_{L^{2}} + C \| \mu \|_{L^{2}} + C.
\end{align}
Putting \eqref{mu-t-00} into \eqref{dt-6-p},  and then multiplying the result by $t^{2}$, we obtain
\begin{align}\label{dt-6-pp}
& \frac{1}{2} \frac{\rm d}{{\rm d}t} \left(t^{2} \|  \phi_{xt} \|_{L^{2}}^{2} \right) + t^{2} \|  \mu_{xt} \|_{L^{2}}^{2}
\nonumber \\
& \le \frac{\rm d}{{\rm d}t} \left( t^{2} \mathcal{A}(t) \right) -  2t \mathcal{A}(t) +  5\varepsilon t^{2}\| \mu_{xt} \|_{L^{2}}^{2} + \frac{1}{6} t^{2}\| \mu_{xt} \|_{L^{2}}^{2}
\nonumber \\
& \quad  + C(T) \left(1+ \| \sqrt{\rho} u_{t} \|_{L^{2}}^{2} + \| \mu \|_{H^{1}}^{2} +  t\| \sqrt{\rho}\phi_{t} \|_{L^{2}}^{2} + t \|  \phi_{xt} \|_{L^{2}}^{2} \right)
\nonumber \\
& \quad + C \left(1+ \| \sqrt{\rho} u_{t} \|_{L^{2}}^{2} + t \| \sqrt{\rho}\phi_{t} \|_{L^{2}}^{2}  \right) \left( t^{2}\| \sqrt{\rho}\phi_{t} \|_{L^{2}}^{2} + t^{2}\| \phi_{xt} \|_{L^{2}}^{2} \right).
\end{align}

{\bf Step 4. Closure of the estimates.}

Multiplying \eqref{dt-5.5} by $2$, adding the result to \eqref{dt-6-pp}, substituting \eqref{mu-t-00} into \eqref{dt-6-p}, choosing $\varepsilon$ sufficiently small such that $5\varepsilon< 1/ 6$, we have
\begin{align}\label{gj}
& t^{2}\| \sqrt{\rho}\phi_{t}  \|_{L^{2}}^{2}  + \frac{1}{2}  t^{2}\|  \phi_{xt} \|_{L^{2}}^{2} +  \frac{1}{2} \int_{0}^{t} s^{2}\|  \mu_{xt} \|_{L^{2}}^{2} {\rm d}s
\nonumber \\
 \le&   t^{2} \mathcal{A}(t) -  2 \int_{0}^{t} s \mathcal{A}(s) {\rm d}s + C(T)
\nonumber \\
& + C \int_{0}^{t} \left(1+\| \phi_{xx} \|_{L^{2}}^{2} + \| \sqrt{\rho} u_{t} \|_{L^{2}}^{2} + s\| \sqrt{\rho}\phi_{t} \|_{L^{2}}^{2}  \right) \left( s^{2}\| \sqrt{\rho}\phi_{t} \|_{L^{2}}^{2} + s^{2} \| \phi_{xt} \|_{L^{2}}^{2} \right) (s) {\rm d}s.
\end{align}
Note that $-(3\phi^{2}-1) \le 1$. Hence, using \eqref{0}, \eqref{phi-b}, \eqref{u-x} and \eqref{mu-1}, one has
\begin{align}\label{a(t)}
\mathcal{A}(t) &\le \frac{1}{2} \| \sqrt{\rho}\phi_{t} \|_{L^{2}}^{2} + \int_{I} \phi_{xt} \rho u [ \mu- (\phi^{3}-\phi)  ]  {\rm d}x +  \int_{I} \phi_{t} \rho u [ \mu_{x}- (3\phi^{2}-1)\phi_{x}  ]  {\rm d}x
\nonumber \\
& \le \frac{3}{4} \| \sqrt{\rho}\phi_{t} \|_{L^{2}}^{2} +  \frac{1}{4} \|  \phi_{xt}  \|_{L^{2}}^{2} + C\| u \|_{L^{\infty}}^{2} \left( \| \mu \|_{H^{1}}^{2} + \| \phi^{3} - \phi \|_{L^{2}}^{2} + \| 3\phi^{2} - 1 \|_{L^{\infty}}^{2} \| \phi_{x} \|_{L^{2}}^{2} \right)
\nonumber \\
& \le \frac{3}{4} \| \sqrt{\rho}\phi_{t} \|_{L^{2}}^{2} +  \frac{1}{4} \|  \phi_{xt}  \|_{L^{2}}^{2} + C \| \mu \|_{H^{1}}^{2}+ C.
\end{align}
Substituting \eqref{a(t)} into \eqref{gj}, using Gronwall's inequality and \eqref{mu-t-00}, then utilizing \eqref{key}, as well as using \eqref{nsch-1d}$_{3,4}$,  we complete the proof of Lemma \ref{L-11}.
\end{proof}

\begin{lemma}\label{L-12}
Let $(\rho, u, \phi)$ be the smooth solutions to \eqref{nsch-1d}--\eqref{ic}, then it holds that
\begin{align}
\sup\limits_{0\le t \le T} \left( t \| \sqrt{\rho} u_{t} \|_{L^{2}}^{2} + t \| u_{xx} \|_{L^{2}}^{2} \right) + \int_{0}^{T}  (t \| u_{t} \|_{H^{1}}^{2} + t^{2} \| u_{xxx} \|_{L^{2}}^{2}) {\rm d}t  \le C(T).
\end{align}
\end{lemma}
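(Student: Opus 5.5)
We will follow the standard time-weighted scheme for the momentum equation. We differentiate \eqref{nsch-1d}$_{2}$ in $t$, test with $u_t$, and multiply by $t$. The time-weighted bound for $u_{xx}$ and the spatial bound for $u_{xxx}$ are then recovered elliptically from the equation.

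\emph{Step 1 (differentiated momentum equation).} Differentiating \eqref{nsch-1d}$_{2}$ in $t$ gives
\begin{align*}
\rho u_{tt} + \rho u u_{xt} - u_{xxt} = -\rho_t(u_t + u u_x) - \rho u_t u_x - (\rho^{\gamma})_{xt} - (\phi_x\phi_{xx})_t .
\end{align*}
We test this with $u_t$ and use \eqref{nsch-1d}$_{1}$ to absorb the transport part. This yields
\begin{align*}
\frac12\frac{\rm d}{{\rm d}t}\|\sqrt\rho u_t\|_{L^2}^2+\|u_{xt}\|_{L^2}^2 = -2\int_I\rho u u_t u_{xt}\,{\rm d}x - \int_I\rho u_t^2u_x\,{\rm d}x + \cdots
\end{align*}
The remaining terms are handled as follows.
\begin{itemize}
\item Terms containing $\rho_t=-(\rho u)_x$ are integrated by parts, so that no $\rho_t$ multiplies $u_t^2$ without a derivative falling on $u_t$. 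They are bounded by $\|u\|_{L^\infty}$, $\|u_x\|_{L^\infty}$, $\|\rho_x\|_{L^2}$ (Lemma \ref{L-8}) and $\|u_x\|_{L^2}$ (Lemma \ref{L-7}).
\item The pressure term becomes $\int_I(\rho^\gamma)_t u_{xt}\,{\rm d}x \le C\|\rho_t\|_{L^2}\|u_{xt}\|_{L^2}$.
\item The capillary term is rewritten after integrating by parts as $\int_I \phi_x\phi_{xt}u_{xt}\,{\rm d}x + \int_I \phi_{xt}\phi_{xx}u_t\,{\rm d}x$. We bound it by $C\|\phi_{xt}\|_{L^2}(\|\phi_{xx}\|_{H^1}\|u_{xt}\|_{L^2})$.
\end{itemize}
Here $\|u_t\|_{L^\infty}\le C\|u_{xt}\|_{L^2}$ holds because $u_t$ vanishes on the boundary. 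The singular factor $\|\phi_{xt}\|_{L^2}^2\|\phi_{xx}\|_{H^1}^2$ is the delicate one. With the weight $t$ it becomes $t\|\phi_{xt}\|_{L^2}^2\cdot\|\phi_{xx}\|_{H^1}^2$. By Lemma \ref{L-10}, $\|\phi_{xx}\|_{H^1}^2\le C/t$ via $t\|\phi_{xxx}\|^2_{L^2}\le C$. Hence the factor is at most $C\|\phi_{xt}\|_{L^2}^2$. Integrability of this bound is exactly where we expect trouble: we only know $\int_0^T t\|\phi_{xt}\|_{L^2}^2\,{\rm d}t$.

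\emph{Main obstacle.} To avoid the loss of a power of $t$, we plan to estimate more carefully. We write $\phi_x\phi_{xx}=\rho\mu\phi_x-\rho(\phi^3-\phi)\phi_x$ via \eqref{nsch-1d}$_4$, as in the term $B_3$ of Lemma \ref{L-7}, and differentiate in $t$. This gives terms such as $\rho_t\mu\phi_x u_t$, $\rho\mu_t\phi_x u_t$ and $\rho\mu\phi_{xt}u_t$, which we bound by the following quantities:
\begin{itemize}
\item $\|\mu\|_{L^\infty}\le C(\|\mu_x\|_{L^2}+1)$ from \eqref{mu00};
\item $\|\mu_t\|_{L^\infty}$ from \eqref{mu-t-00};
\item $\|\sqrt\rho u_t\|_{L^2}$.
\end{itemize}
After multiplying by $t$, the resulting bounds are controlled by $t\|\mu_x\|_{L^2}^2\le C$, by $\int_0^T t\|\phi_{xt}\|_{L^2}^2\,{\rm d}t$, and by $t\|\mu_{xt}\|_{L^2}^2$. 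The last is handled through Young's inequality as $t\|\mu_{xt}\|_{L^2}\|\sqrt\rho u_t\|_{L^2}\le t^2\|\mu_{xt}\|_{L^2}^2+\|\sqrt\rho u_t\|_{L^2}^2$, whose two pieces are integrable by Lemma \ref{L-11} and Lemma \ref{L-7}.

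\emph{Step 2 (Gronwall).} We multiply the differential inequality by $t$ and integrate, using $\int_0^T\|\sqrt\rho u_t\|_{L^2}^2\,{\rm d}t\le C$ for the term $\|\sqrt\rho u_t\|_{L^2}^2$ produced by $\frac{\rm d}{{\rm d}t}t$. Gronwall's inequality with coefficient $C(1+\|u_x\|_{L^\infty}+\|\sqrt\rho u_t\|_{L^2}^2)\in L^1$ then gives
\begin{align*}
\sup_t t\|\sqrt\rho u_t\|_{L^2}^2+\int_0^T t\|u_{xt}\|_{L^2}^2\,{\rm d}t\le C.
\end{align*}
The bound $\|u_x\|_{L^\infty}\in L^1$ comes from \eqref{u-wq}. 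Because $u_t$ satisfies the Dirichlet condition, Poincar\'e's inequality converts the $u_{xt}$ bound into the $H^1$ bound for $u_t$.

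\emph{Step 3 (elliptic bounds).} From \eqref{nsch-1d}$_2$,
\begin{align*}
\|u_{xx}\|_{L^2}\le C\left(\|\sqrt\rho u_t\|_{L^2}+\|u_x\|_{L^2}^2+\|\rho_x\|_{L^2}+\|\phi_{xx}\|_{L^2}\right),
\end{align*}
and Lemma \ref{L-6} gives $\sqrt t\|\phi_{xx}\|_{L^2}\le C$. This yields $\sup_t t\|u_{xx}\|_{L^2}^2\le C$. We then differentiate \eqref{nsch-1d}$_2$ in $x$ to obtain
\begin{align*}
u_{xxx}=(\rho u_t+\rho uu_x+(\rho^\gamma)_x+\phi_x\phi_{xx})_x .
\end{align*}
Each term is bounded in $L^2$ using:
\begin{itemize}
\item $\|\rho_x\|_{L^2}\|u_t\|_{L^\infty}$ and $\|u_{xt}\|_{L^2}$;
\item the term $\|\rho_x u_x\|_{L^2}$, together with $(\rho^{\gamma})_{xx}$, which is handled using $\rho\in H^1$. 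We expect this to require extra care and may need to keep it in a weaker norm, since $\rho_{xx}$ is unavailable.
\item $\|\phi_{xx}\|_{L^\infty}\|\phi_{xx}\|_{L^2}$ and $\|\phi_x\|_{L^\infty}\|\phi_{xxx}\|_{L^2}$, controlled by Lemma \ref{L-10}.
\end{itemize}
Multiplying by $t^2$ and integrating in time then gives $\int_0^T t^2\|u_{xxx}\|_{L^2}^2\,{\rm d}t\le C$.
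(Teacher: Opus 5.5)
\paragraph{Same approach as the paper for Steps 1--2.} The paper's proof also differentiates \eqref{nsch-1d}$_2$ in $t$ and tests with $u_t$. It likewise reduces the capillary term to $-\int_I\phi_{xx}\phi_{xt}u_t\,{\rm d}x-\int_I\phi_x\phi_{xxt}u_t\,{\rm d}x$ and substitutes \eqref{nsch-1d}$_4$; its $J_1,J_2$ are exactly your terms $\rho\mu\phi_{xt}u_t$, $\rho_t\mu\phi_xu_t$, $\rho\mu_t\phi_xu_t,\dots$. It then controls $\|\mu_t\|_{L^\infty}$ by \eqref{mu-t-00}, pairs $t^2\|\mu_{xt}\|_{L^2}^2$ with $\|\sqrt\rho u_t\|_{L^2}^2$, multiplies by $t$ and closes with Gronwall.

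One slip to fix: in the $u_{xx}$ bound of Step 3 the capillary contribution is $\|\phi_x\phi_{xx}\|_{L^2}$, not $\|\phi_{xx}\|_{L^2}$. The bound $\sqrt t\|\phi_{xx}\|_{L^2}\le C$ alone does not control $\sqrt t\|\phi_x\phi_{xx}\|_{L^2}$. Write $\phi_x\phi_{xx}=\rho(\phi^3-\phi-\mu)\phi_x$ and use \eqref{mu00} with $t\|\mu_x\|_{L^2}^2\le C$ from Lemma \ref{L-10}. Alternatively, use $t\|\phi_{xxx}\|_{L^2}^2\le C$ as in \eqref{u-222}.

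\paragraph{The genuine gap: the $u_{xxx}$ bound.} Your final claim is $\int_0^Tt^2\|u_{xxx}\|_{L^2}^2\,{\rm d}t\le C$. Differentiating \eqref{nsch-1d}$_2$ in $x$ produces $(\rho^\gamma)_{xx}=\gamma\rho^{\gamma-1}\rho_{xx}+\gamma(\gamma-1)\rho^{\gamma-2}\rho_x^2$. Nothing in Lemmas \ref{L-0}--\ref{L-11} controls $\rho_{xx}$. You observe this yourself and then assert the bound anyway, so this step is unproved.

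The paper does not supply the missing ingredient either: its proof just cites \eqref{nsch-1d}$_2$ at this point. It also cannot be supplied from $H^1$ information on $\rho_0$. For a.e.\ $t>0$ your bounds give $(u_x-\rho^\gamma-\frac12\phi_x^2)_{xx}=(\rho u_t+\rho uu_x)_x\in L^2$ and $(\phi_x^2)_{xx}\in L^2$. Hence $u_{xxx}(\cdot,t)\in L^2$ if and only if $(\rho^\gamma)_{xx}(\cdot,t)\in L^2$.

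Now take a positive $\rho_0\in H^1$ with a corner, i.e.\ a jump of $\rho_{0x}$. The corner is transported along the particle path by \eqref{nsch-1d}$_1$ and persists, since the jump of $\rho_x$ is only multiplied by the exponential of an integrable factor. So $u_{xx}(\cdot,t)$ has a jump for every $t>0$. Consequently no bound on $\int_0^Tt^2\|u_{xxx}\|_{L^2}^2\,{\rm d}t$ depending only on the $H^1$ norms of the data can hold uniformly for the approximate solutions. Such a bound would pass to the weak limit and give $tu_{xxx}\in L^2(Q_T)$ for the limit solution, which is false here.

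\paragraph{What is actually proved.} Your argument establishes $\sup_t t\big(\|\sqrt\rho u_t\|_{L^2}^2+\|u_{xx}\|_{L^2}^2\big)+\int_0^Tt\|u_t\|_{H^1}^2\,{\rm d}t\le C$. The honest ``weaker norm'' substitute for the third-order term is the effective viscous flux $F=u_x-\rho^\gamma-\frac12\phi_x^2$. Since $F_{xx}=(\rho u_t+\rho uu_x)_x$, one gets $\int_0^Tt\|F_{xx}\|_{L^2}^2\,{\rm d}t\le C$.
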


\begin{proof}[\bf Proof]
Differentiating \eqref{nsch-1d}$_{2}$ with respect to $t$ and testing the result by $u_{t}$, then integrating over $I$ by parts, we have
\begin{align}\label{dt-7}
& \frac{1}{2} \frac{\rm d}{{\rm d}t} \| \sqrt{\rho} u_{t} \|_{L^{2}}^{2} + \| u_{xt} \|_{L^{2}}^{2}
\nonumber \\
 =&- 2 \int_{I} \rho u u_{t} u_{xt} {\rm d}x - \int_{I} \rho u u_{x}^{2} u_{t} {\rm d}x - \int_{I} \rho u^{2} u_{xx} u_{t} {\rm d}x - \int_{I} \rho u^{2} u_{x} u_{xt} {\rm d}x
\nonumber \\
& - \int_{I} \rho u_{x} u_{t}^{2} {\rm d}x - \gamma  \int_{I} \rho^{\gamma} u_{x} u_{xt} {\rm d}x - \gamma  \int_{I} \rho^{\gamma-1} \rho_{x} u u_{xt} {\rm d}x + \int_{I} \phi_{x} \phi_{xt} u_{xt} {\rm d}x
\nonumber \\
\le&  C \| u \|_{L^{\infty}} \| \sqrt{\rho} u_{t} \|_{L^{2}} \| u_{xt} \|_{L^{2}} + C\| u \|_{L^{\infty}} \| u_{x} \|_{L^{4}}^{2} \| \sqrt{\rho} u_{t} \|_{L^{2}}
\nonumber \\
&  + C \| u \|_{L^{\infty}}^{2} \left( \| u_{xx} \|_{L^{2}}  \| \sqrt{\rho} u_{t} \|_{L^{2}} + \| u_{x} \|_{L^{2}}  \| u_{xt} \|_{L^{2}} \right) + C \| u_{x} \|_{L^{\infty}}  \| \sqrt{\rho} u_{t} \|_{L^{2}}^{2}
\nonumber \\
&  + C  \| u_{x} \|_{L^{2}}  \| u_{xt} \|_{L^{2}} + C   \| u \|_{L^{\infty}}  \| \rho_{x} \|_{L^{2}} \| u_{xt} \|_{L^{2}}  + \int_{I} \phi_{x} \phi_{xt} u_{xt} {\rm d}x
\nonumber \\
\le &  \frac{1}{4} \| u_{xt} \|_{L^{2}}^{2} + C\left( \| \sqrt{\rho} u_{t} \|_{L^{2}}^{2} + 1 \right) \| \sqrt{\rho} u_{t} \|_{L^{2}}^{2} +  C \| u_{xx} \|_{L^{2}}^{2}   + C + \int_{I} \phi_{x} \phi_{xt} u_{xt} {\rm d}x.
\end{align}
It follows from \eqref{nsch-1d}$_{2}$ that
\begin{align}\label{u-222}
\| u_{xx} \|_{L^{2}}^{2}  \le C \| \sqrt{\rho} u_{t} \|_{L^{2}}^{2} + C\| \phi_{xx} \|_{H^{1}}^{2} + C.
\end{align}
We are going to estimate the last term on the right hand side  of \eqref{dt-7}. Using integration by parts formula yields
\begin{align}\label{u-kn}
\int_{I} \phi_{x} \phi_{xt} u_{xt} {\rm d}x = -\int_{I} \phi_{xx}\phi_{xt}u_{t}  {\rm d}x  -\int_{I} \phi_{x}\phi_{xxt}u_{t}  {\rm d}x =: J_{1} + J_{2}.
\end{align}
For $J_{1}$, it follows from \eqref{nsch-1d}$_{4}$ that
\begin{align}\label{u-kn-1}
J_{1} &  = \int_{I} \left( \mu - (\phi^{3} - \phi)  \right) \phi_{xt} \rho u_{t}  {\rm d}x
\nonumber \\
&\le  C \left( \| \mu \|_{L^{\infty}} + \| \phi^{3} - \phi \|_{L^{\infty}} \right) \| \phi_{xt} \|_{L^{2}} \| \sqrt{\rho} u_{t} \|_{L^{2}}
\nonumber \\
&\le C \left( \| \mu \|_{H^{1}}^{2} + 1 \right)  \| \sqrt{\rho} u_{t} \|_{L^{2}}^{2} + C \| \phi_{xt} \|_{L^{2}}^{2}.
\end{align}
For $J_{2}$, it also follows from \eqref{nsch-1d}$_{4}$ that
\begin{align}\label{u-kn-2}
J_{2} &  = \int_{I} \left( \rho_{t} \mu + \rho \mu_{t} - \rho_{t} (\phi^{3}-\phi) - \rho(3\phi^{2}-1)\phi_{t}  \right) \phi_{x} u_{t}  {\rm d}x
\nonumber \\
& \le \| \rho_{t} \|_{L^{2}} \| \phi_{x} \|_{L^{2}} \| u_{t} \|_{L^{\infty}} \left(  \| \mu \|_{L^{\infty}} + \| \phi^{3} - \phi\|_{L^{\infty}} \right)
\nonumber \\
& \quad + C \| \sqrt{\rho} u_{t} \|_{L^{2}} \| \phi_{x} \|_{L^{2}}  \left( \| \mu_{t} \|_{L^{\infty}} + \| 3\phi^{2} -1\|_{L^{\infty}} \| \phi_{t} \|_{L^{\infty}}  \right)
\nonumber \\
& \le C\| u_{xt} \|_{L^{2}} \left( \| \mu \|_{H^{1}} + 1 \right) + C \| \sqrt{\rho} u_{t} \|_{L^{2}}  \left( \| \mu_{xt} \|_{L^{2}} + \| \sqrt{\rho} \phi_{t} \|_{L^{2}} + C  +  \| \phi_{t} \|_{H^{1}}  \right)
\nonumber \\
& \le  \frac{1}{4}\| u_{xt} \|_{L^{2}}^{2} + C \left( \| \mu \|_{H^{1}}^{2} + 1 \right) + C t^{-1} \| \sqrt{\rho} u_{t} \|_{L^{2}}^{2}
\nonumber \\
& \quad  + C t \left( \| \mu_{xt} \|_{L^{2}}^{2} + \| \sqrt{\rho} \phi_{t} \|_{L^{2}}^{2} + C  +  \| \phi_{t} \|_{H^{1}}^{2}  \right).
\end{align}

Combining  \eqref{u-222}-\eqref{u-kn-2} with \eqref{dt-7}, then multiplying the result by $t$ and using \eqref{mu-1}, one obtains
\begin{align}\label{dt-77}
& \frac{1}{2} \frac{\rm d}{{\rm d}t} \left( t\| \sqrt{\rho} u_{t} \|_{L^{2}}^{2} \right) + t\| u_{xt} \|_{L^{2}}^{2}
\nonumber \\
& \le C\| \sqrt{\rho} u_{t} \|_{L^{2}}^{2}+C\left( \| \sqrt{\rho} u_{t} \|_{L^{2}}^{2} + \| \mu \|_{H^{1}}^{2} + 1 \right) t \| \sqrt{\rho} u_{t} \|_{L^{2}}^{2}
\nonumber \\
& \quad + Ct \| \mu \|_{H^{1}}^{2} +  Ct \| \phi_{xx} \|_{H^{1}}^{2}
+ C   t \| \phi_{t} \|_{H^{1}}^{2} + C t\| \sqrt{\rho} \phi_{t} \|_{L^{2}}^{2} + C t^{2} \| \mu_{xt} \|_{L^{2}}^{2} + C(T).
\end{align}
Thanks to \eqref{0}, \eqref{t-phi-2}, \eqref{e-6}, \eqref{u-x}, \eqref{mu-1} and \eqref{e-11}, using Gronwall's inequality to \eqref{dt-77}, and then using Poincar\'e's inequality, \eqref{u-222} and \eqref{nsch-1d}$_{2}$, we complete the proof of Lemma \ref{L-12}.
\end{proof}

Finally, as a consequence of Lemmas~\ref{L-0}--\ref{L-12}, we obtain the following
{\it a priori estimates}.
\begin{proposition}\label{p-estimate}
Assume that $(\rho, u, \phi)$ are smooth solutions to \eqref{nsch-1d}--\eqref{ic}, then there exists a generic constant $C$ depending on the initial data and $T$, such that
\begin{align}\label{zonge1}
\sup\limits_{0\le t \le T} & \left( \| (\rho, \phi)\|_{H^{1}}^{2} +  \| (\sqrt{\rho}u, \rho_{t}, u_{x} )\|_{L^{2}}^{2} \right)
\nonumber \\
& \qquad + \int_{0}^{T}  \left( \| \mu\|_{H^{1}}^{2} +  \| (u_{xx}, \phi_{xx},\sqrt{\rho}u_{t})\|_{L^{2}}^{2} \right) {\rm d}t \le C,
\end{align}
and
\begin{align}\label{zonge2}
\sup\limits_{0\le t \le T} & \left( \| (\sqrt{t}\phi_{xx}, \sqrt{t}\phi_{xxx}, \sqrt{t} \sqrt{\rho} u_{t},  \sqrt{t}u_{xx}, t \sqrt{\rho}\phi_{t}, t\mu_{xx} )\|_{L^{2}}^{2} + \| (\sqrt{t} \mu, t \phi_{t})\|_{H^{1}}^{2} \right)
\nonumber \\
& \qquad + \int_{0}^{T}  \left(    \| (\sqrt{t} \phi_{t}, t \mu_{t}, t\phi_{xt}, \sqrt{t} u_{t})\|_{H^{1}}^{2} + \| (t^{\frac{1}{4}}\sqrt{\rho}\phi_{t}, tu_{xxx})\|_{L^{2}}^{2} \right) {\rm d}t \le C.
\end{align}
\end{proposition}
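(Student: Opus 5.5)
The proposition only collects bounds that Lemmas \ref{L-0}--\ref{L-12} already give. So the plan is to go through each quantity in \eqref{zonge1} and \eqref{zonge2} and name the lemma that controls it. Where a quantity is not stated verbatim, I would recover it from the equations \eqref{nsch-1d} together with the auxiliary inequalities \eqref{mu00} and \eqref{key}.

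For \eqref{zonge1} the sources are as follows.
\begin{itemize}
\item \emph{Basic energy.} Lemma \ref{L-0} gives $\sup_t\|\sqrt\rho u\|_{L^2}$, $\sup_t\|\phi_x\|_{L^2}$ and $\int_0^T\|\mu_x\|_{L^2}^2$.
\item \emph{$L^\infty$ bounds.} \eqref{phi-b} bounds $\|\phi\|_{L^2}$, and \eqref{rho-b} gives $\rho\in L^\infty$, hence $\|\rho\|_{L^2}$.
\item \emph{Derivatives of $\rho$.} Lemma \ref{L-8} gives $\sup_t(\|\rho_x\|_{L^2}+\|\rho_t\|_{L^2})$.
\item \emph{Velocity.} Lemma \ref{L-7} gives $\sup_t\|u_x\|_{L^2}$ and $\int_0^T\|\sqrt\rho u_t\|_{L^2}^2$.
\item \emph{Chemical potential and $\phi_{xx}$.} Lemma \ref{L-5} gives $\int_0^T(\|\mu\|_{L^2}^2+\|\phi_{xx}\|_{L^2}^2)$.
\end{itemize}
Lemma \ref{L-8} only yields $\int_0^T\|u_{xx}\|_{L^2}\,{\rm d}t$ to the first power, so $\int_0^T\|u_{xx}\|_{L^2}^2$ needs one extra step. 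I would square the pointwise bound from \eqref{nsch-1d}$_2$ used in \eqref{t-uxx}:
$$\|u_{xx}\|_{L^2}\le C\big(\|\sqrt\rho u_t\|_{L^2}+\|u\|_{L^\infty}\|u_x\|_{L^2}+\|\rho_x\|_{L^2}+\|\phi_x\|_{L^\infty}\|\phi_{xx}\|_{L^2}\big).$$
Each factor is then handled as follows: $\|u\|_{L^\infty}\|u_x\|_{L^2}$ and $\|\rho_x\|_{L^2}$ are bounded uniformly in time, and $\|\sqrt\rho u_t\|_{L^2}^2$ is integrable. The term $\|\phi_x\|_{L^\infty}\|\phi_{xx}\|_{L^2}$ is where care is needed. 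Since $\phi_x(0)=0$, we have $\|\phi_x\|_{L^\infty}\le\|\phi_{xx}\|_{L^2}$, so $\|\phi_x\|_{L^\infty}^2\|\phi_{xx}\|_{L^2}^2$ is quartic in $\phi_{xx}$. I would control it by Gagliardo--Nirenberg, $\|\phi_x\|_{L^\infty}^2\le C\|\phi_x\|_{L^2}\|\phi_{xx}\|_{L^2}$, combined with the pointwise relation $\phi_{xx}=\rho(\phi^3-\phi)-\rho\mu$. That relation gives $\|\phi_{xx}\|_{L^2}\le C(\|\mu\|_{L^2}+1)\le C(\|\mu_x\|_{L^2}+1)$, so the cubic $\|\phi_{xx}\|_{L^2}^3$ must be integrable.

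The difficulty is that $\|\mu_x\|_{L^2}^2\in L^1$ alone does not make $\|\mu_x\|_{L^2}^3$ integrable. Here I would use $\sup_t t\|\mu_x\|_{L^2}^2\le C$ from Lemma \ref{L-10}:
$$\|\mu_x\|_{L^2}^3\le \|\mu_x\|_{L^2}^2\cdot t^{-1/2}\big(t\|\mu_x\|_{L^2}^2\big)^{1/2}\le C\,t^{-1/2}\|\mu_x\|_{L^2}^2.$$
This still fails to be obviously integrable near $t=0$, and I expect it to be the main obstacle. The cleanest alternative is to remember that $\phi$ also carries $H^1$ initial data. I would therefore take $\phi_x\in L^\infty_tL^2$ with $\phi_x(0)=0$ at the boundary, and bound the product through $\|\rho\phi_x\|_{L^\infty}$-type estimates via \eqref{nsch-1d}$_4$:
$$\phi_x\phi_{xx}=\rho\phi_x\big((\phi^3-\phi)-\mu\big),\qquad \|\phi_x\phi_{xx}\|_{L^2}\le C\|\phi_x\|_{L^2}(\|\mu\|_{L^\infty}+1)\le C(\|\mu\|_{H^1}+1).$$
This is square-integrable, which resolves the term. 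So I would use this identity rather than the $\|\phi_x\|_{L^\infty}$ splitting, which makes $\int_0^T\|u_{xx}\|_{L^2}^2\le C$ follow. Finally, $\int_0^T\|\mu\|_{H^1}^2$ combines Lemma \ref{L-5} with \eqref{0}, and $\sup_t\|\phi\|_{H^1}$ follows from \eqref{phi-b} and \eqref{0}.

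For \eqref{zonge2} the sources are:
\begin{itemize}
\item \emph{$\sqrt t$-weighted $\phi_{xx}$.} Lemma \ref{L-6} gives $\sup_t t\|\rho^{-1/2}\phi_{xx}\|_{L^2}^2$, which controls $t\|\phi_{xx}\|_{L^2}^2$ by \eqref{rho-b}.
\item \emph{$\sqrt t$-weighted $\mu$ and $\phi$ time derivatives.} Lemma \ref{L-10} gives $\sup_t(t\|\mu\|_{H^1}^2+t\|\phi_{xxx}\|_{L^2}^2)$ and $\int_0^T(t\|\phi_t\|_{H^1}^2+t^{1/2}\|\sqrt\rho\phi_t\|_{L^2}^2)$.
\item \emph{$t^2$-weighted $\phi$ and $\mu$ bounds.} Lemma \ref{L-11} gives $\sup_t(t^2\|\sqrt\rho\phi_t\|^2+t^2\|\phi_t\|_{H^1}^2+t^2\|\mu_{xx}\|^2)$ and $\int_0^T t^2(\|\mu_t\|_{H^1}^2+\|\phi_{xxt}\|^2)$. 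Together with $\int_0^T t\|\phi_t\|_{H^1}^2$, these give $\int_0^T\|t\phi_{xt}\|_{H^1}^2$.
\item \emph{Velocity.} Lemma \ref{L-12} gives the weighted bounds $\sqrt t\sqrt\rho u_t$, $\sqrt t u_{xx}$, $\sqrt t u_t\in L^2H^1$ and $tu_{xxx}\in L^2L^2$.
\end{itemize}
Apart from the $u_{xx}$ bookkeeping above, the proof is purely a collection of these estimates.
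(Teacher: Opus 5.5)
Your proposal is correct and follows the paper's route: the paper obtains the proposition simply by collecting Lemmas \ref{L-0}--\ref{L-12}. Your extra step for $\int_0^T\|u_{xx}\|_{L^2}^2\,{\rm d}t$ is worthwhile, since Lemma \ref{L-8} as stated only controls the first power, and rewriting $\phi_x\phi_{xx}=\rho\phi_x\big((\phi^3-\phi)-\mu\big)$ (the substitution the paper uses for $B_3$ in Lemma \ref{L-7}) gives $\|\phi_x\phi_{xx}\|_{L^2}\le C(\|\mu\|_{H^1}+1)\in L^2(0,T)$, which closes that step cleanly, so the abandoned Gagliardo--Nirenberg detour can simply be deleted.
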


\section{Existence And Uniqueness}\label{sec3}
In this section, we will prove Theorem \ref{TH}. In order to prove Theorem \ref{TH}, we start with the following global well-posedness for the case of positive initial densities, which can be proved in the same way as in \cite[Theorem 1]{D-L}.
\begin{proposition}\label{p-gw}
Let $\rho_{0} \in C^{3,\alpha}(I)$ satisfies $0<\underline{\rho} \le \rho_{0} \le \bar{\rho}$ for some constants $\alpha \in (0,1), \underline{\rho}$ and $\bar{\rho}$, $u_{0} \in C^{3,\alpha}(I)$ with $u_{0}(0) = u_{0}(1) = 0$ and $\phi_{0} \in C^{4,\alpha}(I)$. Then the problem \eqref{nsch-1d}--\eqref{ic} admits a unique classical solution $(\rho, u, \phi)$ satisfying that, for any $T>0$, there exists a constant $C=C(\underline{\rho}, \bar{\rho}, T)>0$ such that
\begin{alignat*}{2}
&(\rho_{xxx}, \rho_{xxt} ) \in C^{\frac{\alpha}{2},\frac{\alpha}{4}}(\overline{Q}_{T}), &&\qquad  0 < C^{-1} \le \rho  \le C \quad on~Q_{T},
\\
& ~ u_{x}  \in C^{2+\frac{\alpha}{2},1+\frac{\alpha}{4}}(\overline{Q}_{T}),&& \qquad \phi \in C^{4+\frac{\alpha}{2},1+\frac{\alpha}{4}}(\overline{Q}_{T}).
\end{alignat*}
\end{proposition}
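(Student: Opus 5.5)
The proof follows the strategy of \cite[Theorem 1]{D-L}: local classical existence, a priori estimates, and continuation. Since $\rho_0\ge\underline\rho>0$, I will use the Lagrangian mass coordinate $y=\int_0^x\rho(\xi,t)\,{\rm d}\xi$. This turns \eqref{nsch-1d} into a parabolic system for $(v,u,\phi)$, with $v=1/\rho$, on the fixed domain $(0,1)$. The continuity equation becomes the ODE $v_t=u_y$. The phase equation becomes a fourth-order parabolic equation in $\phi$, obtained by substituting $\mu=-\rho^{-1}\phi_{xx}+\phi^3-\phi$ into $\rho\phi_t+\rho u\phi_x=\mu_{xx}$. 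Its leading part is $\phi_t=-\rho\,\partial_x(\rho^{-1}\partial_x(\rho^{-1}\partial_x(\rho\,\partial_x\phi)))$-type, which is uniformly parabolic while $\rho$ stays in $[C^{-1},C]$.

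\textbf{Local existence.} I will obtain a local classical solution on $[0,T_*]$ with the Hölder regularity in the statement. The method is a Schauder fixed point, or a linearization plus contraction, in the spaces $C^{2+\alpha/2,1+\alpha/4}$ for $u$ and $C^{4+\alpha/2,1+\alpha/4}$ for $\phi$, with $\rho$ recovered from the transport equation along characteristics. This uses the classical Schauder theory for second-order parabolic equations (for $u$) and fourth-order ones (for $\phi$) with Neumann-type data $\phi_x=\mu_x=0$. The data regularity $C^{3,\alpha}\times C^{3,\alpha}\times C^{4,\alpha}$ is exactly what places the compatibility at $t=0$ in the right Hölder class.

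\textbf{Global bounds.} To extend to arbitrary $T$ I need the bound $C^{-1}\le\rho\le C$ and Hölder bounds depending only on $\underline\rho,\bar\rho,T$.
\begin{itemize}
\item The upper bound of $\rho$ and $\|\phi\|_{L^\infty}$ are already provided by \eqref{rho-b} and \eqref{phi-b}.
\item The lower bound comes from the effective viscous flux representation. In Lagrangian form, $(\log v)_t = u_y\cdot\rho$ relates to $u_x=\rho^\gamma+\frac12\phi_x^2+\partial_t\!\int(\rho u)+\dots$. Integrating the momentum equation in $x$ gives $(\log\rho)_t+u(\log\rho)_x=-(\rho^\gamma+\tfrac12\phi_x^2+F)$ with $F$ bounded via the energy \eqref{0}. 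Along particle paths this yields $\rho\ge C^{-1}$, using the bound on $\int_0^T\|\phi_x\|_{L^\infty}^2$ available from Proposition~\ref{p-estimate}.
\item With the density two-sided bounded, the equations are uniformly parabolic. Bootstrapping in $H^k$ and then in Hölder norms, together with the smoothness of the data, upgrades Proposition~\ref{p-estimate}, without time weights, to the stated $C^{\alpha}$-type bounds. Uniqueness follows from a standard energy estimate for the difference of two classical solutions with positive density.
\end{itemize}

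\textbf{Main obstacle.} I expect the hard step to be the lower bound on $\rho$ in the presence of the capillary stress $\frac12\phi_x^2$, and the fourth-order Schauder step with the nonlinear coupling $\rho^{-1}\phi_{xx}$. Since the paper states that this is done as in \cite{D-L}, I would defer the details to that reference, checking only that the Landau potential and Neumann conditions fit its framework.
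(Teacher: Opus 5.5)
Your route is the paper's own: its proof of Proposition~\ref{p-gw} is just a citation of \cite[Theorem 1]{D-L}. You likewise plan local Schauder existence, the a priori bounds, and continuation, with details deferred to \cite{D-L}.

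\textbf{Compatibility conditions.} One step you assert is false, and it is exactly where the statement as printed breaks down. H\"older regularity of the data does \emph{not} ``place the compatibility at $t=0$ in the right H\"older class.'' Schauder theory for the initial--boundary value problem on $\overline{Q}_T$ needs corner compatibility conditions, and here the claimed regularity forces them.

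\emph{Phase field.} Since $\rho_0>0$, we have $\phi(\cdot,0)=\phi_0$. For a solution with $\phi\in C^{4+\frac{\alpha}{2},1+\frac{\alpha}{4}}(\overline{Q}_T)$, the functions $\phi_x$, $\phi_{xx}$, $\phi_{xxx}$, $\rho$, $\rho_x$ are continuous on $\overline{Q}_T$. Hence $\phi_x=\mu_x=0$ at $x=0,1$ persists to $t=0$, giving $\phi_{0x}=0$ and $(\rho_0^{-1}\phi_{0xx})_x=0$ at $x=0,1$.

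\emph{Velocity.} The condition $u_x\in C^{2+\frac{\alpha}{2},1+\frac{\alpha}{4}}(\overline{Q}_T)$ makes $u_t=\int_0^x u_{xt}\,{\rm d}\xi$ continuous on $\overline{Q}_T$ and zero at $x=0$, and similarly at $x=1$. Evaluating \eqref{nsch-1d}$_{2}$ at the corners, where $u=u_t=\phi_x=0$, gives $u_{0xx}=(\rho_0^{\gamma})_x$ at $x=0,1$.

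None of this follows from $\rho_0,u_0\in C^{3,\alpha}$, $\phi_0\in C^{4,\alpha}$, $u_0(0)=u_0(1)=0$. For instance, $\phi_0(x)=x$ admits no solution in the stated class. So your local-existence step cannot produce the stated conclusion for general data. You have two options:
\begin{enumerate}
\item Add these conditions as hypotheses. The approximate data of Step~1 in Section~\ref{sec3} must then be corrected near $x=0,1$ to satisfy them; boundary-layer modifications that are small in $H^1$ suffice.
\item Weaken the conclusion to H\"older regularity on $[0,1]\times[\tau,T]$, $\tau>0$, plus the energy-level regularity near $t=0$ needed to justify Section~\ref{sec2}.
\end{enumerate}

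\textbf{Density lower bound.} A smaller point: the remainder $F$ is not bounded by the energy, because it contains $\partial_t\int_0^x\rho u\,{\rm d}\xi$. Set $G=u_x-\rho^\gamma-\frac12\phi_x^2$ and $\Psi=\int_0^x\rho u\,{\rm d}\xi$. Then $G_x=(\rho u)_t+(\rho u^2)_x$ gives $G=G(0,t)+(\partial_t+u\partial_x)\Psi$. Integrating over $I$ identifies $G(0,t)$, so that
\[
(\partial_t+u\partial_x)\Big(\log\rho+\Psi-\int_I\Psi\,{\rm d}x\Big)=-\rho^\gamma-\tfrac12\phi_x^2+\int_I\big(\rho^\gamma+\tfrac12\phi_x^2+\rho u^2\big)\,{\rm d}x .
\]
Here $|\Psi|\le C$ by \eqref{0}. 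Also $\int_0^T\|\phi_x\|_{L^\infty}^2{\rm d}t\le\int_0^T\|\phi_{xx}\|_{L^2}^2{\rm d}t\le C$, by Lemma~\ref{L-5} and $\phi_x|_{x=0,1}=0$. Integrating along particle paths then gives $\rho\ge C^{-1}$. With this correction, that part of your argument is fine.
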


Based on the {\it a priori estimates} derived in the Section \ref{sec2}, which is independent of the lower bound of $\rho_{0}$, we can extend the global well-posedness obtained in Proposition \ref{p-gw} to the case where the initial density is allowed to vacuum.

We are now ready to give the proof of Theorem \ref{TH}.

\begin{proof}[\bf Proof  of Theorem \ref{TH}]
{\bf (i) Existence.} The proof of existence will be completed by several steps.

{\bf Step 1. Construction of the initial data.}
Recall the initial data satisfy
\[
0\le \rho_0\in H^1(I), \qquad
u_0\in H_0^1(I), \qquad \phi_0\in H^1(I).
\]
Choose nonnegative smooth functions $r_n\in C^\infty(\bar I)$ such that
$r_n\to\rho_0$ in $H^1(I)$, and then set
$$
\rho_{0n}
=
\frac{r_n+n^{-1}}{\int_I(r_n+n^{-1})\,{\rm d}x}.
$$
By the standard density argument, there exist sequences
$$
\rho_{0n}\in C^\infty(\bar I),\qquad
u_{0n}\in C_c^\infty(I),\qquad
\phi_{0n}\in C^\infty(\bar I),
$$
such that
$$
\rho_{0n}>0,\qquad \int_I\rho_{0n}\,{\rm d}x=1,
$$
and
$$
\rho_{0n}\to\rho_0,\qquad
u_{0n}\to u_0,\qquad
\phi_{0n}\to\phi_0
\quad\text{in }H^1(I).
$$
In addition, there exists a constant $C>0$, such that
\[
\|\rho_{0n}\|_{C^{3,\alpha}}
+\|u_{0n}\|_{C^{3,\alpha}}
+\|\phi_{0n}\|_{C^{4,\alpha}}
\le C.
\]

{\bf Step 2. Approximate solutions and convergence.}

By Proposition~\ref{p-gw}, for each $n\in\mathbb{N}$,
	there exists a unique global classical solution
	$(\rho_n,u_n,\phi_n)$ to problem \eqref{nsch-1d}--\eqref{ic}
	corresponding to the initial data
	$(\rho_{0n},u_{0n},\phi_{0n})$.  From Proposition \ref{p-estimate}, there are two positive constants $T$ and $C$ independent of $n$, such that the solution $(\rho_{n}, u_{n}, \phi_{n})$ satisfying the following a priori estimates
\begin{align}\label{n-zonge1}
\sup\limits_{0\le t \le T} & \left( \| (\rho_{n}, \phi_{n})\|_{H^{1}}^{2} +  \| (\sqrt{\rho_{n}}u_{n},\rho_{nt},   u_{nx} )\|_{L^{2}}^{2} \right)
\nonumber \\
& \quad + \int_{0}^{T}  \left( \| \mu_{n}\|_{H^{1}}^{2} +  \| (u_{nxx}, \phi_{nxx},\sqrt{\rho_{n}}u_{nt})\|_{L^{2}}^{2} \right) {\rm d}t \le C,
\end{align}
and
\begin{align}\label{n-zonge2}
\sup\limits_{0\le t \le T} & \left( \| (\sqrt{t}\phi_{nxx}, \sqrt{t}\phi_{nxxx}, \sqrt{t} \sqrt{\rho_{n}} u_{nt},  t \sqrt{\rho_{n}}\phi_{nt}, t \phi_{nxt}, t\mu_{nxx} )\|_{L^{2}}^{2} + \| \sqrt{t} \mu_{n}\|_{H^{1}}^{2} \right)
\nonumber \\
& \qquad  + \int_{0}^{T}  \left(    \| (\sqrt{t} \phi_{nt}, t \mu_{nt}, t\phi_{xt}, \sqrt{t} u_{nt})\|_{H^{1}}^{2} + \| (t^{\frac{1}{4}}\sqrt{\rho_{n}}\phi_{nt}, tu_{nxxx})\|_{L^{2}}^{2} \right) {\rm d}t \le C.
\end{align}
Thus, by the Banach-Alaoglu theorem and using the Cantor's diagonal arguments, there is a subsequence, still denoted by $(\rho_{n}, u_{n}, \phi_{n}, \mu_{n})$, such that, as $n\to \infty$, we have for any $\delta \in (0, T)$
\begin{alignat}{3}
& \rho_{n}  \stackrel{*}{\rightharpoonup} \rho &\quad& {\rm in}~L^{\infty}(0,T;H^{1}), &\qquad& \rho_{nt}  \stackrel{*}{\rightharpoonup} \rho_{t} \quad {\rm in}~L^{\infty}(0,T;L^{2}),
\label{w-rho}\\
& u_{n}  \stackrel{*}{\rightharpoonup} u &\quad& {\rm in}~L^{\infty}(0,T;H_{0}^{1}), &\qquad& u_{nt}  \stackrel{}{\rightharpoonup} u_{t} \quad {\rm in}~L^{2}(\delta,T; H^{2}),
\label{w-u}\\
& \phi_{n}  \stackrel{*}{\rightharpoonup} \phi &\quad& {\rm in}~L^{\infty}(0,T;H^{1}), &\qquad& \phi_{nt}  \stackrel{}{\rightharpoonup} \phi_{t} \quad {\rm in}~L^{2}(\delta,T; H^{1}),
\label{w-phi} \\
& u_{n}  \stackrel{}{\rightharpoonup} u &\quad& {\rm in}~L^{2}(0,T;H^{2}), &\qquad& \phi_{n}  \stackrel{}{\rightharpoonup} \phi \quad \,\,\, \, \,{\rm in}~L^{2}(0,T; H^{2}),
\label{w-pphi}
\\
& \mu_{n}  \stackrel{}{\rightharpoonup} \mu &\quad& {\rm in}~L^{2}(0,T;H^{1}), &\qquad& t\mu_{tn}  \stackrel{}{\rightharpoonup} t\mu_{t}  \,\,\,{\rm in}~L^{2}(0,T; H^{1}),
\label{w-mu}
\end{alignat}
and $(\rho, u, \phi)$ satisfying
\begin{alignat}{2}
& \rho \in L^{\infty}(0,T;H^{1}), &\qquad& \rho_{t} \in L^{\infty}(0,T;L^{2}),
\label{r-rho}\\
& u \in L^{\infty}(0,T;H_{0}^{1}) \cap L^{2}(0,T; H^{2}), &\qquad& \sqrt{\rho} u_{t} \in L^{2}(0,T;L^{2}),
\\
& t u \in L^{\infty}(0,T;H^{2}) \cap L^{2}(0,T; H^{3}), &\qquad& \sqrt{t} u_{t} \in  L^{\infty}(0,T; H^{1}),
\\
& \phi \in L^{\infty}(0,T; H^{1}) \cap L^{2}(0,T; H^{2}), &\qquad& \sqrt{t} \phi_{xx} \in L^{\infty}(0,T;H^{1}),
\\
& \sqrt{t} \phi_{t} \in L^{\infty}(0,T;H^{1}),  &\qquad&   t\phi_{xxt} \in  L^{2}(0,T; L^{2})
\\
&\sqrt{t} \mu \in  L^{\infty}(0,T; H^{1}),  &\qquad&  \mu \in  L^{2}(0,T; H^{1}),
\\
& t\mu_{xx} \in L^{\infty}(0,T; L^{2}),   &\qquad& t \mu_{t} \in  L^{2}(0,T; H^{1}). \label{r-mu}
\end{alignat}
In addition, since $H^{1}(I) \hookrightarrow \hookrightarrow C(\bar{I})$ and $H^{2}(I)\hookrightarrow\hookrightarrow H^{1}(I)\hookrightarrow\hookrightarrow L^{2}(I) $, it follows from Aubin-Lions lemma and \eqref{w-rho}--\eqref{w-mu} that
\begin{alignat}{2}
&\rho_{n} \to \rho,  &\qquad \quad& {\rm in}~C([0,T];C),
\label{s-rho}\\
&u_{n} \to u, &\qquad \quad& {\rm in}~C([\delta,T];L^{2})\cap L^{2}(\delta, T; H^{1}_{0}),
\label{s-u}\\
&\phi_{n} \to \phi, &\qquad \quad& {\rm in}~C([\delta,T];L^{2})\cap L^{2}(\delta, T; H^{1}). \label{s-phi}
\end{alignat}
Due to the convergence \eqref{w-rho}--\eqref{w-mu} and \eqref{s-rho}--\eqref{s-phi}, we have the convergence of the nonlinear terms as follows
\begin{alignat}{2}
&(\rho_{n}u_{n}, \sqrt{\rho_{n}}u_{n}, \rho_{n}\phi_{n}, \sqrt{\rho_{n}}\phi_{n} ) \to  (\rho u, \sqrt{\rho}u, \rho \phi, \sqrt{\rho}\phi),  &\quad& {\rm in}~C([\delta,T];L^{2}),
\label{n-rho}\\
&(\rho_{n}u_{nt}, \sqrt{\rho_{n}}u_{nt}, \rho_{n}\phi_{nt}, \sqrt{\rho_{n}}\phi_{nt} ) \rightharpoonup (\rho u_{t}, \sqrt{\rho}u_{t}, \rho \phi_{t}, \sqrt{\rho}\phi_{t}),  &\quad& {\rm in}~L^{2}(\delta,T;L^{2}),
\label{n-rhot}\\
&( \rho_{n} u_{n} u_{nx}, \rho_{n} u_{n} \phi_{nx}) \to (\rho u u_{x}, \rho u \phi_{x}), &\quad & {\rm in}~L^{1}((\delta,T)\times I),
\label{n-phi} \\
&\phi_{nx} \phi_{nxx} \rightharpoonup \phi_{x} \phi_{xx}, &\quad & {\rm in}~L^{1}((\delta,T)\times I),
\label{n-nn} \\
&\rho_{n} u_{nx} \to \rho u_{x}, &\quad & {\rm in}~L^{2}((\delta,T)\times I),
\label{n-midu1} \\
&\rho_{nx} u_{n} \to \rho_{x} u, &\quad & {\rm in}~C([\delta,T]; L^{1}), \label{n-midu2}
\end{alignat}
for any $\delta \in (0,T)$.

{\bf Step 3. The existence.}

With the convergence \eqref{w-rho}--\eqref{w-mu} and \eqref{s-rho}--\eqref{n-midu2} at hand, we take the limit as $n \to \infty$ to the
equations of $(\rho_{n}, u_{n}, \phi_{n})$ to show that $(\rho, u, \phi)$ satisfies equations \eqref{nsch-1d} in the sense of distribution. Thanks to the regularities \eqref{r-rho}--\eqref{r-mu}, one can further show that $(\rho, u, \phi)$ satisfies \eqref{nsch-1d}, a.e. in $I\times (0, T)$. The initial condition $\rho|_{t=0}=\rho_{0}$ is guaranteed by \eqref{s-rho} by recalling that  $\rho_{n} |_{t=0} = \eta_{1/n} * \rho_0 + \frac{1}{n}$.

It remains to prove $( \rho u,  \rho \phi)|_{t=0} = ( \rho_{0} u_{0},  \rho_{0}\phi_{0})$. Note that $  \rho u \in C((0,T];L^{2})$ and $ \rho \phi \in C((0,T];L^{2})$, we only need to show that
\begin{align}
 \rho u \to  \rho_{0} u_{0}, &\quad {\rm in}~ L^{2}, \quad {\rm as} ~t\to 0, \label{c-initial1}
\\
 \rho \phi \to   \rho_{0}  \phi_{0}, &\quad {\rm in}~ L^{2}, \quad {\rm as} ~t\to 0. \label{c-initial2}
\end{align}
Using \eqref{n-zonge1} and Holder inequality yields
\begin{align}
\int_{0}^{T} \| ( \rho_{n} u_{n})_{t} \|_{L^{2}}^{2} {\rm d}t & \le 2 \int_{0}^{T} \left( \|  \rho_{nt} u_{n} \|_{L^{2}}^{2}  + \|  \rho_{n} u_{nt}\|_{L^{2}}^{2} \right) {\rm d}t
\nonumber\\
& \le C \int_{0}^{T} \left( \| \rho_{nt} \|_{L^{2}}^{2}  \| u_{n} \|_{L^{\infty}}^{2} +  \| \sqrt{\rho_{n}} u_{nt}\|_{L^{2}}^{2} \right) {\rm d}t \le C,
\end{align}
for sufficiently large $n$. Consequently, it follows from the Newton-Leibnitz formula, the Minkowski inequality that
\begin{align}\label{i-pu-2}
\| \rho u(\cdot, t) - \rho_{0} u_{0} \|_{L^{2}} \le & \| \rho u - \rho_{n} u_{n} \|_{L^{2}} (t) + \|  \rho_{n} u_{n} (\cdot, t) - \rho_{0n} u_{0n} \|_{L^{2}}
\nonumber \\
& + \| \rho_{0n} u_{0n} - \rho_{0n} u_{0} \|_{L^{2}} + \| \rho_{0n} u_{0} - \rho_{0} u_{0}\|_{L^{2}}
\nonumber \\
\le &  \| \rho u - \rho_{n} u_{n} \|_{L^{2}} (t) + \int_{0}^{t} \|( \rho_{n} u_{n})_{t}\|_{L^{2}}  {\rm d}s
\nonumber \\
&  + \| \rho_{0n}\|_{L^{\infty}} \| u_{0n} - u_{0} \|_{L^{2}} + \frac{C}{n} \| u_{0} \|_{L^{2}}
\nonumber \\
\le &  \| \rho u - \rho_{n} u_{n} \|_{L^{2}} (t) + C\sqrt{t} + C\| u_{0n} - u_{0} \|_{L^{2}} + \frac{C}{n} \| u_{0} \|_{L^{2}},
\end{align}
for sufficiently large $n$. Recalling \eqref{n-rho} and the convergence $u_{0n} \to u_{0}$ in $H^{1}$ as
$n \to \infty$, we may pass to the limit in \eqref{i-pu-2} to conclude that
$$
\| \rho u(\cdot,t) - \rho_{0}u_{0} \|_{L^{2}}
\le C \sqrt{t},
$$
which proves \eqref{c-initial1}.

Using \eqref{n-zonge2} and Holder inequality yields
\begin{align}
\int_{0}^{T} t^{\frac{1}{2}}\| (\rho_{n} \phi_{n})_{t} \|_{L^{2}}^{2} {\rm d}t & \le 2 \int_{0}^{T} t^{\frac{1}{2}}\left( \| \rho_{nt} \phi_{n} \|_{L^{2}}^{2}  + \| \rho_{n} \phi_{nt}\|_{L^{2}}^{2} \right) {\rm d}t
\nonumber\\
& \le C \int_{0}^{T} t^{\frac{1}{2}}\left( \| \rho_{nt} \|_{L^{2}}^{2}  \| \phi_{n} \|_{L^{\infty}}^{2} + \| \sqrt{\rho_{n}}\phi_{nt}\|_{L^{2}}^{2} \right) {\rm d}t \le C,
\end{align}
for large $n$. Similar to \eqref{i-pu-2}, we have
\begin{align}\label{i-pphi-2}
\| \rho \phi(\cdot, t) - \rho_{0} \phi_{0} \|_{L^{2}}
\le &  \| \rho \phi- \rho_{n} \phi_{n} \|_{L^{2}} (t) + \| \rho_{0n}\|_{L^{\infty}} \| \phi_{0n} - \phi_{0} \|_{L^{2}} + \frac{C}{n} \| \phi_{0} \|_{L^{2}}
\nonumber \\
&  + \left( \int_{0}^{t} s^{-\frac{1}{2}}  {\rm d}s \right)^{\frac{1}{2}}\left( \int_{0}^{t} \left( s^{\frac{1}{4}}\|( \rho_{n} \phi_{n})_{t}\|_{L^{2}} \right)^{2} {\rm d}s \right)^{\frac{1}{2}}
\nonumber \\
\le &  \| \rho \phi- \rho_{n} \phi_{n} \|_{L^{2}} (t) + Ct^{\frac{1}{4}} + C\| \phi_{0n} - \phi_{0} \|_{L^{2}} + \frac{C}{n} \| \phi_{0} \|_{L^{2}},
\end{align}
for large $n$. Noting \eqref{n-rho} and $\phi_{0n} \to \phi_{0}$ in $L^{2}$ as $n \to \infty$, we obtain by taking $n \to \infty$ that $\| \rho\phi(\cdot, t) - \rho_{0} \phi_{0} \|_{L^{2}} \le Ct^{\frac{1}{4}}$, which indicates \eqref{c-initial2}.
\, \\[-1em]
{\bf (ii) Uniqueness.} Let $(\rho_{i}, u_{i}, \phi_{i} ) (i=1,2)$ be two solutions to the problem \eqref{nsch-1d} obtained above. Denote $\rho = \rho_{1} -\rho_{2}, u = u_{1} - u_{2}, \phi = \phi_{1} - \phi_{2}$ and $\mu = \mu_{1} - \mu_{2}$. Then $(\rho, u, \phi)$ satisfies the following equations
\begin{align}\label{nsch-w}
\begin{cases}
\rho_{t} + (\rho u_{1})_{x}  +   (\rho_{2} u)_{x}  = 0,
\\
\rho_{1} u_{t} - u_{xx}  =  - \rho u_{2t} - \rho_{1} u_{1} u_{x} -  \rho_{1} u u_{2x}  - \rho u_{2} u_{2x}
\\
\hspace{6em} - (\rho_{1}^{\gamma} - \rho_{2}^{\gamma})_{x} - \phi_{1x}\phi_{xx} - \phi_{x}\phi_{2xx} ,
\\
\rho_{1}\phi_{t} + \rho_{1} u_{1} \phi_{x} = \mu_{xx} - \rho \phi_{2t} - \rho u_{2} \phi_{2x} - \rho_{1} u \phi_{2x},
\\
\rho_{1}\mu = -\phi_{xx} + \rho_{1}(\phi_{1}^{2}+ \phi_{1} \phi_{2} + \phi_{2}^{2} -1) \phi + \rho(\phi_{2}^{3}-\phi_{2}) - \rho \mu_{2},
\end{cases}
\end{align}
subject to the boundary value condition
\begin{align}\label{bc-w}
(u, \phi_{x}, \mu_{x}) \Big|_{x=0,1} = (0, 0, 0).
\end{align}
It follows from the Holder and Poincare inequalities that
\begin{align}\label{chuzhi00}
\| \sqrt{\rho_{1}} u \|_{L^{2}}^{2}(t) & = \int_{I} \rho_{1}|u|^{2}(x,t) {\rm d}x \le \| \rho_{1} u \|_{L^{2}} (t) \|  u \|_{L^{2}} (t)
\nonumber \\
& \le C \big( \| \rho_{1} u_{1} - \rho_{2}u_{2} \|_{L^{2}} (t)  + \| (\rho_{1} - \rho_{2})u_{2} \|_{L^{2}} (t) \big)
\nonumber \\
& \le C \big( \| \rho_{1} u_{1} - \rho_{2}u_{2} \|_{L^{2}} (t)  + \| \rho_{1} - \rho_{2} \|_{L^{2}} (t) \| \nabla u_{2} \|_{L^{2}} (t) \big)
\nonumber \\
& \le C \big( \| \rho_{1} u_{1} - \rho_{2}u_{2} \|_{L^{2}} (t)  + \| \rho_{1} - \rho_{2} \|_{L^{2}} (t)  \big),
\end{align}
for every $t>0$. Note that $\rho_{1}(x,0) = \rho_{2}(x,0) $ and $(\rho_{1} u_{1}) (x,0) = (\rho_{2}u_{2}) (x,0) $. Therefore, let $t \to 0 $, we deduce from \eqref{chuzhi00} that
\begin{align}\label{u000}
\lim\limits_{t\to 0} \| \sqrt{\rho_{1}} u \|_{L^{2}}^{2}(t) =0.
\end{align}
Similar to \eqref{chuzhi00}, due to $ \nabla \phi_{2} \in L^{\infty}(0,T; L^{2})$, we also have
\begin{align}\label{phi000}
\lim\limits_{t\to 0} \| \sqrt{\rho_{1}} \phi \|_{L^{2}}^{2}(t) =0.
\end{align}

{\bf Step 1. Energy inequalities of $\rho$ and $u$. }

Testing \eqref{nsch-w}$_{1}$ by $\rho$ and integrating over $I$ by parts, one has
\begin{align}\label{wy-00}
 \frac{1}{2} \frac{\rm d}{{\rm d}t} \| \rho \|_{L^{2}}^{2} & = \int_{I} \rho u_{1} \rho_{x}  {\rm d}x - \int_{I}  \left( \rho_{2x} u + \rho_{2} u_{x} \right) \rho  {\rm d}x
 \nonumber \\
 & =  - \frac{1}{2} \int_{I} \rho^{2} u_{1x}  {\rm d}x - \int_{I}  \left( \rho_{2x} u + \rho_{2} u_{x} \right) \rho  {\rm d}x
\nonumber \\
 & \le  \frac{1}{2} \| u_{1x} \|_{L^{\infty}} \| \rho \|_{L^{2}}^{2} + \| u \|_{L^{\infty}} \| \rho_{2x} \|_{L^{2}}  \| \rho \|_{L^{2}}  +  \| \rho_{2} \|_{L^{\infty}} \| u_{x} \|_{L^{2}}  \| \rho \|_{L^{2}}
\nonumber \\
 & \le C \left( \| u_{1x} \|_{H^{1}} + 1 \right) \| \rho \|_{L^{2}}^{2} + C \| u_{x} \|_{L^{2}}^{2}.
\end{align}

Testing \eqref{nsch-w}$_{2}$ by $u$ and integrating over $I$ by parts, one deduces
\begin{align}\label{wy-11}
 & \frac{1}{2} \frac{\rm d}{{\rm d}t} \| \sqrt{\rho_{1}} u \|_{L^{2}}^{2} +  \| u_{x}\|_{L^{2}}^{2}
 \nonumber \\
 = & - \int_{I}  \rho u u_{2t} {\rm d}x - \int_{I}  \rho u u_{2} u_{2x} {\rm d}x  - \int_{I}  \rho_{1} u^{2} u_{2x} {\rm d}x + \int_{I}  \left( \rho_{1}^{\gamma} - \rho_{2}^{\gamma} \right)  u_{x} {\rm d}x
 \nonumber \\
 &  + \int_{I}  \phi_{1x} \phi_{x} u_{x} {\rm d}x + \int_{I}  \phi_{1xx} \phi_{x} u {\rm d}x - \int_{I}  \phi_{2xx} \phi_{x} u {\rm d}x
 \nonumber \\
  \le  &\| u\|_{L^{\infty}}  \| \rho \|_{L^{2}} \| u_{2t} \|_{L^{2}} + \| u\|_{L^{\infty}} \| u_{2}\|_{L^{\infty}} \| \rho \|_{L^{2}} \| u_{2x} \|_{L^{2}}
\nonumber \\
 &   + C\| u_{2x}\|_{L^{2}} \| \sqrt{\rho_{1}} u \|_{L^{2}} \|  u \|_{L^{\infty}} + C  \| \rho \|_{L^{2}} \| u_{x} \|_{L^{2}}
 \nonumber \\
 &   + \| \phi_{1x}\|_{L^{\infty}} \| u_{x}\|_{L^{2}} \| \phi_{x} \|_{L^{2}} + \| u\|_{L^{\infty}} \| \phi_{x}\|_{L^{2}} \left( \| \phi_{1xx}\|_{L^{2}}  + \| \phi_{2xx}\|_{L^{2}} \right)
 \nonumber \\
   \le & \frac{1}{2} \| u_{x}\|_{L^{2}}^{2} + C \left(\| u_{2t} \|_{L^{2}}^{2} +1 \right) \| \rho \|_{L^{2}}^{2} + C \| \sqrt{\rho_{1}} u \|_{L^{2}}^{2}
 \nonumber \\
 &  + C \left( \| \phi_{1xx}\|_{L^{2}}^{2} + \| \phi_{2xx}\|_{L^{2}}^{2} \right) \| \phi_{x}\|_{L^{2}}^{2}.
\end{align}

{\bf Step 2. Growth estimates.}

We proceed to consider the growth estimates of $\rho$, $\sqrt{\rho_{1}\phi}$ and $\phi_{x}$. Testing \eqref{nsch-w}$_{1}$ with $\rho$, one obtains that
\begin{align*}
 \frac{\rm d}{{\rm d}t} \| \rho \|_{L^{2}}^{2} & \le  C\| u_{1x} \|_{H^{1}} \| \rho \|_{L^{2}}^{2} + C \| u_{x} \|_{L^{2}} \| \rho_{2x} \|_{L^{2}}  \| \rho \|_{L^{2}}  +  C \| u_{x} \|_{L^{2}}  \| \rho \|_{L^{2}},
\end{align*}
which implies (if $\|\rho\|_{L^2}\neq0$)
\begin{align}\label{zz-rho}
 \frac{\rm d}{{\rm d}t} \| \rho \|_{L^{2}} & \le  C\| u_{1x} \|_{H^{1}} \| \rho \|_{L^{2}} + C \| u_{x} \|_{L^{2}} \| \rho_{2x} \|_{L^{2}}   +  C \| u_{x} \|_{L^{2}}.
\end{align}
Thanks to $ u_{x}, \rho_{2x} \in L^{\infty}(0, T; L^{2}) $ and $ u_{1x} \in L^{2}(0,T; H^{1})$, applying the Gronwall's inequality to \eqref{zz-rho}, we conclude that
\begin{align}\label{fzz-rho}
\| \rho \|_{L^{2}}  \le  Ce^{C\int_{0}^{t} \| u_{1x} \|_{H^{1}} {\rm d}s }  \int_{0}^{t}  \left( \| u_{x} \|_{L^{2}} \| \rho_{2x} \|_{L^{2}}    +   \| u_{x} \|_{L^{2}} \right) {\rm d}s \le Ct.
\end{align}

We now consider the growth estimates of $\sqrt{\rho_{1}}\phi$. Testing \eqref{nsch-w}$_{3}$ with $\phi$, noting $ \rho_{1t} = - (\rho_{1}u_{1})_{x}$, $\rho_{1t}\in L^{\infty}(0,T;L^{2})$ and $(\phi_{i}, u_{i}, \phi, u)\in L^{\infty}(0,T;H^{1}), i=1, 2$, we have
\begin{align}\label{zz-phi}
&  \frac{1}{2}\frac{\rm d}{{\rm d}t} \| \sqrt{\rho_{1}} \phi\|_{L^{2}}^{2} + \| \sqrt{\rho_{1}}  \mu \|_{L^{2}}^{2}
\nonumber \\
& =   - \int_{I} \rho\phi_{2t} \phi {\rm d}x - \int_{I} \rho u_{2}\phi_{2x} \phi {\rm d}x - \int_{I} \rho_{1} u \phi_{2x} \phi {\rm d}x
\nonumber \\
&  \quad  + \int_{I} \rho_{1}(\phi_{1}^{2}+ \phi_{1} \phi_{2} + \phi_{2}^{2}-1) \phi \mu  {\rm d}x
 - \int_{I} \rho [\mu_{2} - (\phi_{2}^{3}-\phi_{2}) ] \mu {\rm d}x
\nonumber \\
& \le  C \| \rho \|_{L^{2}} \| \phi_{2t} \|_{L^{2}} \| \phi \|_{H^1}
+ C \| \rho \|_{L^{2}} \| u_{2x} \|_{L^2}  \| \phi_{2x} \|_{L^{2}} \| \phi \|_{H^1}+ C \| u_{x} \|_{L^2}  \| \phi_{2x} \|_{L^{2}} \| \sqrt{\rho_{1}} \phi \|_{L^2}
\nonumber \\
&  \quad  + C \| \sqrt{\rho_{1}}\mu \|_{L^{2}} \| \phi_{1}^{2}+ \phi_{1} \phi_{2} + \phi_{2}^{2}-1 \|_{L^\infty} \| \sqrt{\rho_{1}} \phi \|_{L^2}
\nonumber \\
&  \quad  + \| \rho \|_{L^{2}} \| \mu \|_{L^2} ( \| \mu_{2} \|_{L^{\infty}} + \| \phi_{2}^{3} - \phi_{2} \|_{L^\infty})
\nonumber \\
& \le \frac{1}{2} \| \sqrt{\rho_{1}}  \mu \|_{L^{2}}^{2}  + C\| \rho \|_{L^{2}} (\| \phi_{2t} \|_{L^{2}} + 1 ) +C \| \rho \|_{L^{2}} \| \mu \|_{L^2} ( \| \mu_{2} \|_{H^{1}} +1)+C.
\end{align}
Due to $ (t\phi_{2t},  \sqrt{t} \mu_{2}) \in L^{\infty}(0,T;L^{2})$ and \eqref{fzz-rho}, integrating \eqref{zz-phi} over $(0,t)$, we deduce from \eqref{phi000} that
\begin{align}\label{zz-rho-phi}
& \| \sqrt{\rho_{1}} \phi\|_{L^{2}}^{2} (t) + \int_{0}^{t} \| \sqrt{\rho_{1}}  \mu \|_{L^{2}}^{2} {\rm d}s
\nonumber \\
& \le C\left( \int_{0}^{t} \| \rho \|_{L^{2}} (\| \phi_{2t} \|_{L^{2}} + 1 ) {\rm d}s+ \int_{0}^{t} \| \rho \|_{L^{2}} \| \mu \|_{L^2} ( \| \mu_{2} \|_{H^{1}} +1)  {\rm d}s +t\right)
\nonumber \\
& \le C \int_{0}^{t} s (\| \phi_{2t} \|_{L^{2}} + 1 ) {\rm d}s+ C\int_{0}^{t} s\| \mu \|_{L^2} ( \| \mu_{2} \|_{H^{1}} +1)  {\rm d}s + Ct
 \le Ct.
\end{align}
Finally, we consider the growth estimates of $\phi_{x}$. Testing \eqref{nsch-w}$_{4}$ by $\mu$, integrating the result over $I$ by parts, and using $\phi_{1},\phi_{2} \in L^{\infty}(0,T;H^{1})$, \eqref{fzz-rho} and \eqref{zz-rho-phi}, one has
\begin{align}
 \| \sqrt{\rho_{1}}  \mu \|_{L^{2}}^{2} & = \int_{I} \mu_{x} \phi_{x} {\rm d}x + \int_{I} \rho_{1}(\phi_{1}^{2}+ \phi_{1} \phi_{2} + \phi_{2}^{2}-1) \phi \mu  {\rm d}x - \int_{I} \rho [\mu_{2} - (\phi_{2}^{3}-\phi_{2}) ] \mu {\rm d}x
 \nonumber \\
& \le \frac{1}{2} \| \sqrt{\rho_{1}}  \mu \|_{L^{2}}^{2}  + \| \phi_{x}\|_{L^{2}} \| \mu_{x}\|_{L^{2}} + \| \phi_{1}^{2}+ \phi_{1} \phi_{2} + \phi_{2}^{2}-1 \|_{L^\infty}^{2} \|  \sqrt{\rho_{1}} \phi \|_{L^2}^{2}
 \nonumber \\
& \quad  + \| \rho \|_{L^{2}} \| \mu \|_{L^2} ( \| \mu_{2} \|_{H^{1}} +1)
 \nonumber \\
& \le \frac{1}{2} \| \sqrt{\rho_{1}}  \mu \|_{L^{2}}^{2}  +  C\| \mu_{x}\|_{L^{2}} + Ct
   + Ct \| \mu \|_{L^2} ( \| \mu_{2} \|_{H^{1}} +1)
\nonumber \\
& \le \frac{1}{2} \| \sqrt{\rho_{1}}  \mu \|_{L^{2}}^{2}  + C\| \mu_{x}\|_{L^{2}} + C,
\end{align}
which implies
\begin{align} \label{zz-mu}
 \| \sqrt{\rho_{1}}  \mu \|_{L^{2}}^{2} \le  C\| \mu_{x}\|_{L^{2}} + C.
\end{align}
Testing \eqref{nsch-w}$_{4}$ by $\phi$ , integrating the result over $I$ by parts, and using \eqref{zz-rho-phi} and \eqref{zz-mu}, one gets
\begin{align}\label{zz-phix}
 \| \phi_{x} \|_{L^{2}}^{2}(t)  & = \int_{I} \rho_{1}\mu \phi {\rm d}x - \int_{I} \rho_{1}(\phi_{1}^{2}+ \phi_{1} \phi_{2} + \phi_{2}^{2}-1) \phi^{2}  {\rm d}x + \int_{I} \rho [\mu_{2} - (\phi_{2}^{3}-\phi_{2}) ] \phi {\rm d}x
 \nonumber \\
& \le \| \sqrt{\rho_{1}}  \phi \|_{L^{2}} \| \sqrt{\rho_{1}}  \mu \|_{L^{2}} + C\| \sqrt{\rho_{1}}  \phi \|_{L^{2}}^{2} + C \| \rho \|_{L^{2}} \| \phi\|_{H^1} ( \| \mu_{2} \|_{L^{2}} +1)
\nonumber \\
& \le Ct^{\frac{1}{2}} \left( \| \mu_{x}\|_{L^{2}}^{\frac{1}{2}} + 1 \right) + Ct + Ct \left( \| \mu_{2}\|_{L^{2}} + 1 \right)
\nonumber \\
& \le Ct^{\frac{1}{4}} \left( t \| \mu_{x}\|_{L^{2}}^{2} \right)^{\frac{1}{4}} + Ct^{\frac{1}{2}} \left( t \| \mu_{x}\|_{L^{2}}^{2} \right)^{\frac{1}{2}} +  Ct^{\frac{1}{2}} \le Ct^{\frac{1}{4}}.
\end{align}

{\bf Step 3. Energy inequalities of $\phi$. }

Testing \eqref{nsch-w}$_{3}$ and \eqref{nsch-w}$_{4}$ by $-\mu$ and $\phi_{t}$ respectively and integrating the result over $I$ by parts, summing up the obtained results, noting
$$
\frac{1}{2} \frac{\rm d}{{\rm d}t}  \| \sqrt{\rho_{1}} \phi\|_{L^{2}}^{2} =   \frac{1}{2} \int_{I} \rho_{1t} \phi^{2} {\rm d}x + \int_{I} \rho_{1} \phi \phi_{t} {\rm d}x,
$$
we have
\begin{align}\label{wy-dt-3}
 & \frac{1}{2} \frac{\rm d}{{\rm d}t}  \left( \| \sqrt{\rho_{1}} \phi\|_{L^{2}}^{2} + \| \phi_{x}\|_{L^{2}}^{2}  \right) + \| \mu_{x}\|_{L^{2}}^{2}
 \nonumber \\
 = & \frac{1}{2} \int_{I} \rho_{1t} \phi^{2} {\rm d}x - \int_{I}  (\phi_{1}^{2}+ \phi_{1} \phi_{2} + \phi_{2}^{2}) \phi \rho_{1} \phi_{t} {\rm d}x - \int_{I} \rho_{1}u_{1} \phi_{x} \mu {\rm d}x - \int_{I} \rho_{1} u \phi_{2x}  \mu {\rm d}x
\nonumber \\
&   - \int_{I} \rho \phi_{2t} \mu {\rm d}x - \int_{I} \rho u_{2} \phi_{2x} \mu {\rm d}x   +  2\int_{I} \rho_{1} \phi \phi_{t} {\rm d}x + \int_{I} \rho [\mu_{2} - (\phi_{2}^{3}-\phi_{2}) ] \phi_{t} {\rm d}x
\nonumber \\
 =:& \sum\limits_{i=1}^{8} K_{i}.
\end{align}
We now estimate each term on the right hand side of \eqref{wy-dt-3} as follows. For $K_{1}$, we have
\begin{align*}
K_{1} & \le C \| \rho_{1t} \|_{L^{2}}  \| \phi \|_{L^{\infty}}^{2}  \le C \left( \| \sqrt{\rho_{1}} \phi \|_{L^{2}}^{2} +    \| \phi_{x} \|_{L^{2}}^{2} \right).
\end{align*}
Using \eqref{nsch-w}$_{3}$ and integration by parts, one has
\begin{align*}
K_{2} + K_{7} & = - \int_{I}  (\phi_{1}^{2}+ \phi_{1} \phi_{2} + \phi_{2}^{2}-2) \phi \left( - \rho_{1} u_{1} \phi_{x} + \mu_{xx} - \rho \phi_{2t} - \rho u_{2} \phi_{2x} - \rho_{1} u \phi_{2x}  \right)  {\rm d}x
\\
& = \int_{I}  (\phi_{1}^{2}+ \phi_{1} \phi_{2} + \phi_{2}^{2}-2) \phi \left( \rho_{1} u_{1} \phi_{x} + \rho \phi_{2t} + \rho u_{2} \phi_{2x} + \rho_{1} u \phi_{2x}  \right)  {\rm d}x
\\
& \quad +  \int_{I}  \partial_{x}(\phi_{1}^{2}+ \phi_{1} \phi_{2} + \phi_{2}^{2})  \phi \mu_{x} {\rm d}x +  \int_{I}  (\phi_{1}^{2}+ \phi_{1} \phi_{2} + \phi_{2}^{2}-2)  \phi_{x} \mu_{x} {\rm d}x
\\
& \le C\| \phi_{1}^{2}+ \phi_{1} \phi_{2} + \phi_{2}^{2} -2 \|_{L^{\infty}} \left( \| \sqrt{\rho_{1}} \phi \|_{L^{2}}\| \phi_{x} \|_{L^{2}} \| u_{1} \|_{L^{\infty}}  + \| \phi \|_{L^{\infty}} \| \rho \|_{L^{2}} \| \phi_{2t} \|_{L^{2}}  \right)
\\
& \quad + C\| \phi_{1}^{2}+ \phi_{1} \phi_{2} + \phi_{2}^{2} -2\|_{L^{\infty}}  \| \phi \|_{L^{\infty}} \| \rho \|_{L^{2}} \| u_{2} \|_{L^{\infty}}  \| \phi_{2x} \|_{L^{2}}
\\
& \quad + C\| \phi_{1}^{2}+ \phi_{1} \phi_{2} + \phi_{2}^{2} -2\|_{L^{\infty}}\| \sqrt{\rho_{1}} \phi \|_{L^{2}} \| \sqrt{\rho_{1}} u \|_{L^{2}} \| \phi_{2x} \|_{L^{\infty}}
\\
& \quad +  \| \partial_{x} (\phi_{1}^{2}+ \phi_{1} \phi_{2} + \phi_{2}^{2} ) \|_{L^{2}} \| \phi \|_{L^{\infty}} \| \mu_{x} \|_{L^{2}}
\\
& \quad +  \| \phi_{1}^{2}+ \phi_{1} \phi_{2} + \phi_{2}^{2}-2\|_{L^{\infty}} \| \phi_{x} \|_{L^{2}} \| \mu_{x} \|_{L^{2}}
\\
& \le C  \| \sqrt{\rho_{1}} \phi \|_{L^{2}}\| \phi_{x} \|_{L^{2}}  + C(\| \sqrt{\rho_{1}} \phi \|_{L^{2}} +    \| \phi_{x} \|_{L^{2}}) (t^{-1}\| \rho \|_{L^{2}} ) (t \| \phi_{2t} \|_{L^{2}} )
\\
& \quad + C (\| \sqrt{\rho_{1}} \phi \|_{L^{2}} +    \| \phi_{x} \|_{L^{2}}) \| \rho \|_{L^{2}} +  C \| \sqrt{\rho_{1}} \phi \|_{L^{2}} \| \sqrt{\rho_{1}} u \|_{L^{2}}
\\
& \quad + C  (\| \sqrt{\rho_{1}} \phi \|_{L^{2}} +    \| \phi_{x} \|_{L^{2}}) \| \mu_{x} \|_{L^{2}} + C \| \phi_{x} \|_{L^{2}} \| \mu_{x} \|_{L^{2}}
\\
& \le \frac{1}{2}\| \mu_{x} \|_{L^{2}}^{2} + C\frac{ \| \rho \|_{L^{2}}^{2} }{t} +  C \left(1 + t\| \phi_{2t} \|_{L^{2}}^{2}  \right)  \left( \| \sqrt{\rho_{1}} \phi \|_{L^{2}}^{2} +    \| \phi_{x} \|_{L^{2}}^{2} + \| \sqrt{\rho_{1}}u\|_{L^{2}}^{2} \right).
\end{align*}
For $K_{i} (i=3, \cdots, 6)$, direct calculation shows that
\begin{align*}
K_{3} & \le C \| \sqrt{\rho_{1}} \mu \|_{L^{2}} \| u_{1} \|_{L^{\infty}} \| \phi_{x} \|_{L^{2}} \le \varepsilon_{2}\| \sqrt{\rho_{1}} \mu \|_{L^{2}}^{2} + C\| \phi_{x} \|_{L^{2}}^{2},
\\[1em]
K_{4} & \le \| \sqrt{\rho_{1}} \mu \|_{L^{2}} \| \sqrt{\rho_{1}}u \|_{L^{2}} \| \phi_{2x} \|_{L^{\infty}} \le \varepsilon_{2}\| \sqrt{\rho_{1}} \mu \|_{L^{2}}^{2} + C \| \phi_{2x} \|_{H^{1}}^{2}  \| \sqrt{\rho_{1}}u \|_{L^{2}}^{2},
\\[1em]
K_{5} & \le C \| \rho \|_{L^{2}} \| \mu \|_{L^{\infty}} \| \phi_{2t} \|_{L^{2}} \le \varepsilon_{2} \left( \| \sqrt{\rho_{1}} \mu \|_{L^{2}}^{2}  + \| \mu_{x} \|_{L^{2}}^{2} \right) + C (t \| \phi_{2t} \|_{L^{2}}^{2}) \frac{\| \rho \|_{L^{2}}^{2}} {t},
\\[1em]
K_{6} & \le C \| \rho \|_{L^{2}} \| \mu \|_{L^{\infty}} \| u_{2} \|_{L^{\infty}}  \| \phi_{2x} \|_{L^{2}} \le \varepsilon_{2} \left( \| \sqrt{\rho_{1}} \mu \|_{L^{2}}^{2}  + \| \mu_{x} \|_{L^{2}}^{2} \right) + C\| \rho \|_{L^{2}}^{2}.
\end{align*}
Using \eqref{nsch-w}$_{1}$ and integration by parts, we obtain
\begin{align*}
K_{8} & = \int_{I} \rho [\mu_{2} - (\phi_{2}^{3}-\phi_{2}) ] \phi_{t} {\rm d}x
\\
&= \frac{\rm d}{{\rm d} t} \int_{I} \rho [\mu_{2} - (\phi_{2}^{3}-\phi_{2}) ] \phi {\rm d}x - \int_{I} \rho [\mu_{2t} - (3\phi_{2}^{2}-1)\phi_{2t} ] \phi {\rm d}x
\\
&\quad - \int_{I} \rho_{t} [\mu_{2} - (\phi_{2}^{3}-\phi_{2}) ] \phi {\rm d}x
\\
& = \frac{\rm d}{{\rm d} t} \int_{I} \rho [\mu_{2} - (\phi_{2}^{3}-\phi_{2}) ] \phi {\rm d}x   - \int_{I} \rho [\mu_{2t} - (3\phi_{2}^{2}-1)\phi_{2t} ] \phi {\rm d}x
\\
&\quad + \int_{I} [(\rho u_{1})_{x}  +   (\rho_{2} u)_{x}]  [\mu_{2} - (\phi_{2}^{3}-\phi_{2}) ] \phi {\rm d}x
\\
& = \frac{\rm d}{{\rm d} t} \int_{I} \rho [\mu_{2} - (\phi_{2}^{3}-\phi_{2}) ] \phi {\rm d}x  - \int_{I} \rho [\mu_{2t} - (3\phi_{2}^{2}-1)\phi_{2t} ] \phi {\rm d}x
\\
&\quad - \int_{I} [(\rho u_{1})  +   (\rho_{2} u) ]  [\mu_{2} - (\phi_{2}^{3}-\phi_{2}) ] \phi_{x} {\rm d}x
\\
& \quad - \int_{I} [(\rho u_{1})  +   (\rho_{2} u) ]  [\mu_{2x} - (3\phi_{2}^{2}-1) \phi_{2x} ] \phi {\rm d}x
\\
& \le \frac{\rm d}{{\rm d} t} \int_{I} \rho [\mu_{2} - (\phi_{2}^{3}-\phi_{2}) ] \phi {\rm d}x + \| \rho \|_{L^{2}} \| \phi \|_{L^{\infty}} \left( \| \mu_{2t} \|_{L^{2}} + \| 3\phi^{2}_{2} - 1 \|_{L^{\infty}}  \| \phi_{2t} \|_{L^{2}} \right)
\\
& \quad + \| \phi_{x} \|_{L^{2}} \left( \| \rho \|_{L^{2}}  \| u_{1} \|_{L^{\infty}} + \| \rho_{2} \|_{L^{\infty}}  \| u \|_{L^{2}} \right) \left( \| \mu_{2}\|_{L^{\infty}} +  \| \phi^{3}_{2} - \phi_{2} \|_{L^{\infty}}  \right)
\\
& \quad + \| \phi \|_{L^{\infty}} \left( \| \rho \|_{L^{2}}  \| u_{1} \|_{L^{\infty}} + \| \rho_{2} \|_{L^{\infty}}  \| u \|_{L^{2}} \right) \left( \| \mu_{2x}\|_{L^{2}} +  \| 3\phi^{2}_{2} - 1 \|_{L^{\infty}}  \| \phi_{2x} \|_{L^{2}} \right)
\\
& \le \frac{\rm d}{{\rm d} t} \int_{I} \rho [\mu_{2} - (\phi_{2}^{3}-\phi_{2}) ] \phi {\rm d}x  +  C \frac{\| \rho \|_{L^{2}} } {t} \left( \| \sqrt{\rho_{1}} \phi \|_{L^{2}} +    \| \phi_{x} \|_{L^{2}} \right)  \left( t\| \mu_{2t} \|_{L^{2}} +  t\| \phi_{2t} \|_{L^{2}} \right)
\\
& \quad + C \left( \| \sqrt{\rho_{1}} \phi \|_{L^{2}} + \| \phi_{x} \|_{L^{2}}  \right) \left( \| \rho \|_{L^{2}} +  \| u_{x} \|_{L^{2}} \right) \left( \| \mu_{2}\|_{H^{1}} +  1 \right)
\\
& \le \frac{\rm d}{{\rm d} t} \int_{I} \rho [\mu_{2} - (\phi_{2}^{3}-\phi_{2}) ] \phi {\rm d}x  +   \frac{1}{2} \frac{\| \rho \|_{L^{2}}^{2} } {t^{2}} + \frac{1}{4}  \| u_{x} \|_{L^{2}}^{2}
\\
& \quad + C \left( \| \sqrt{\rho_{1}} \phi \|_{L^{2}}^{2} +    \| \phi_{x} \|_{L^{2}}^{2} + \| \rho \|_{L^{2}}^{2} \right)  \left( t^{2}\| \mu_{2t} \|_{L^{2}}^{2} +  t^{2}\| \phi_{2t} \|_{L^{2}}^{2} + \| \mu_{2}\|_{H^{1}}^{2} + 1 \right).
\end{align*}
Substituting $K_{1}$--$K_{8}$ into \eqref{wy-dt-3}, we arrive at
\begin{align}\label{wy-phi}
 & \frac{\rm d}{{\rm d}t}  \left( \| \sqrt{\rho_{1}} \phi\|_{L^{2}}^{2} + \| \phi_{x}\|_{L^{2}}^{2}  \right) + \| \mu_{x}\|_{L^{2}}^{2}
 \nonumber \\
 \le& 2\frac{\rm d}{{\rm d} t} \int_{I} \rho [\mu_{2} - (\phi_{2}^{3}-\phi_{2}) ] \phi {\rm d}x  +   \frac{1}{2}  \frac{\| \rho \|_{L^{2}}^{2} } {t^{2}} + \frac{1}{4}  \| u_{x} \|_{L^{2}}^{2} + C\varepsilon_{2} \left( \| \sqrt{\rho_{1}} \mu \|_{L^{2}}^{2}  + \| \mu_{x} \|_{L^{2}}^{2} \right)
\nonumber \\
&  + C \mathcal{B}(t) \left( \frac{ \| \rho \|_{L^{2}}^{2} }{t} +  \| \sqrt{\rho_{1}} \phi \|_{L^{2}}^{2} +    \| \phi_{x} \|_{L^{2}}^{2} + \| \sqrt{\rho_{1}} u\|_{L^{2}}^{2} \right),
\end{align}
where
\begin{align*}
\mathcal{B}(t) := 1 + \| \mu_{2}\|_{H^{1}}^{2} + \| \phi_{2x} \|_{H^{1}}^{2} + t^{2}\| \phi_{2t} \|_{L^{2}}^{2} +  t^{2}\| \mu_{2t} \|_{L^{2}}^{2}.
\end{align*}
We next estimate $\| \sqrt{\rho_{1}} \mu \|_{L^{2}}$. Testing \eqref{nsch-w}$_{4}$ by $\mu$,  integrating over $I$ by parts, one gets
\begin{align}\label{wy-mu}
\| \sqrt{\rho_{1}} \mu \|_{L^{2}}^{2} & = \int_{I} \phi_{x} \mu_{x} {\rm d}x + \int_{I} \rho_{1}\mu (\phi_{1}^{2}+ \phi_{1} \phi_{2} + \phi_{2}^{2} -1) \phi  {\rm d}x
\nonumber \\
& \quad +  \int_{I} \rho [(\phi_{2}^{3}-\phi_{2}) -  \mu_{2} ] \mu  {\rm d}x
\nonumber \\
& \le \frac{1}{4} \| \sqrt{\rho_{1}} \mu \|_{L^{2}}^{2} + \| \phi_{x} \|_{L^{2}}^{2} + \| \mu_{x} \|_{L^{2}}^{2} + C\| \sqrt{\rho_{1}} \phi \|_{L^{2}}^{2}
\nonumber \\
& \quad + C \frac{1}{\sqrt{t}} \| \rho\|_{L^{2}} \sqrt{t} \left( 1+  \| \mu_{2}\|_{L^{2}} \right) \| \mu \|_{L^{\infty}}
\nonumber \\
& \le \frac{1}{4} \| \sqrt{\rho_{1}} \mu \|_{L^{2}}^{2} + \| \phi_{x} \|_{L^{2}}^{2} + \| \mu_{x} \|_{L^{2}}^{2} + C\| \sqrt{\rho_{1}} \phi \|_{L^{2}}^{2}
\nonumber \\
& \quad + C \frac{1}{\sqrt{t}} \| \rho\|_{L^{2}} \left(  \|  \sqrt{\rho_{1}} \mu \|_{L^{2}} + \|   \mu_{x} \|_{L^{2}} \right)
\nonumber \\
& \le \frac{1}{2} \| \sqrt{\rho_{1}} \mu \|_{L^{2}}^{2} + \| \phi_{x} \|_{L^{2}}^{2} + C \| \mu_{x} \|_{L^{2}}^{2} + C\| \sqrt{\rho_{1}} \phi \|_{L^{2}}^{2} + C\frac{\| \rho\|_{L^{2}}^{2}}{t}.
\end{align}
Inserting \eqref{wy-mu} into \eqref{wy-phi}, integrating over $(0,t)$, choosing $\varepsilon_{2}$ sufficiently small such that $C\varepsilon_{2}<1/2$, and using \eqref{phi000} and \eqref{zz-phix},  there holds
\begin{align}\label{wy-phi-tt}
& \| \sqrt{\rho_{1}} \phi\|_{L^{2}}^{2} + \| \phi_{x}\|_{L^{2}}^{2}  + \int_{0}^{t} \| \mu_{x}\|_{L^{2}}^{2} {\rm d}s
 \nonumber \\
 \le& C_{3}\frac{ \| \rho\|_{L^{2}}^{2} }{t}  +   \frac{1}{2}  \int_{0}^{t} \frac {\| \rho \|_{L^{2}}^{2} } {s^{2}} {\rm d}s  + \frac{1}{4}  \int_{0}^{t}  \| u_{x} \|_{L^{2}}^{2} {\rm d}s
\nonumber \\
&  + C \int_{0}^{t} \mathcal{B}(s) \left( \frac{ \| \rho \|_{L^{2}}^{2} }{s} +  \| \sqrt{\rho_{1}} \phi \|_{L^{2}}^{2} +    \| \phi_{x} \|_{L^{2}}^{2} + \| \sqrt{\rho_{1}} u\|_{L^{2}}^{2} \right) {\rm d}s,
\end{align}
where we have used
\begin{align*}
& 2 \int_{I} \rho [\mu_{2} - (\phi_{2}^{3}-\phi_{2}) ] \phi {\rm d}x
\\
 \le& 2 \| \rho\|_{L^{2}} \| \phi \|_{L^{\infty}} (\| \mu_{2}\|_{L^{2}} + \| \phi_{2}^{3} -\phi_{2}\|_{L^{2}})
\\
 \le& C \frac{ \| \rho\|_{L^{2}} }{\sqrt{t}}\left( \| \sqrt{\rho_{1}} \phi\|_{L^{2}} + \| \phi_{x}\|_{L^{2}} \right)  \sqrt{t} \| \mu_{2}\|_{L^{2}} + C\| \rho\|_{L^{2}} \left( \| \sqrt{\rho_{1}} \phi\|_{L^{2}} + \| \phi_{x}\|_{L^{2}} \right)
\\
 \le& \frac{1}{2}\left( \| \sqrt{\rho_{1}} \phi\|_{L^{2}}^{2} + \| \phi_{x}\|_{L^{2}}^{2} \right) + C_{3}\frac{ \| \rho\|_{L^{2}}^{2} }{t}.
\end{align*}

{\bf Step 4. Singular t-weighted energy inequalities and uniqueness.}

Similar to \eqref{wy-00}, one has
\begin{align}\label{wy-000}
 \frac{1}{2} \frac{\rm d}{{\rm d}t} \| \rho \|_{L^{2}}^{2}
 & \le  \frac{1}{2} \| u_{1x} \|_{L^{\infty}} \| \rho \|_{L^{2}}^{2} + \| u \|_{L^{\infty}} \| \rho_{2x} \|_{L^{2}}  \| \rho \|_{L^{2}}  +  \| \rho_{2} \|_{L^{\infty}} \| u_{x} \|_{L^{2}}  \| \rho \|_{L^{2}}
\nonumber \\
 & \le C  \| u_{1x} \|_{H^{1}} \| \rho \|_{L^{2}}^{2}  + C t \| u_{x} \|_{L^{2}}^{2} + \frac{1}{2} \frac{\| \rho \|_{L^{2}}^{2}}{t}.
\end{align}
Multiplying \eqref{wy-000} by $t^{-1}$ yields
\begin{align}\label{wy0}
\frac{\rm d}{{\rm d}t} \left( \frac{\| \rho \|_{L^{2}}^{2}} { t } \right) +  \frac{\| \rho \|_{L^{2}}^{2}} {t^{2}}  & \le C \left( \| u_{1x} \|_{H^{1}} + 1 \right) \frac{\| \rho \|_{L^{2}}^{2}} { t } + C_{1}  \| u_{x} \|_{L^{2}}^{2}.
\end{align}
Multiplying \eqref{wy-11} by $2(C_{1} + 1)$,  and adding the resultant with \eqref{wy0}, one obtains
\begin{align}\label{wy1}
& \frac{\rm d}{{\rm d}t} \left( \frac{\| \rho \|_{L^{2}}^{2}} { t } + (C_{1}+1)  \| \sqrt{\rho_{1}} u \|_{L^{2}}^{2}  \right) +  \frac{\| \rho \|_{L^{2}}^{2}} {t^{2}} + \| u_{x}\|_{L^{2}}^{2}
\nonumber \\
\le&  C \left( \| u_{1x} \|_{H^{1}} + t\| u_{2t} \|_{L^{2}}^{2} + \| \phi_{1x}\|_{H^{1}}^{2} + \| \phi_{2xx}\|_{L^{2}}^{2} + 1 \right)  \left( \frac{\| \rho \|_{L^{2}}^{2}} { t }  +  \| \sqrt{\rho_{1}} u \|_{L^{2}}^{2} + \| \phi_{x}\|_{L^{2}}^{2}\right).
\end{align}
Multiplying \eqref{wy1} by $C_{3} + 1$, integrating it over $(0,t)$, adding the resultant with \eqref{wy-phi-tt}, we finally arrive at
\begin{align}\label{wy-zong}
& \frac{\| \rho \|_{L^{2}}^{2}} { t } +   \| \sqrt{\rho_{1}} u \|_{L^{2}}^{2} + \| \sqrt{\rho_{1}} \phi\|_{L^{2}}^{2} + \| \phi_{x}\|_{L^{2}}^{2}  + \int_{0}^{t} \left( \frac{\| \rho \|_{L^{2}}^{2}} {s^{2}} + \| u_{x}\|_{L^{2}}^{2} + \| \mu_{x}\|_{L^{2}}^{2} \right) {\rm d}s
 \nonumber \\
 \le&   C \int_{0}^{t} \mathcal{F}(s) \left( \frac{ \| \rho \|_{L^{2}}^{2} }{s}  + \| \sqrt{\rho_{1}} u\|_{L^{2}}^{2}  +  \| \sqrt{\rho_{1}} \phi \|_{L^{2}}^{2} +    \| \phi_{x} \|_{L^{2}}^{2}\right) {\rm d}s,
\end{align}
where
\begin{align*}
\mathcal{F}(s) : = 1 + \| \mu_{2}\|_{H^{1}}^{2} + \| \phi_{2x} \|_{H^{1}}^{2} + t^{2}\| \phi_{2t} \|_{L^{2}}^{2} +  t^{2}\| \mu_{2t} \|_{L^{2}}^{2}+ \| u_{1x} \|_{H^{1}} + t\| u_{2t} \|_{L^{2}}^{2} + \| \phi_{1x}\|_{H^{1}}^{2},
\end{align*}
and $\mathcal{F}(s) \in L^{1}(0,t)$. By using \eqref{fzz-rho}, \eqref{u000}, \eqref{phi000} and \eqref{zz-phix}, we conclude that
\begin{align*}
\lim\limits_{t \to 0} \left( \frac{\| \rho \|_{L^{2}}^{2}} { t } +   \| \sqrt{\rho_{1}} u \|_{L^{2}}^{2} + \| \sqrt{\rho_{1}} \phi\|_{L^{2}}^{2} + \| \phi_{x}\|_{L^{2}}^{2} \right) (t) = 0.
\end{align*}
Thus, one deduces from Gronwall's inequality that
\begin{align*}
& \left( \frac{\| \rho \|_{L^{2}}^{2}} { t } +   \| \sqrt{\rho_{1}} u \|_{L^{2}}^{2} + \| \sqrt{\rho_{1}} \phi\|_{L^{2}}^{2} + \| \phi_{x}\|_{L^{2}}^{2} \right)(t) + \int_{0}^{t} \left( \frac{\| \rho \|_{L^{2}}^{2}} {s^{2}} + \| u_{x}\|_{L^{2}}^{2} + \| \mu_{x}\|_{L^{2}}^{2} \right) {\rm d}s =0.
\end{align*}
which implies $\rho = u = \phi = 0$. As a consequence, the uniqueness of solutions is proved.
\end{proof}

\smallskip
{\bf Acknowledgment.}
Ding's work is supported by the Key Project of the National Natural Science Foundation of China (No.12131010), the National Natural Science Foundation of China (No.12271032) and Guangdong Basic and Applied Basic Research Foundation (No. 2026A1515010778). Li's work is supported by the National Natural Science Foundation of China (No.12371205) and the Natural Science Foundation of Guangdong Province (No.2025A151501202 6, 2025A1515040001). Yan's work is supported by the National Natural Science Foundation of China (No.12526617).

\end{document}